\documentclass[a4paper]{amsart}
\usepackage{enumerate, amsfonts, amssymb, amsthm, mathrsfs, mathabx, booktabs}

\usepackage{tikz}
\usetikzlibrary{shapes,decorations.pathmorphing,calc,arrows}

\usepackage[para]{footmisc}

\usepackage[colorlinks=true]{hyperref}
\numberwithin{equation}{section}
\usepackage{cleveref}

\newcommand{\Dfn}[1]{\emph{\color{blue}#1}} 
\newcommand{\FindStat}[1]{\url{www.findstat.org/#1}}
\newcommand{\OEIS}[1]{\url{www.oeis.org/#1}}

\newcommand{\coc}{d}
\newcommand{\coa}{d}

\newcommand{\ind}{\textbf{b}}

\numberwithin{equation}{section}
\theoremstyle{plain}
\newtheorem{lemma}[equation]{Lemma}
\newtheorem{theorem}[equation]{Theorem}

\newtheorem{corollary}[equation]{Corollary}
\newtheorem{proposition}[equation]{Proposition}

\theoremstyle{definition}
\newtheorem{definition}[equation]{Definition}
\newtheorem{remark}[equation]{Remark}

\newtheorem{example}[equation]{Example}

\crefname{proposition}{Prop.}{Props.}
\crefname{corollary}{Cor.}{Cors.}
\crefname{theorem}{Thm.}{Thms.}
\crefname{equation}{Eq.}{Eqs.}

\Crefname{proposition}{Proposition}{Propositions}
\Crefname{theorem}{Theorem}{Theorems}
\Crefname{corollary}{Corollary}{Corollaries}
\Crefname{equation}{Equation}{Equations}

\newcommand\Dyck{\mathcal{D}}    
\newcommand\DyckP{\mathcal{D}^{\mathrm 0}} 
\newcommand\DyckH{\mathcal{D}^{\mathrm r}} 
\newcommand{\LK}{{\rm LK}}    
\newcommand{\LKP}{{\rm LK}^0} 
\newcommand{\LKH}{{\rm LK}^r} 
\DeclareMathOperator{\BJS}{BJS}  

\DeclareMathOperator{\End}{End}%
\DeclareMathOperator{\Ext}{Ext}%
\DeclareMathOperator{\Hom}{Hom}%
\DeclareMathOperator{\pd}{pd}%
\DeclareMathOperator{\gd}{gd}%

\newcommand{\ZZ}{\mathbb{Z}}
\newcommand{\NN}{\mathbb{N}}
\newcommand{\FF}{\mathbb{F}}

\newcommand{\defeq}{:=}

\definecolor{lightgrey}{rgb}{0.7,0.7,0.7}

\makeatletter
\long\def\ifnodedefined#1#2#3{%
    \@ifundefined{pgf@sh@ns@#1}{#3}{#2}%
}
\makeatother

\newcommand{\ARquiver}[1]{
  \foreach \x\y\c\lab in {#1}{
    \coordinate (A\x-\y) at (\x,\y);
    \draw[fill=black] (A\x-\y) circle (.05);
    \pgfmathtruncatemacro{\xplusone} {\x+1}
    \pgfmathtruncatemacro{\xminusone}{\x-1}
    \pgfmathtruncatemacro{\yminusone}{\y-1}

    \node[anchor=north] at ($(A\x-\y)-(0,0.1)$) {\tiny$\lab$};
    \ifnodedefined{A\xplusone-\yminusone}{
      \draw[->,shorten <=7pt, shorten >=7pt] (A\x-\y) -- (A\xplusone-\yminusone);
    }{}
    \ifnodedefined{A\xminusone-\yminusone}{
      \draw[->,shorten <=7pt, shorten >=7pt] (A\xminusone-\yminusone) -- (A\x-\y);
    }{}
  }
}

\newcommand{\drawPath}[5]
{%
  \draw[rounded corners=1, line width=#5, #3] #4
  \foreach \dir in {#1}%
  {
    \ifnum\dir=#2
    -- ++(1,0)
    \else
    -- ++(0,1)
    \fi
  };}
\newcommand{\drawLK}[1]{%
  \foreach \x/\y/\xx/\yy in {#1}
  {
    \draw[line width=0.5, color=green, decorate, decoration={snake, segment length=5, amplitude=1pt}] (\y,\x) -- (\y,\xx) -- (\yy,\xx);
  };}
\newcommand{\drawPeaksValleys}[1]{%
  \foreach \y/\x in {#1}
  {
    \node at (\x-0.5,\y-0.5) {\huge$\times$};
  };}
\newcommand{\drawCycleDiagram}[3]{%
  \foreach \y/\x in {#1}
  {
    \draw[line width=0.5, color=red, decorate, decoration={snake, segment length=5, amplitude=1pt}] (\x-0.5,\x-0.5) -- (\x-0.5,\y-0.5) node[color=black] {\huge$\times$} -- (\y-0.5,\y-0.5);
  };
  \foreach \x in {#2}
  {
    \draw[line width=1.5] (\x-0.75,\x-0.25) -- (\x-0.75,\x-0.75) -- (\x-0.25,\x-0.75);
  }
  \foreach \x in {#3}
  {
    \node[line width=1.5, circle, draw] at (\x-0.5,\x-0.5) {};
  }
}
\newcommand{\drawRegular}[4]{%
  \foreach \x/\y in {#1}
  {
    \node[fill=red, circle] at (\y,\x) {};
  }
  \foreach \x/\y in {#2}
  {
    \node[fill=red, diamond] at (\y,\x) {};
  }
  \foreach \c [count=\n from -1] in {#4}{};
  \foreach \c [count=\x from 0] in {#3}
  {
    \node at (14.5, \n-\x+1.5) {$d_{\x}$};
    \node at (15.25, \x+0.5) {\c};
  };
  \foreach \c [count=\y from 0] in {#4}
  {
    \node at (\n-\y+0.5, -0.5) {$c_{\y}$};
    \node at (\n-\y+0.5, -1.25) {\c};
  };
}

\newcommand{\drawLabel}[4]{%
  \foreach \c [count=\n from -1] in {#2}{};
  \foreach \c [count=\y from  0] in {#2}
  {
    \node at (-1.25,\n-\y+0.5) {$#1_{\y}$};
    \node at (-0.50,\n-\y+0.5) {\c};
  };
  \foreach \c [count=\n from -1] in {#4}{};
  \foreach \c [count=\x from  0] in {#4}
  {
    \node at (\n-\x+1.5,\n+2.25) {$#3_{\x}$};
    \node at (\x+0.5,\n+1.5) {\c};
  }
}

\newcommand{\drawArea}[5]{%
  \foreach \c [count=\n from -1] in {#1}{};
  \foreach \c [count=\y from  0] in {#1}
  {
    \foreach \x in {1,...,\c}
    {
      \draw[#3, fill] (\n-\y-\x+2+#4,\n-\y+1+#5) circle (#2 pt);
    }
  };
}

\title[Classification of 2-regular simple modules for Nakayama algebras]{A combinatorial classification of \\ 2-regular simple modules for Nakayama algebras}

\keywords{Nakayama algebras, quiver representation theory, homological algebra, Dyck paths, bijective combinatorics, combinatorial statistics}

\author[R.~Marczinzik]{Ren{\'e} Marczinzik}%
\address[R.~Marczinzik]{Institute of algebra and number theory, University of Stuttgart, Germany}
\email{marczire@mathematik.uni-stuttgart.de}

\author[M.~Rubey]{Martin Rubey}%
\address[M.~Rubey]{Fakult\"at f\"ur Mathematik und Geoinformation, TU Wien,
  Austria}%
\email{martin.rubey@tuwien.ac.at}%
\thanks{M.R. was supported by the Austrian Science Fund (FWF): P 29275}

\author[C.~Stump]{Christian Stump}
\address[C.~Stump]{Fakult\"at f\"ur Mathematik, Ruhr-Universit\"at Bochum, Germany}
\email{christian.stump@rub.de}
\thanks{\quad C.S. was supported by DFG grants STU 563/2 ``Coxeter-Catalan combinatorics'' and STU 563/4-1 ``Noncrossing phenomena in Algebra and Geometry''.}

\hyphenation{Naka-yama LNaka-yama CNaka-yama Aus-lan-der Ku-pisch coKu-pisch semi-length}

\begin{document}

\begin{abstract}
  Enomoto showed for finite dimensional algebras that the classification of exact structures on the category of finitely generated projective modules can be reduced to the classification of $2$-regular simple modules.
  In this article, we give a combinatorial classification of $2$-regular simple modules for Nakayama algebras and we use this classification to answer several natural questions such as when there is a unique exact structure on the category of finitely generated projective modules for Nakayama algebras.
  We also classify $1$-regular simple modules, quasi-hereditary Nakayama algebras and Nakayama algebras of global dimension at most two.
  It turns out that most classes are enumerated by well-known combinatorial sequences, such as Fibonacci, Riordan and Narayana numbers.
  We first obtain interpretations in terms of the Auslander-Reiten quiver of the algebra using homological algebra, and then apply suitable bijections to relate these to combinatorial statistics on Dyck paths.
\end{abstract}

\maketitle

\section{Introduction}

A \Dfn{Nakayama algebra} is a finite-dimensional algebra over a field $\FF$, all whose indecomposable projective and indecomposable injective modules are uniserial.
The aim of this paper is to provide a dictionary between homological properties of Nakayama algebras and their modules, and combinatorial statistics on (possibly periodic) Dyck paths.
Our main results concern $1$- and $2$-regular simple modules.
By a result of Enomoto (\cite[Theorem 3.7]{En}) the classification of $2$-regular simple modules corresponds to the classification of exact structures on the category of finitely generated projective modules. In general the classification of exact structures on the category of finitely generated projective modules for a finite dimensional algebra is a hard problem and it seems there is no solution yet for a large class of algebras. For Nakayama algebras we use a combinatorial model via Dyck paths and explicit knowledge of the beginning of a minimal projective resolution of a simple module to obtain an elementary description of 2-regular simple modules and use this to give a first classification result for 2-regular simple modules for a large class of algebras. Nakayama algebras are one of the most basic classes of algebras in the representation theory of finite dimensional algebras and we hope that our work can be seen as a foundation for more general classification results for exact structures on the category of finitely generated projective modules for larger classes of algebras such as the recently introduced higher Nakayama algebras, see \cite{JK}.
We also mention that 2-regular simple modules can be used to construct Iwanaga-Gorenstein algebras of finite Cohen-Macaulay type, see \cite[Theorem A]{En}, which gives another motivation for the classification of 2-regular simple modules and equivalently exact structures on the category of finitely generated projective modules.
Several natural questions arise, such as:
\begin{enumerate}
  \item When does the category of finitely generated projective modules of an algebra have a unique exact structure?
  \item How many exact structures on the category of finitely generated projective modules can an algebra in a given class of algebras have at most?
\end{enumerate}
In this article we give a combinatorial description of $2$-regular simple modules for Nakayama algebras and use this to completely answer these two algebraic questions.
In addition, our results also exhibit an interesting interplay between the representation theory and homological algebra of Nakayama algebras on the one hand and combinatorial properties of Dyck paths on the other hand.

\medskip

Let~$Q$ be a finite quiver with path algebra $\FF Q$, and let~$J$ denote the ideal generated by the arrows in~$Q$.
Then a two sided ideal~$I$ is called \Dfn{admissible} if $J^m \subseteq I \subseteq J^2$ for some $m \geq 2$.
In this article we assume that all Nakayama algebras are given by a connected quiver and admissible relations.
Note that this assumption is no loss of generality for algebraically closed fields since every algebra is Morita equivalent to a quiver algebra in this case and all our notions are invariant under Morita equivalence.
Using this language, Nakayama algebras are precisely the algebras $\mathbb{F}Q/I$, such that~$I$ is admissible and~$Q$ is either a linear quiver
\[
\begin{tikzpicture}[scale=0.7]
  \foreach \pos\lab in {0/0, 2/1, 4/, 6/, 8/n-2, 10/n-1}{
    \coordinate (A\pos) at (\pos,0);
    \draw[fill=black] (A\pos) circle (.08);
    \node[anchor=north] at ($(A\pos)-(0,0.2)$) {$\lab$};
  }
  \foreach \sou\tar in {0/2, 2/4, 6/8, 8/10}{
    \draw[->,shorten <=7pt, shorten >=7pt] (A\sou) -- (A\tar);
  }
  \node at (5,0) {$\cdots$};
\end{tikzpicture}
\]
or a cyclic quiver
\[
\begin{tikzpicture}[scale=0.7]
  \foreach \pos\lab in {0/0, 2/1, 4/, 6/, 8/n-2, 10/n-1}{
    \coordinate (A\pos) at (\pos,0);
    \draw[fill=black] (A\pos) circle (.08);
    \node[anchor=north] at ($(A\pos)-(0,0.2)$) {$\lab$};
  }
  \foreach \sou\tar in {0/2, 2/4, 6/8, 8/10}{
    \draw[->,shorten <=7pt, shorten >=7pt] (A\sou) -- (A\tar);
  }
  \node at (5,0) {$\cdots$};
  \draw[->,shorten <=7pt, shorten >=7pt] (A10.north) to[out=150,in=30] (A0.north);
\end{tikzpicture}
\]
For textbook introductions to Nakayama algebras we refer for example to~\cite{ARS,AnFul,SkoYam}.
We write \Dfn{LNakayama algebra} for a Nakayama algebra with linear quiver and \Dfn{CNakayama algebra} for a Nakayama algebra with cyclic quiver.
We moreover write \Dfn{$n$-Nakayama algebra}, \Dfn{$n$-LNakayama}, and \Dfn{$n$-CNakayama} in the cases that the respective Nakayama algebra has~$n$ simple modules $S_0,\ldots,S_{n-1}$.
These are in one-to-one correspondence with the vertices of the quiver.

\medskip

In \Cref{sec:prelim} we provide identifications between $(n+1)$-LNakayama algebras and Dyck paths of semilength~$n$ (\Cref{prop:LNak2Dyck}) and between~$n$-CNakayama algebras and certain~$n$-periodic Dyck paths (\Cref{prop:CNak2Dyck}).

\Cref{sec:2-reg} contains the main results of this paper.
These are descriptions of $1$- and $2$-regular simple modules for Nakayama algebras in terms of classical Dyck path statistics (\Cref{thm:1-rises-2-hills} for LNakayama algebras and \Cref{thm:LKC} for CNakayama algebras).
In \Cref{sec:globaldim}, we classify simple modules in Nakayama algebras of global dimension at most two
  (\Cref{gldim2chara}) and Nakayama algebras of global dimension at most two that satisfy the restricted Gorenstein condition
  (\Cref{thm:resGor}).
As corollaries of these classification results, we also obtain explicit enumeration formulas in all considered situations as summarized in \Cref{tab:enum}.
\begin{figure}[t]
  \small
  \resizebox{\textwidth}{!}{
  \begin{tabular}[t]{lll}
    \toprule
    Restriction & Statement\\
    \midrule
    no $1$-regular simples & \cref{cor:no1reg} & (Riordan numbers) \\
    no $2$-regular simples & \cref{cor:no2reg} & (Dyck paths without $2$-hills) \\
    $k$ $1$-regular and $\ell$ $2$-regular simples & \cref{cor:k1regl2reg}\\
    $\ell$ simples of projective dimension $1$ & \cref{cor:pdim1} & (Narayana numbers) \\
    $\ell$ simples of projective dimension $2$ & \cref{cor:pdim2} & (Dyck paths with $\ell$ big returns) \\
    $k$ simples of projective dimension $1$ \\
    \hfil and $\ell$ simples of projective dimension $2$ & \cref{cor:pdim1pdim2GF} \\
    global dimension $2$ and $\ell$ simples of projective dimension $2$ & \cref{gldim2chara} & (subsets of cardinality $2\ell$) \\
    global dimension $2$ and restricted Gorenstein & \cref{cor:resGor} & (Fibonacci numbers) \\
    \midrule
    quasi-hereditary & \cref{cor:quasihercount} & (balanced non-constant binary necklaces) \\
    quasi-hereditary with a simple of dimension~$2$ & \cref{prop:quasiherminentrycount} \\
    quasi-hereditary without $1$-regular simples & \cref{cor:Cno1reg} & (periodic Dyck paths without $1$-rises) \\
    quasi-hereditary without $2$-regular simples & \cref{cor:Cno2reg} & (periodic Dyck paths without $2$-hills) \\
    quasi-hereditary with $\ell$ simples of projective dimension $1$ & \cref{cor:Cpdim1} & (periodic Dyck paths with $\ell$ peaks)\\
    quasi-hereditary with $\ell$ simples of projective dimension $2$ & \cref{cor:Cpdim2} & (periodic Dyck paths with $\ell$ big returns) \\
    global dimension $2$ and $\ell$ simples of projective dimension $2$ & \cref{gldim2chara} & (subsets of cardinality $2\ell$ up to rotation by pairs)\\
    global dimension $2$ and restricted Gorenstein & \cref{cor:resGorCNakayama} & (cyclic compositions of non-singleton parts) \\
    \bottomrule
  \end{tabular}
  }
  \caption{\label{tab:enum}Enumerative results for LNakayama algebras and for CNakayama algebras.}
\end{figure}

\medskip
The translation between Nakayama algebras and Dyck paths made it possible to search
\begin{itemize}
  \item the Online Encyclopedia of Integer Sequences~\cite{OEIS} for counting formulas for the homological properties, and
  \item the combinatorial statistic finder FindStat~\cite{FindStat} for combinatorial interpretations.
\end{itemize}
All major results, including the bijections involved, are based on these searches.
In particular, results found by FindStat suggested the main bijection employed, which is a variant of the Billey-Jockusch-Stanley bijection and the Lalanne-Kreweras involution.

\medskip

For the reader's convenience, we reference integer sequences in this article to the Online Encyclopedia of Integer Sequences~\cite{OEIS} and combinatorial bijections and statistics to FindStat~\cite{FindStat}.
We also provide all discussed homological properties for several small Nakayama algebras in \Cref{fig:Nakayama} for later reference.
Experiments were carried out using the GAP-package QPA~\cite{QPA} and SageMath~\cite{Sage}.

\begin{figure}[ht]
  \centering
  \resizebox{\textwidth}{!}{
    \begin{tabular}[t]{llllll}
      \toprule
      \small Kupisch series & \small $1$-reg. & \small $2$-reg. & \small pdim $1$ & \small pdim $2$ & \small gdim\\
      \midrule
      {}[1]&-&-&-&-&0\\
      \addlinespace
      {}[2, 1]&0&-&0&-&1\\
      \addlinespace
      {}[2, 2, 1]&-&0&1&0&2\\
      {}[3, 2, 1]&0,1&-&0,1&-&1\\
      \addlinespace
      {}[2, 2, 2, 1]&-&-&2&1&3\\
      {}[3, 2, 2, 1]&0&1&0,2&1&2\\
      {}[2, 3, 2, 1]&2&0&1,2&0&2\\
      {}[3, 3, 2, 1]&1&-&1,2&0&2\\
      {}[4, 3, 2, 1]&0,1,2&-&0,1,2&-&1\\
      \addlinespace
      {}[2, 2, 2, 2, 1]&-&-&3&2&4\\
      {}[3, 2, 2, 2, 1]&0&-&0,3&2&3\\
      {}[2, 3, 2, 2, 1]&-&0,2&1,3&0,2&2\\
      {}[3, 3, 2, 2, 1]&1&-&1,3&2&3\\
      {}[4, 3, 2, 2, 1]&0,1&2&0,1,3&2&2\\
      {}[2, 2, 3, 2, 1]&3&-&2,3&1&3\\
      {}[3, 2, 3, 2, 1]&0,3&1&0,2,3&1&2\\
      {}[2, 3, 3, 2, 1]&2&-&2,3&1&3\\
      {}[3, 3, 3, 2, 1]&-&-&2,3&1&3\\
      {}[4, 3, 3, 2, 1]&0,2&-&0,2,3&1&2\\
      {}[2, 4, 3, 2, 1]&2,3&0&1,2,3&0&2\\
      {}[3, 4, 3, 2, 1]&1,3&-&1,2,3&0&2\\
      {}[4, 4, 3, 2, 1]&1,2&-&1,2,3&0&2\\
      {}[5, 4, 3, 2, 1]&0,1,2,3&-&0,1,2,3&-&1\\
      \bottomrule
    \end{tabular}
    \quad
    \begin{tabular}[t]{llllll}
      \toprule
      \small Kupisch series & \small $1$-reg. & \small $2$-reg. & \small pdim $1$ & \small pdim $2$ & \small gdim\\
      \midrule
      {}[3, 2]&-&1&0&1&2\\
      \addlinespace
      {}[2, 3, 2]&-&-&1&0&3\\
      {}[4, 3, 2]&1&2&0,1&2&2\\
      {}[5, 4, 3]&0&-&0,1&2&2\\
      \addlinespace
      {}[2, 2, 3, 2]&-&-&2&1&4\\
      {}[2, 4, 3, 2]&2&-&1,2&0&3\\
      {}[3, 2, 3, 2]&-&1,3&0,2&1,3&2\\
      {}[3, 4, 3, 2]&1&-&1,2&0&3\\
      {}[4, 3, 3, 2]&2&-&0,2&3&3\\
      {}[5, 4, 3, 2]&1,2&3&0,1,2&3&2\\
      {}[3, 5, 4, 3]&-&-&1,2&0&3\\
      {}[6, 5, 4, 3]&0,2&-&0,1,2&3&2\\
      {}[7, 6, 5, 4]&0,1&-&0,1,2&3&2\\
      \addlinespace
      {}[2, 2, 2, 3, 2]&-&-&3&2&5\\
      {}[2, 2, 4, 3, 2]&3&-&2,3&1&4\\
      {}[2, 3, 2, 3, 2]&-&2&1,3&0,2&3\\
      {}[2, 3, 4, 3, 2]&2&-&2,3&1&4\\
      {}[2, 4, 3, 3, 2]&3&-&1,3&0&4\\
      {}[2, 5, 4, 3, 2]&2,3&-&1,2,3&0&3\\
      {}[3, 2, 3, 3, 2]&3&-&0,3&4&4\\
      {}[3, 2, 4, 3, 2]&3&1,4&0,2,3&1,4&2\\
      {}[3, 3, 4, 3, 2]&-&-&2,3&1&4\\
      {}[3, 5, 4, 3, 2]&1,3&-&1,2,3&0&3\\
      {}[4, 3, 3, 3, 2]&-&-&0,3&4&4\\
      {}[4, 3, 4, 3, 2]&2&4&0,2,3&1,4&2\\
      {}[4, 5, 4, 3, 2]&1,2&-&1,2,3&0&3\\
      {}[5, 4, 3, 3, 2]&1,3&-&0,1,3&4&3\\
      {}[5, 4, 4, 3, 2]&2,3&-&0,2,3&4&3\\
      {}[6, 5, 4, 3, 2]&1,2,3&4&0,1,2,3&4&2\\
      {}[3, 3, 5, 4, 3]&-&-&2,3&1&3\\
      {}[3, 6, 5, 4, 3]&3&-&1,2,3&0&3\\
      {}[4, 3, 5, 4, 3]&0,2&-&0,2,3&1&3\\
      {}[4, 6, 5, 4, 3]&2&-&1,2,3&0&3\\
      {}[6, 5, 4, 4, 3]&3&-&0,1,3&4&3\\
      {}[7, 6, 5, 4, 3]&0,2,3&-&0,1,2,3&4&2\\
      {}[4, 7, 6, 5, 4]&1&-&1,2,3&0&3\\
      {}[8, 7, 6, 5, 4]&0,1,3&-&0,1,2,3&4&2\\
      {}[9, 8, 7, 6, 5]&0,1,2&-&0,1,2,3&4&2\\
      \bottomrule
    \end{tabular}
  }
  \caption{\label{fig:Nakayama}Some properties of small LNakayama algebras (left) and of small quasi-hereditary CNakayama algebras (right)}
\end{figure}

\section{Preliminaries}
\label{sec:prelim}

Let~$A$ be an~$n$-Nakayama algebra and let~$e_i$ denote the idempotent corresponding to vertex~$i$ in the corresponding quiver.
The \Dfn{Kupisch series} of~$A$ is the sequence $[c_0,c_1,\dots,c_{n-1}]$, where $c_i \geq 1$ denotes the vector space dimension of the indecomposable projective module $e_iA$.
This sequence uniquely determines the algebra up to isomorphism, see for example~\cite[Theorem 32.9]{AnFul}.
For~$n$-CNakayama algebras we extend the Kupisch series cyclically via $c_i = c_j$ for $i,j \in \ZZ$ with $i \equiv j \mod n$.
Two Kupisch series give isomorphic CNakayama algebras if and only if they coincide up to cyclic rotation, corresponding to the cyclic rotation of the vertices of the quiver.
\medskip

The following identification of Nakayama algebras and Kupisch series is classical and can be found, for example, in~\cite[Chapter~32]{AnFul}.
We repeat it here for the convenience of the more combinatorially inclined reader and to fix notation.
First observe that the Kupisch series $[c_0,\dots,c_{n-1}]$ of an~$n$-Nakayama algebra~$A$ satisfies
\begin{equation}
\label{eq:relative}
  \begin{aligned}
    c_{i+1}+1 \geq c_i&\text{ for all } 0 \leq i < n, \\
          c_i \geq 2\hspace*{2pt} &\text{ for all } 0 \leq i < n-1,
  \end{aligned}
\end{equation}
with indices considered cyclically.
Moreover,~$A$ is an LNakayama algebra if and only if
\begin{equation}
\label{eq:Lboundary}
  c_{n-1}=1.
\end{equation}
A module over a quiver algebra has vector space dimension~$1$ if and only if it is simple, so the latter means that the projective module $e_{n-1}A$ is simple.  Equivalently, the vertex $n-1$ in the quiver has no outgoing arrow.
Together with~\Cref{eq:relative} this forces $c_{n-2} = 2$ for LNakayama algebras.
Otherwise, i.e., if
\begin{equation}
  \label{eq:Cboundary}
  c_{n-1} \geq 2,
\end{equation}
the Nakayama algebra~$A$ is a CNakayama algebra.
Note that~\Cref{eq:relative} forces $c_{n-1} \leq c_0 + 1$ in this case.

\medskip

In total we obtain the following identification.
Here and below, we use the term \Dfn{necklace of length~$n$} for a sequence $[a_0,\dots,a_{n-1}]$ of length~$n$ up to cyclic rotation and write $[a_0,\dots,a_{n-1}]_\circlearrowright$ in this case.

\begin{proposition}
\label{prop:KupischBijection}
  Sending an~$n$-Nakayama algebra to its Kupisch series is a bijection between~$n$-Nakayama algebras and \emph{necklaces} of length~$n$ satisfying~\Cref{eq:relative}.
  It moreover restricts to bijections between
  \begin{enumerate}
    \item\label{it:LKupisch}~$n$-LNakayama algebras and \emph{sequences} of length~$n$ satisfying~\Cref{eq:relative,eq:Lboundary}, and between
    \item\label{it:CKupisch}~$n$-CNakayama algebras and \emph{necklaces} of length~$n$ satisfying~\Cref{eq:relative,eq:Cboundary}.
  \end{enumerate}
\end{proposition}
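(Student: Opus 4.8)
The plan is to deduce everything from the classical fact, recalled above and in~\cite[Theorem 32.9]{AnFul}, that the Kupisch series is a complete isomorphism invariant of a connected Nakayama algebra, together with the observations already made that it satisfies~\Cref{eq:relative} and that the boundary value of~$c_{n-1}$ distinguishes the two quiver shapes via~\Cref{eq:Lboundary,eq:Cboundary}. Thus the assignment $A \mapsto [c_0,\dots,c_{n-1}]$ is well-defined into sequences satisfying~\Cref{eq:relative}, and the task is to pass to the correct equivalence (rotation classes, i.e.\ necklaces) and to match this with isomorphism of algebras. I would first establish the two restrictions, which are cleaner, and then assemble the main statement from them.

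For part~\eqref{it:LKupisch}, an LNakayama algebra has a linear quiver with a distinguished source~$0$ and sink~$n-1$, so its vertices carry no nontrivial rotational symmetry; hence the Kupisch series is a genuine iso-invariant \emph{sequence}, it satisfies~\Cref{eq:relative,eq:Lboundary}, and by the cited classical result distinct LNakayama algebras give distinct such sequences. Conversely, any sequence satisfying~\Cref{eq:relative,eq:Lboundary} is realized, since these are exactly the admissibility data for an algebra on the linear quiver whose indecomposable projectives are uniserial of the prescribed dimensions; this is the content of the classical correspondence. For part~\eqref{it:CKupisch} the argument is identical, except that the cyclic quiver \emph{does} admit the rotations permuting its vertices, so by the remark recorded before~\Cref{prop:KupischBijection} two Kupisch series yield isomorphic CNakayama algebras exactly when they agree up to cyclic rotation; hence the invariant is a \emph{necklace}, and the realizable ones are precisely those satisfying~\Cref{eq:relative,eq:Cboundary} (where~\Cref{eq:relative} then automatically forces $c_{n-1}\le c_0+1$).

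It remains to glue these into the main statement, i.e.\ to identify the necklaces of length~$n$ satisfying~\Cref{eq:relative} with the disjoint union of the two target sets. Here I would declare that a necklace satisfies~\Cref{eq:relative} when some cyclic representative does, and then use the key dichotomy: the second line of~\Cref{eq:relative} permits at most one entry to equal~$1$, so a qualifying necklace has either no entry equal to~$1$ or exactly one. In the former case every representative satisfies~\Cref{eq:relative} and in particular~\Cref{eq:Cboundary}, placing the necklace in family~\eqref{it:CKupisch}. In the latter case the single entry~$1$ can only occupy position~$n-1$ under~\Cref{eq:relative}, so there is a \emph{unique} representative satisfying~\Cref{eq:relative}, it satisfies~\Cref{eq:Lboundary}, and this canonical representative identifies the necklace bijectively with an LNakayama sequence from~\eqref{it:LKupisch}. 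As these two cases are mutually exclusive and exhaustive, the map $A\mapsto[c_0,\dots,c_{n-1}]_\circlearrowright$ is a bijection onto all such necklaces, restricting exactly as claimed.

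The step I expect to demand the most care is precisely this last bookkeeping: reconciling the global target, taken up to rotation (necklaces), with the LNakayama target in~\eqref{it:LKupisch}, taken \emph{without} rotation (sequences). The reconciliation rests on the fact that an LNakayama necklace has a distinguished representative, the one ending in~$1$, and that this representative is unique because the single~$1$ breaks all rotational symmetry; verifying that this identification is compatible with isomorphism of algebras, where the linear-quiver case genuinely has no rotational freedom unlike the cyclic case, is the one point at which the two quiver geometries must be treated separately rather than uniformly.
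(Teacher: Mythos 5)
Your proposal is correct and follows essentially the same route as the paper, which states this proposition without a separate proof as a summary of the classical facts recalled just before it (the Kupisch series as a complete isomorphism invariant from \cite[Chapter~32]{AnFul}, the constraints \Cref{eq:relative,eq:Lboundary,eq:Cboundary}, and the observation that cyclic rotation of the Kupisch series corresponds to isomorphism of CNakayama algebras). Your additional bookkeeping --- that a qualifying necklace contains at most one entry equal to~$1$, which then must sit at position $n-1$ and breaks all rotational symmetry --- is exactly the right way to reconcile the sequence target in~\eqref{it:LKupisch} with the necklace target of the main statement.
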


\begin{remark}
  It is well known that a Nakayama algebra is selfinjective if and only if it is a CNakayama algebra with constant Kupisch series, see for example~\cite[Theorem 6.15 (Chapter IV)]{SkoYam}.
  Over a selfinjective algebra every module is either projective or of infinite projective dimension.
\end{remark}

Let~$A$ be an~$n$-Nakayama algebra with Kupisch series $[c_0,\dots,c_{n-1}]$.
The \Dfn{coKupisch series} is the sequence $[\coc_0, \dots, \coc_{n-1}]$, where $\coc_{i}$ is the vector space dimension of the indecomposable injective module~$D(Ae_i)$ where $D\defeq\Hom_{\FF}(-, \FF)$ denotes the standard duality of a finite-dimensional algebra.
Equivalently, $\coc_i$ is the vector space dimension of the indecomposable projective left module $Ae_i$.
For~$n$-CNakayama algebras we extend the coKupisch series cyclically such that $d_i = d_j$ for $i,j\in\ZZ$ with $i \equiv j$ modulo~$n$ .

The Kupisch and coKupisch series are related by
\begin{equation}
\label{eq:KupischCoKupisch}
  \coc_{i}=\min\big\{k \mid k \geq c_{i-k} \big\},
\end{equation}
see~\cite[Theorem~2.2]{Ful}.
In particular, this implies $\{ c_0,\dots,c_{n-1} \} = \{ \coc_0,\dots,\coc_{n-1} \}$ as multisets.
A sequence is the coKupisch series of an~$n$-Nakayama algebra if and only if the reverse sequence is a Kupisch series.
Let~$A$ and $B$ be~$n$-Nakayama algebras such that the Kupisch series of~$A$ coincides with the reversed coKupisch series of~$B$.
Then also the coKupisch series of~$A$ coincides with the reversed Kupisch series of~$B$.
In particular, interchanging the Kupisch and the reversed coKupisch series is an involution on~$n$-Nakayama algebras.
It is given by mapping an~$n$-Nakayama algebra to its opposite algebra.

\subsection{Nakayama algebras and Dyck paths}
\label{sec:Nak2Dyck}

The \Dfn{Auslander-Reiten quiver} of a repre\-sentation-finite quiver algebra is the quiver with vertices corresponding to the indecomposable modules of the algebra and arrows correspond to the irreducible maps between the indecomposable modules.
We refer for example to~\cite[Chapter~III]{SkoYam} for a detailed introduction to Auslander-Reiten theory.

Nakayama algebras are representation-finite and it is well known that every indecomposable module of an~$n$-Nakayama algebra~$A$ with Kupisch series $[c_0,\dots,c_{n-1}]$ is given up to isomorphism by $\ind_{i,k} \defeq e_i A/e_i J^k$ where~$J$ denotes the Jacobson radical, $i \in \{0,1,\dots,n-1 \}$ and $k \in \{1,2,\dots,c_i \}$.
Note that $\dim \ind_{i,k} = k$.
We identify $\ind_{i, c_i}$ with $e_i A$, which are exactly the indecomposable projective modules, and $\ind_{i+1-d_i, d_i}$ with $D(A e_i)$, which are exactly the indecomposable injective modules.
The modules $S_i = \ind_{i, 1}$ are exactly the simple modules. For~$n$-CNakayama algebras we regard the indices~$i$ of the modules $\ind_{i,k}$ and $S_i$ modulo~$n$, so that they are defined for all $i \in \mathbb{Z}$.

The Auslander-Reiten quiver of an~$n$-Nakayama algebra with Kupisch series given by $[c_0,\dots,c_{n-1}]$ has vertices $\ind_{i,k}$ with $0 \leq i < n$ and $1 \leq k \leq c_i$ and all possible arrows of the form
\[
  \begin{tikzpicture}
    \ARquiver{
        1/0/0/\hspace*{45pt}{\begin{array}{c}\ind_{i,k-1} \\ (i{,}\ i+k-2) \end{array}},
        0/1/0/{\begin{array}{c}\ind_{i,k} \\ (i{,}\ i+k-1) \end{array}}\hspace*{15pt},
        1/2/0/\hspace*{45pt}{\begin{array}{c}\ind_{i-1,k+1} \\ (i-1{,}\ i+k-1) \end{array}}
    }
  \end{tikzpicture}
\]
see, for example,~\cite[Theorem~8.7 (Chapter~III)]{SkoYam}.  Note that exactly the maps $\ind_{i,k}\to\ind_{i-1,k+1}$ are injective, and exactly the maps $\ind_{i,k}\to\ind_{i,k-1}$ are surjective.

\begin{proposition}
\label{indinjnak}
  Let~$A$ be a Nakayama algebra with Kupisch series $[c_0,\dots,c_{n-1}]$.
  The indecomposable module $\ind_{i,m}$ is injective if and only if $c_{i-1} \leq m$.
  In particular, $\ind_{i,c_i}$ is injective if and only if $c_{i-1} \leq c_i$ and dually $D(Ae_i)$ is projective if and only if $d_{i} \geq d_{i+1}$.
\end{proposition}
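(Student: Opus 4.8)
The plan is to translate both the hypothesis and the conclusion into statements about the coKupisch series and then read them off from the defining relation~\Cref{eq:KupischCoKupisch}. Recall that the indecomposable injective modules are exactly the modules $D(Ae_j)=\ind_{j+1-d_j,d_j}$, and that an indecomposable $\ind_{a,b}$ is determined up to isomorphism by its top vertex $a$ (modulo~$n$) together with its dimension~$b$. Matching $\ind_{i,m}$ against $\ind_{j+1-d_j,d_j}$ forces $m=d_j$ and $i\equiv j+1-m$, hence $j\equiv i+m-1$; so $\ind_{i,m}$ is injective exactly when $d_{i+m-1}=m$, and the first claim reduces to showing that $d_{i+m-1}=m$ if and only if $c_{i-1}\le m$, where throughout $1\le m\le c_i$ since $\ind_{i,m}$ is a module.

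To prove this equivalence I would expand $d_{i+m-1}=\min\{k: k\ge c_{i+m-1-k}\}$ using~\Cref{eq:KupischCoKupisch}. The forward direction is immediate: taking $k=d_{i+m-1}=m$ gives $m\ge c_{(i+m-1)-m}=c_{i-1}$. For the reverse direction, assume $c_{i-1}\le m\le c_i$. Then $k=m$ already satisfies $m\ge c_{i-1}$, so $d_{i+m-1}\le m$, and it remains to exclude any witness $k=m-t$ with $1\le t\le m-1$; such a witness would mean $c_{i-1+t}\le m-t$. The main point is to contradict this using~\Cref{eq:relative}: summing the inequalities $c_r\le c_{r+1}+1$ yields $c_a\le c_{a+s}+s$ for all $s\ge 0$, and with $a=i$, $s=t-1$ this gives $c_{i-1+t}\ge c_i-(t-1)\ge m-t+1>m-t$, the desired contradiction. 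Hence $d_{i+m-1}=m$. This telescoping step, together with the constraint $m\le c_i$, is where the actual content lies, and is the step I expect to be the main obstacle.

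The two displayed consequences then follow quickly. Setting $m=c_i$ in the equivalence just proved gives that $\ind_{i,c_i}$ is injective if and only if $c_{i-1}\le c_i$. For the dual statement I would argue directly: $D(Ae_i)=\ind_{i+1-d_i,d_i}$ is projective precisely when its dimension equals the Kupisch entry at its top vertex, that is, when $d_i=c_{i+1-d_i}$. Here $c_{i+1-d_i}\ge d_i$ always holds, for otherwise $c_{i+1-d_i}\le d_i-1$ would make $k=d_i-1$ a witness for $d_i$ in~\Cref{eq:KupischCoKupisch} (note $i-(d_i-1)=i+1-d_i$), contradicting minimality. Thus $D(Ae_i)$ is projective if and only if $c_{i+1-d_i}\le d_i$, and a last application of the telescoping inequality $c_a\le c_{a+s}+s$ shows that $c_{i+1-d_i}\le d_i$ is equivalent to $d_{i+1}\le d_i$: if $d_{i+1}\le d_i$ then the witness $k=d_{i+1}$ with $k\ge c_{i+1-k}$ telescopes up to $c_{i+1-d_i}\le d_i$, while conversely $c_{i+1-d_i}\le d_i$ makes $k=d_i$ such a witness, giving $d_{i+1}\le d_i$. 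This yields that $D(Ae_i)$ is projective if and only if $d_i\ge d_{i+1}$.
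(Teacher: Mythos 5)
Your argument is correct, but it is a genuinely different route from the paper's: the paper does not prove \Cref{indinjnak} at all, it simply cites \cite[Theorem~32.6]{AnFul}. What you have written is a self-contained combinatorial derivation from the paper's own stated facts, namely the parametrization of indecomposable injectives as $D(Ae_j)=\ind_{j+1-d_j,d_j}$, the relation \Cref{eq:KupischCoKupisch} (which the paper in turn cites from Fuller), and the telescoped form $c_a\le c_{a+s}+s$ of \Cref{eq:relative}. I checked the two nontrivial steps: the exclusion of smaller witnesses $k=m-t$ via $c_{i-1+t}\ge c_i-(t-1)\ge m-t+1$ is sound (and uses $m\le c_i$ exactly where you say it does), and the dual statement correctly combines the automatic inequality $c_{i+1-d_i}\ge d_i$ (trivially true when $d_i=1$, by minimality otherwise) with the two witness arguments for $d_{i+1}\le d_i$. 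The index conventions also work out, including the LNakayama boundary case $i=0$ where $c_{-1}=c_{n-1}=1$ makes every $\ind_{0,m}$ injective, as it should be. What your approach buys is independence from the textbook reference and a proof that lives entirely inside the Kupisch/coKupisch calculus the paper sets up; what it costs is that it leans on \Cref{eq:KupischCoKupisch}, which is itself only cited, so the citation has merely been pushed one level down rather than eliminated.
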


\begin{proof}
See for example~\cite[Theorem~32.6]{AnFul}.
\end{proof}

We denote by $\tau(\ind_{i,k}) \defeq \ind_{i+1,k}$ the \Dfn{Auslander-Reiten translate} of a non-projective indecomposable module~$\ind_{i,k}$.
In particular, $\tau(S_r) = S_{r+1}$ for non-projective $S_r$, see~\cite[Proposition~2.11 (Chapter~IV)]{ARS}.

As usual, we draw the Auslander-Reiten quiver such that all arrows go from left to right diagonally up or down.
To refer to indecomposable modules in the Auslander-Reiten quiver of a Nakayama algebra it will be convenient to define the \Dfn{coordinates} of $\ind_{i,j}$ to be $(i,i+j-1)$.
\begin{figure}
  \[
  \begin{tikzpicture}
    \def\textcolor#1{}
    \ARquiver{0/0/1/{\begin{array}{c} \textcolor{red}{\ind_{5,1}} \\ 55 \end{array}},
              2/0/0/{\begin{array}{c} \ind_{4,1} \\ 44 \end{array}},
              4/0/0/{\begin{array}{c} \ind_{3,1} \\ 33 \end{array}},
              6/0/0/{\begin{array}{c} \ind_{2,1} \\ 22 \end{array}},
              8/0/0/{\begin{array}{c} \ind_{1,1} \\ 11 \end{array}},
             10/0/2/{\begin{array}{c} \textcolor{blue}{\ind_{0,1}} \\ 00 \end{array}},
              1/1/1/{\begin{array}{c} \textcolor{red}{\ind_{4,2}} \\ 45 \end{array}},
              3/1/2/{\begin{array}{c} \textcolor{blue}{\ind_{3,2}} \\ 34 \end{array}},
              5/1/1/{\begin{array}{c} \textcolor{red}{\ind_{2,2}} \\ 23 \end{array}},
              7/1/0/{\begin{array}{c} \ind_{1,2} \\ 12 \end{array}},
              9/1/2/{\begin{array}{c} \textcolor{blue}{\ind_{0,2}} \\ 01 \end{array}},
              2/2/3/{\begin{array}{c} \textcolor{green}{\ind_{3,3}} \\ 35 \end{array}},
              6/2/1/{\begin{array}{c} \textcolor{red}{\ind_{1,3}} \\ 13 \end{array}},
              8/2/2/{\begin{array}{c} \textcolor{blue}{\ind_{0,3}} \\ 02 \end{array}},
              7/3/3/{\begin{array}{c} \textcolor{green}{\ind_{0,4}} \\ 03 \end{array}}}
  \end{tikzpicture}
  \]
  \medskip
  \[
  \resizebox{.95\textwidth}{!}{
    \begin{tikzpicture}
      \ARquiver{0/0/0/,2/0/0/,4/0/0/,6/0/0/,8/0/0/,10/0/0/,12/0/0/,14/0/0/,16/0/0/,18/0/0/,20/0/0/,22/0/0/,24/0/0/,26/0/0/,28/0/0/,
                            1/1/0/,3/1/0/,5/1/0/,7/1/0/,9/1/0/,11/1/0/,13/1/0/,15/1/0/,17/1/0/,19/1/0/,21/1/0/,23/1/0/,25/1/0/,27/1/0/,
                            4/2/0/,6/2/0/,8/2/0/,10/2/0/,12/2/0/,14/2/0/,16/2/0/,18/2/0/,20/2/0/,22/2/0/,26/2/0/,
                            5/3/0/,7/3/0/,9/3/0/,11/3/0/,13/3/0/,15/3/0/,17/3/0/,21/3/0/,
                            6/4/0/,10/4/0/,14/4/0/,16/4/0/}
    \end{tikzpicture}
  }
  \]
  \caption{\label{fig:ARexample}The Auslander-Reiten quiver of the Nakayama algebras with Kupisch series $[4,3,2,3,2,1]$ and $[3,2,4,3,5,5,4,5,4,5,4,3,2,2,1]$ and with coKupisch series $[1,2,3,4,2,3]$ and $[1,2,3,2,3,4,3,4,5,5,4,5,4,5,2]$. Modules without incoming arrow from the top left are projective, modules without outgoing arrow to the top right are injective.}
\end{figure}

Given a Nakayama algebra with Kupisch series $[c_0,\dots,c_{n-1}]$ and with coKupisch series $[\coc_0,\dots,\coc_{n-1}]$, these coordinates have the property that the number of vertices with $x$-coordinate~$i$ is given by~$c_i$ and the number of vertices with $y$-coordinate~$j$ is given by~$\coc_j$.
\Cref{fig:ARexample} shows two examples.

\subsubsection{LNakayama algebras and Dyck paths}

Sending an LNakayama algebra to the ``top boundary of its Auslander-Reiten quiver defines a bijection between LNakayama algebras and Dyck paths as follows.
We choose a coordinate system for the $\ZZ^2$-grid by having the \Dfn{horizontal step} $(0,1)$ point left and the \Dfn{vertical step} $(1,0)$ point down.
We identify a square in the $\ZZ^2$-grid with its top-left corner coordinates $(i,j)$.
This is, the square with top-left corner $(i,j)$, top-right corner $(i,j-1)$, bottom-left corner $(i+1,j)$ and bottom-right corner $(i+1,j-1)$ is identified with $(i,j)$.

\medskip

A \Dfn{Dyck path} of semilength~$n$ is a path from $(0,0)$ to $(n,n)$ consisting of vertical and horizontal steps that never goes below the main diagonal $x=y$.
Denote by $\Dyck_n$ the collection of all Dyck paths of semilength~$n$.
In the following we use two \emph{slightly shifted} variants of the area sequence associated with a Dyck path $D \in \Dyck_n$: the \Dfn{area sequence} $[c_0,c_1,\dots,c_n]$ is obtained by setting $c_k$, for $0 \leq k \leq n$, to the number of lattice points with $x$-coordinate~$k$ in the region enclosed by the path and the main diagonal.
Recall that we have identified a square with its top-left corner.
For example, the area sequence of the Dyck path in \Cref{fig:Dyck} is
\[
  [3,2,4,3,5,5,4,5,4,5,4,3,2,2,1].
\]
Similarly, the \Dfn{coarea sequence} $[\coa_0,\ldots,\coa_n]$ is obtained by setting $\coa_k$, for $0 \leq k \leq n$, to the number of lattice points with $y$-coordinate~$k$ in the region enclosed by the path and the main diagonal.
In the example in \Cref{fig:Dyck}, the coarea sequence is
\[
  [1,2,3,2,3,4,3,4,5,5,4,5,4,5,2].
\]
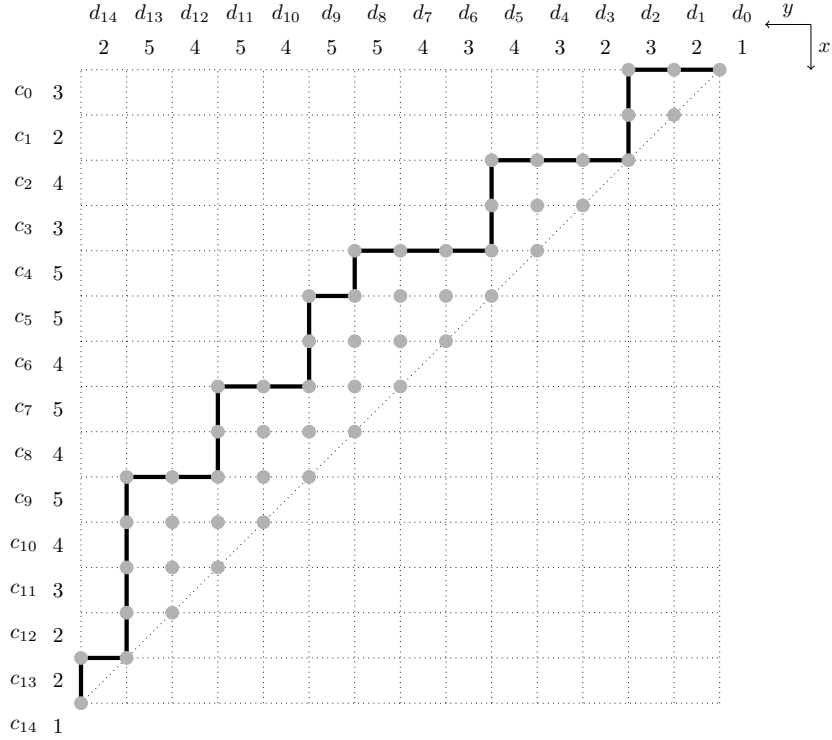
\begin{figure}[t]
  \centering
  \scalebox{0.8}{
    \begin{tikzpicture}[scale=0.75]
      \draw[dotted] (0, 0) grid (14, 14);
      \draw[dotted] (0,  0) --   (14, 14);
      \draw[->] (16, 15) -- node[above] {$y$} (15,15);
      \draw[->] (16, 15) -- node[right] {$x$} (16,14);
      \drawPath%
      {1, 0, 1, 1, 1, 1, 0, 0, 1, 1, 0, 0, 1, 1, 0, 1, 0, 0, 0, 1, 1, 0, 0, 0, 1, 1, 0, 0}{0}{black}{(0,0)}{2}
      \drawArea{3,2,4,3,5,5,4,5,4,5,4,3,2,2,1}{4}{lightgrey}{0}{0}
      \drawLabel%
      {c}      {3,2,4,3,5,5,4,5,4,5,4,3,2,2,1} 
      {\coa}   {2,5,4,5,4,5,5,4,3,4,3,2,3,2,1} 
    \end{tikzpicture}
  }
  \caption{The Dyck path of semilength $14$ corresponding to the Auslander-Reiten quiver in the bottom example in \Cref{fig:ARexample}.}
  \label{fig:Dyck}
\end{figure}
Sending a Dyck path $D \in \Dyck_n$ to its area sequence is obviously a bijection between $\Dyck_n$ and sequences $[c_0,\ldots,c_n]$ satisfying Conditions \eqref{eq:relative} and~\eqref{eq:Lboundary}.
As seen in \Cref{prop:KupischBijection}, these are exactly the same conditions as those for Kupisch series of $(n+1)$-LNakayama algebras.
This observation yields the following formalization of the pictorially indicated bijection between LNakayama algebras and Dyck paths given by sending an algebra to the top boundary of its Auslander-Reiten quiver.

\begin{proposition}
  \label{prop:LNak2Dyck}
  The map sending an $(n+1)$-LNakayama algebra~$A$ to the unique Dyck path~$D$ of semilength~$n$ such that the Kupisch series of~$A$ coincides with the area sequence of~$D$ is a bijection between $(n+1)$-LNakayama algebras and Dyck paths of semilength~$n$.
\end{proposition}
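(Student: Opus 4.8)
The plan is to exhibit the claimed map as a composition of two bijections that meet in a common intermediate set. Let $\mathcal{S}_n$ denote the collection of length-$(n+1)$ integer sequences $[c_0,\dots,c_n]$ satisfying Conditions~\eqref{eq:relative} and~\eqref{eq:Lboundary}; equivalently, each entry except the last is at least~$2$, the last entry is~$1$, and consecutive entries satisfy $c_{i+1}+1\geq c_i$. I would prove that both ``an $(n+1)$-LNakayama algebra $\mapsto$ its Kupisch series'' and ``a Dyck path in $\Dyck_n \mapsto$ its area sequence'' are bijections onto $\mathcal{S}_n$, and then compose the first with the inverse of the second.

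The first leg is immediate from the restriction of \Cref{prop:KupischBijection} to the linear case, which already identifies $(n+1)$-LNakayama algebras with the sequences in $\mathcal{S}_n$ via the Kupisch series. For the second leg I would use the observation recorded just before the statement: sending $D\in\Dyck_n$ to its area sequence is a bijection onto $\mathcal{S}_n$. To justify this carefully I would check injectivity --- a Dyck path is reconstructed from the number of enclosed lattice points in each column --- and that the image is exactly $\mathcal{S}_n$. The latter is where the genuine combinatorial content lies: staying weakly above the diagonal forces each non-final column of the enclosed region to contain at least two lattice points and the final column to contain exactly one, which gives the bounds $c_i\geq 2$ for $i<n$ and $c_n=1$; and moving from column $i$ to column $i+1$ the region can lose at most one lattice point, which gives $c_{i+1}+1\geq c_i$. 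Conversely, any sequence in $\mathcal{S}_n$ is realised by a unique path, so the map is onto. I expect this verification, rather than the composition itself, to be the only real work, and even it is routine once the slightly shifted area convention is pinned down.

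Composing the two bijections yields a bijection between $(n+1)$-LNakayama algebras and $\Dyck_n$ which, by construction, sends $A$ to the path whose area sequence is the Kupisch series of $A$; uniqueness of this path is exactly the injectivity of the area-sequence map. The one point demanding care is bookkeeping the indices when passing from the length-$(n+1)$ description of $\mathcal{S}_n$ to the semilength-$n$ Dyck paths, in particular confirming that the boundary condition~\eqref{eq:Lboundary} on the last entry matches the final column of the path carrying a single lattice point.
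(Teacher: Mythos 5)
Your proposal is correct and follows exactly the route the paper takes: it identifies both the Kupisch series map (via \Cref{prop:KupischBijection}) and the area-sequence map as bijections onto the sequences satisfying \Cref{eq:relative,eq:Lboundary} and composes them, the only difference being that you spell out the verification that the area-sequence map is a bijection, which the paper dismisses as obvious. No gaps.
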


This connection has already appeared in the literature, see for example~\cite[page~256]{Rin} for (a variant of) this bijection.
We moreover observe that \Cref{eq:KupischCoKupisch} implies that the coarea sequence of~$D$ also equals the coKupisch series of~$A$.

\subsubsection{CNakayama algebras and periodic Dyck paths}

Replacing the initial condition in \Cref{eq:Lboundary} for LNakayama algebras with the initial condition in \Cref{eq:Cboundary} for CNakayama algebras we obtain a description of these in terms of periodic Dyck paths.

\medskip

A \Dfn{balanced binary~$n$-necklace} is a binary necklace consisting of~$n$ white and~$n$ black beads.
In the above language, we represent a white bead by the letter~$v$ and the black bead by the letter~$h$, so that a balanced binary~$n$-necklace is a sequence of~$n$ letters~$v$ and~$h$ each, considered up to cyclic rotation.
Formally, an \Dfn{$n$-periodic Dyck path} is a balanced binary~$n$-necklace together with an integer $c \geq 0$; we refer to this integer as its \Dfn{global shift}.
This corresponds to an actual path in the $\ZZ^2$-grid up to diagonal translations together with an explicit choice of a diagonal as follows.
One draws a bi-infinite path given by the the balanced binary~$n$-necklace where white beads represent vertical steps and black beads represent horizontal steps.
This path is chosen so that it stays weakly but not strictly above the diagonal $y = x+c$, and two paths are identified if they coincide up to diagonal translation.

\medskip

The \Dfn{area sequence} of an~$n$-periodic Dyck path is the necklace $[c_0,\dots,c_{n-1}]_\circlearrowright$, where $c_k$ is the number of lattice points with $y$-coordinate~$k$ in the region enclosed by the path and the chosen diagonal.
Note that, in contrast to the area sequence of an ordinary Dyck path of semilength~$n$, the area sequence of an~$n$-periodic Dyck path has length~$n$ rather than $n+1$.
The \Dfn{coarea sequence} is defined accordingly.
\begin{figure}
  \centering
  \begin{tikzpicture}[scale=0.5]
    \draw[dotted] (-3,-5) -- (12,10);

    \drawPath%
    {1,1,0,0,0,1,0,1,1,0,1,1,0,0,0,1,0,1,1,0,1,1,0,0,0,1,0,1,1,0}{0}{lightgrey, dotted}{(-5,-5)}{2}

    \drawPath%
    {1,1,0,0,0,1,0,1,1,0}{0}{black}{(0,0)}{2}
    \drawArea{4,3,3,5,4}{4}{lightgrey}{3}{1}

  \end{tikzpicture}
  \caption{\label{fig:Dyckrotation}A $5$-periodic Dyck path of global shift~$1$.}
\end{figure}
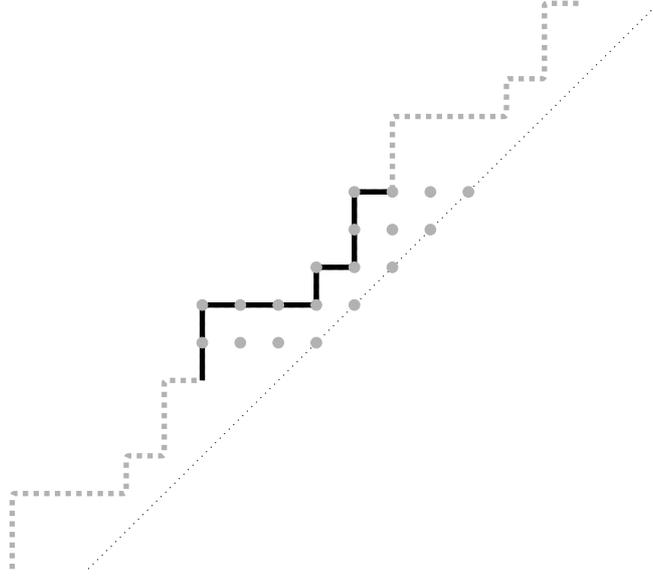
\Cref{fig:Dyckrotation} shows the $5$-periodic Dyck path $[h,v,v,h,v,h,h,h,v,v]_\circlearrowright$ with global shift~$1$ and area sequence $[4,3,3,5,4]_\circlearrowright$.

\medskip

Similar to the case of ordinary Dyck paths, it is immediate from the definition that sending an~$n$-periodic Dyck path to its area sequence (or, respectively, its reversed coarea sequence) is a bijection between~$n$-periodic Dyck paths and necklaces $[c_0,\ldots,c_{n-1}]_\circlearrowright$ satisfying Conditions \eqref{eq:relative} and~\eqref{eq:Cboundary}.
As seen in \Cref{prop:KupischBijection}, these are exactly the same conditions as those for Kupisch series of $n$-CNakayama algebras.
This observation yields the following proposition.

\begin{proposition}
\label{prop:CNak2Dyck}
  Fix $c \geq 0$.
  Sending an~$n$-CNakayama algebra with Kupisch series $[c_0,\ldots,c_{n-1}]_\circlearrowright$ to the~$n$-periodic Dyck path with area sequence $[c_0,\dots,c_{n-1}]_\circlearrowright$ is a bijection between~$n$-CNakayama algebras whose Kupisch series have minimal entry $c+2$ and~$n$-periodic Dyck paths of global shift~$c$.
\end{proposition}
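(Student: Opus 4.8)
The plan is to realise the map as a composition of two bijections that both target the same set of integer necklaces. By the second part of \Cref{prop:KupischBijection}, sending an $n$-CNakayama algebra to its Kupisch series is a bijection onto the set $\mathcal{N}$ of length-$n$ necklaces satisfying \eqref{eq:relative} and \eqref{eq:Cboundary}; under this bijection the algebras whose Kupisch series has minimal entry $c+2$ correspond exactly to those necklaces in $\mathcal{N}$ with minimal entry $c+2$. On the other side, the area-sequence map discussed just above the statement is already known to be a bijection from $n$-periodic Dyck paths onto the same set $\mathcal{N}$. Composing, an $n$-CNakayama algebra is sent to the unique $n$-periodic Dyck path having the same area sequence, and this is a bijection between all $n$-CNakayama algebras and all $n$-periodic Dyck paths. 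It therefore remains only to match the two defining restrictions, i.e. to show that an $n$-periodic Dyck path has global shift $c$ precisely when its area sequence has minimal entry $c+2$.

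This equivalence is the heart of the argument, and in fact only one direction is needed: if every shift-$c$ path has minimal area entry $c+2$, then since $c\mapsto c+2$ is a bijection from $\{c\ge 0\}$ onto $\{m\ge 2\}$ and every entry of a necklace in $\mathcal{N}$ is at least $2$, the already-established bijection onto $\mathcal{N}$ must carry the paths of global shift $c$ exactly onto the necklaces of minimal entry $c+2$. To see that a path of global shift $c$ has minimal area entry $c+2$, I would use that it stays weakly but not strictly above the diagonal $y=x+c$, and hence meets it; I expect the heights at which it does so to be precisely the heights where the enclosed region is narrowest, since moving the path one step away from the diagonal increases the number of enclosed lattice points at that height. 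At such a touching height a direct count from the convention in force---squares recorded by their top-left corner, and lattice points counted rather than cells---should yield the constant value $c+2$. The example in \Cref{fig:Dyckrotation}, where the shift is $1$ and the minimal area entry is $3$, illustrates this count.

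The step I expect to be most delicate is pinning down the additive constant as exactly $+2$ and confirming that the minimum is attained precisely at the touching heights, because of the unusual coordinate conventions (the horizontal step points left, the vertical step points down, and each square is identified with its top-left corner) together with the interplay between the diagonal $y=x+c$ used to position the path and the reference used to count area. A secondary point requiring care is well-definedness on equivalence classes: I would check that the area sequence is invariant under diagonal translation of the bi-infinite path, so that the map descends to $n$-periodic Dyck paths, and that a cyclic rotation of the step necklace corresponds to a cyclic rotation of the area necklace, so that both the map and its inverse respect the necklace identifications. Once the offset $+2$ is established and well-definedness is checked, restricting the composed bijection to fixed global shift $c$ and fixed minimal entry $c+2$ yields the claimed statement.
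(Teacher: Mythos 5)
Your proposal is correct and follows essentially the same route as the paper, which also obtains the result by composing the Kupisch-series bijection of \Cref{prop:KupischBijection}\eqref{it:CKupisch} with the (definitionally immediate) bijection sending an $n$-periodic Dyck path to its area sequence, both landing in the necklaces satisfying \Cref{eq:relative,eq:Cboundary}. The only difference is that you spell out the matching of global shift $c$ with minimal area entry $c+2$ (correctly, as the touching heights of the diagonal $y=x+c$ realise the minimum, cf.\ \Cref{fig:Dyckrotation}), whereas the paper treats this as immediate from the definitions.
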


This proposition also has the following corollary.

\begin{corollary}[{\cite[Exercise~3.1.10c]{BLL}}]\label{cor:Cmin}
  For any $c \geq 0$, the number of~$n$-CNakayama algebras whose
  Kupisch series has minimal entry~$c+2$ equals the number of
  balanced binary~$n$-necklaces\footnote{\OEIS{A003239}}.
  Explicitly, this number is
  \[
    \frac{1}{2n}\sum_{k\divides n}\phi\big(n/k\big)\binom{2k}{k},
  \]
  where~$\phi$ is Euler's totient, the number of integers relatively prime to the argument.
\end{corollary}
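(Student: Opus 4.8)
The plan is to split the statement into two independent parts: the combinatorial identification of the two families, and the explicit closed formula.

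For the first part, the equality between the number of $n$-CNakayama algebras whose Kupisch series has minimal entry $c+2$ and the number of balanced binary $n$-necklaces is immediate from \Cref{prop:CNak2Dyck}. That proposition furnishes, for each fixed $c \geq 0$, a bijection between these algebras and the $n$-periodic Dyck paths of global shift~$c$. Since by definition an $n$-periodic Dyck path is precisely a balanced binary $n$-necklace together with a choice of global shift, fixing the shift to be~$c$ leaves exactly the underlying balanced binary $n$-necklace as remaining data. Thus the set of $n$-periodic Dyck paths of global shift~$c$ is in obvious bijection with balanced binary $n$-necklaces, and in particular the count does not depend on~$c$.

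For the explicit formula, I would enumerate balanced binary $n$-necklaces directly by Burnside's lemma applied to the cyclic group $C_{2n}$ acting by rotation on the $2n$ bead positions, of which~$n$ are to be colored white and~$n$ black. The rotation by~$d$ decomposes the $2n$ positions into $g = \gcd(2n,d)$ cycles, each of length $2n/g$, and a coloring is fixed precisely when it is constant on each cycle. For such a fixed coloring to be balanced one must select exactly half of the $g$ cycles to be white, which is possible only when $g$ is even and then in $\binom{g}{g/2}$ ways. Using the standard fact that the number of $d \in \{0,\dots,2n-1\}$ with $\gcd(2n,d)=g$ equals $\phi(2n/g)$, Burnside's lemma yields
\[
  \frac{1}{2n}\sum_{\substack{g \divides 2n\\ g \text{ even}}} \phi\big(2n/g\big)\binom{g}{g/2}.
\]
Writing $g = 2k$, the condition that $g$ be even and divide $2n$ becomes $k \divides n$, and $2n/g = n/k$, so the sum reindexes to the claimed closed form $\frac{1}{2n}\sum_{k \divides n}\phi(n/k)\binom{2k}{k}$.

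The argument is entirely routine; the only steps requiring care are the balance constraint, which is what forces the summand to vanish unless $\gcd(2n,d)$ is even and produces the central binomial coefficient $\binom{g}{g/2}$, and the reindexing $g = 2k$ that converts the sum over even divisors of $2n$ into a sum over all divisors of~$n$. Since the result is recorded as \cite[Exercise~3.1.10c]{BLL}, an alternative would be to cite the standard formula $\frac{1}{N}\sum_{d \divides \gcd(N,k)}\phi(d)\binom{N/d}{k/d}$ for the number of binary necklaces of length~$N$ with~$k$ black beads and specialize to $N = 2n$, $k = n$, where $\gcd(N,k) = n$, obtaining the same expression after the substitution $d \mapsto n/k$.
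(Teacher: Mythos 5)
Your proposal is correct and follows the same route as the paper: the identification with balanced binary $n$-necklaces is exactly the intended consequence of \Cref{prop:CNak2Dyck} (an $n$-periodic Dyck path of fixed global shift $c$ is nothing but its underlying balanced binary $n$-necklace), and the paper simply cites \cite[Exercise~3.1.10c]{BLL} for the closed formula. Your Burnside computation, including the observation that only rotations with $\gcd(2n,d)$ even contribute and the reindexing $g=2k$, is a correct and complete substitute for that citation.
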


\subsection{Combinatorial statistics on Dyck paths}
\label{sec:Dyck-statistics}

It will be convenient to give names to certain special points in (periodic) Dyck paths.
Note that, a priori, we cannot refer to individual steps in periodic Dyck paths or elements of the associated necklace, because they are only defined up to rotation.
However, we can fix a (cyclic) labelling of the coordinates with $0,\dots, n-1$ as provided by the correspondence with the simple modules of the associated~$n$-CNakayama algebra.

\begin{definition}
\label{def:Dyck-statistics}
  Let~$D$ be a Dyck path of semilength~$n$, or an $n$-periodic Dyck path.

  A \Dfn{peak}\footnote{\FindStat{St000015}} at coordinates $(i,j)$ is a horizontal step with $x$-coordinate~$i$ followed by a vertical step with $y$-coordinate~$j$.
  A point $(i,j)$ is a peak if and only if $c_i \geq c_{i-1}$ and $j=i+c_i-1$, except that for Dyck paths $(0, c_0-1)$ is also a peak.

  A \Dfn{valley}\footnote{\FindStat{St000053}} at coordinates $(i,j)$ is a vertical step with $y$-coordinate~$j$ followed by a horizontal step with $x$-coordinate~$i$.
  A point $(i,j)$ is a valley if and only if $c_i \geq c_{i-1}$ and $j=i+c_{i-1}-2$.

  A \Dfn{$1$-rise}\footnote{\FindStat{St000445}} at coordinates $(i,j)$ is a horizontal step with $x$-coordinate~$i$ and final $y$-coordinate $j$, which is neither preceded nor followed by a horizontal step.
  A point $(i,j)$ is a $1$-rise if and only if $c_i = c_{i-1}$ for $i>0$, or $c_0=2$ for $i=0$, and $j=i+c_i-1$.

  A \Dfn{double rise} at coordinates~$(i,j)$ is a segment of two consecutive horizontal steps whose midpoint has coordinates~$(i,j)$.
  There is a double rise with midpoint at $y=j$ if and only if $d_j > 1$ and  $d_{j+1}-d_j=1$.

  A \Dfn{double fall} at coordinates~$(i,j)$ is a segment of two consecutive vertical steps whose midpoint has coordinates~$(i,j)$.
  There is a double rise with midpoint at $x=i$ if and only if $c_i > 1$ and $c_{i-1}-c_i=1$.

  A \Dfn{return}\footnote{\FindStat{St000011}} at position~$i$ is a (necessarily vertical) step with final coordinates $(i,i)$.  There is a return at position~$i$ if and only if $c_{i-1}=2$ and $i>0$, and if and only if $d_{i+1}=2$ or, for Dyck paths, $i=n$.  A Dyck path is \Dfn{prime} if it has only one return.

  A \Dfn{$1$-cut} at position~$i$ is an occurrence of a horizontal step with $x$-coordinate~$i$ and a vertical step with $y$-coordinate $i+1$.

  A \Dfn{$k$-hill}\footnote{\FindStat{St000674}, \FindStat{St001139}, \FindStat{St001141}} at position~$i$ is a segment of~$k$ consecutive horizontal steps followed by~$k$ consecutive vertical steps, starting at $(i,i)$.

  A \Dfn{rectangle} at coordinates~$(i+1,j)$ is a valley at $(i+1,j)$, such that the next valley has $x$-coordinate strictly larger than $j+1$.
  In terms of area sequences, this is $c_{i+1}+1 = c_i + c_{i+c_i}$, with $j=i+c_i-1$.
\end{definition}

\begin{figure}[t]
  \centering
  \begin{tikzpicture}[scale=0.7]
    \draw[->] (11, 5) -- node[above] {$y$} (10,5);
    \draw[->] (11, 5) -- node[right] {$x$} (11,4);
    \draw[dotted] (-1,0) grid (10,4);
    \drawPath%
    {1,0}{0}{black}{(0,1)}{2}
    \drawPath%
    {0,1}{0}{black}{(2,2)}{2}
    \drawPath%
    {1,0,1}{0}{black}{(4,1)}{2}
    \drawPath%
    {0,0}{0}{black}{(6,2)}{2}
    \drawPath%
    {1,1}{0}{black}{(9,1)}{2}
    \draw[fill=black] (0,2) circle (.15);
    \draw[fill=black] (3,2) circle (.15);
    \draw[fill=black] (4,2) circle (.15);
    \draw[fill=black] (7,2) circle (.15);
    \draw[fill=black] (9,2) circle (.15);
  \end{tikzpicture}
  \caption{Coordinates of peaks, valleys, 1-rises, double rises, and double falls.}
  \label{fig:DyckStatistics}
\end{figure}

We refer to \Cref{fig:regular-on-Dyck-paths} on page~\pageref{fig:regular-on-Dyck-paths} for examples of returns, $1$-cuts and $2$-hills, and to \Cref{fig:gdtwoexample} on page~\pageref{fig:gdtwoexample} for examples of rectangles.

\subsection{Some homological properties of finite dimensional algebras}
\label{sec:homprop}

In this section, we recall several known homological properties of finite dimensional algebras that we need in later sections.

\medskip

We quickly recall the definitions of projective dimension and $\Ext$ here and refer for example to \cite{Ben} for detailed information.
Recall that the \Dfn{projective cover} of a module~$M$ is by definition the unique surjective map (up to isomorphism) $P \rightarrow M$ such that $P$ is projective of minimal vector space dimension. Dually the \Dfn{injective envelope} of~$M$ is by definition the unique injective map $M \rightarrow I$ such that $I$ is injective of minimal vector space dimension. One often just speaks of $P$ as the projective cover for short and also as $I$ being the injective envelope. We will often use that a module $M$ is isomorphic to its projective cover $P$ if and only if $M$ has the same vector space dimension as its projective cover $P$. This follows immediately from the fact that a projective cover is a surjection and that a module homomorphism is an isomorphism if and only if it is surjective and both modules have the same vector space dimension.
For a module~$M$, the \Dfn{first syzygy module} $\Omega^{1}(M)$ is by definition the kernel of the projective cover $P \rightarrow M$ of~$M$. Inductively, one then defines for $n \geq 0$ the \Dfn{$n$-th syzygy module} of~$M$ as $\Omega^n(M)\defeq \Omega^1(\Omega^{n-1}(M))$ with $\Omega^0(M)=M$.
The \Dfn{projective dimension} $\pd(M)$ of~$M$ is defined as the smallest integer $n \geq 0$ such that $\Omega^n(M)$ is projective and as infinite in case no such~$n$ exists.
For two~$A$-modules~$M$ and $N$ one defines $\Ext_A^1(M,N)$ as $\Ext_A^1(M,N)\defeq D(\overline{\Hom}_A(N,\tau(M)))$, where $\tau(M)$ denotes the Auslander-Reiten translate of~$M$ and $\overline{\Hom}_A(X,Y)$ denotes the space of homomorphisms between two~$A$-modules $X$ and $Y$ modulo the space of homomorphisms between $X$ and $Y$ that factor over an injective~$A$-module.
For $n \geq 1$, one then defines $\Ext_A^n(M,N)\defeq\Ext_A^1(\Omega^{n-1}(M),N)$.
We furthermore define $\Ext_A^0(M,N)\defeq\Hom_A(M,N)$.
Note that we choose here to present the definition of $\Ext$ in the probably shortest way possible (using the Auslander-Reiten formulas, see for example \cite[Theorem~6.3. (Chapter~III)]{SkoYam}) and we refer for example to \cite[Chapter~2.4]{Ben} for the classical definition.
For the practical calculation of the projective cover, injective envelope and syzygies of modules in Nakayama algebras we refer the reader to the preliminaries of \cite{Mar}.
\begin{lemma}
\label{extformula}
  Let~$A$ be a finite-dimensional algebra.
  Let~$S$ be a simple~$A$-module and~$M$ an~$A$-module with minimal projective resolution
  \[
    \cdots \rightarrow P_i \rightarrow \cdots \rightarrow P_1 \rightarrow P_0 \rightarrow M \rightarrow 0.
  \]
  For $\ell \geq 0$, $\Ext_A^\ell(M,S) \neq 0$ if and only if there is a surjection $P_\ell \rightarrow S$.
  Dually, let
  \[
    0 \rightarrow M \rightarrow I_0 \rightarrow I_1 \rightarrow \cdots \rightarrow I_i \rightarrow \cdots
  \]
  be a minimal injective coresolution of~$M$.
  For $\ell \geq 0$, $\Ext_A^\ell(S,M) \neq 0$ if and only if there is an injection $S \rightarrow I_\ell$.
\end{lemma}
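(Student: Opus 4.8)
The plan is to reduce the statement to the classical computation of $\Ext$ via a minimal projective (respectively injective) resolution, which agrees with the definition via the Auslander--Reiten formula recalled above. Concretely, $\Ext_A^\ell(M,S)$ is the $\ell$-th cohomology of the cocomplex obtained by applying $\Hom_A(-,S)$ to the deleted projective resolution $P_\bullet$ of~$M$, and I would begin by recording this identification, citing, e.g., \cite[Chapter~2.4]{Ben}.

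The key observation is that minimality forces all differentials in this cocomplex to vanish. A projective resolution is minimal exactly when each differential $d_i\colon P_i\to P_{i-1}$ has image contained in the radical $\operatorname{rad}(P_{i-1})$. Since~$S$ is simple, every homomorphism $P_{i-1}\to S$ annihilates $\operatorname{rad}(P_{i-1})$, so the precomposition map $\Hom_A(P_{i-1},S)\to\Hom_A(P_i,S)$ is identically zero. Hence $\Ext_A^\ell(M,S)\cong\Hom_A(P_\ell,S)$ for every $\ell\ge 0$. As any nonzero homomorphism from a module onto the simple module~$S$ is automatically surjective, $\Hom_A(P_\ell,S)\ne 0$ holds if and only if there is a surjection $P_\ell\to S$, which is the first claim.

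For the dual statement I would argue symmetrically, using a minimal injective coresolution $I^\bullet$ of~$M$ and the functor $\Hom_A(S,-)$. Here minimality means that each $I^i$ is the injective envelope of the $i$-th cosyzygy, that is, of $\ker(\delta^i)=\operatorname{im}(\delta^{i-1})$; since an injective envelope is an essential extension, every simple submodule of $I^i$ meets $\ker(\delta^i)$ nontrivially and is therefore contained in it, so $\operatorname{soc}(I^i)\subseteq\ker(\delta^i)$. Consequently the image of any homomorphism $S\to I^i$, being a simple submodule, lies in $\ker(\delta^i)$, the postcomposition maps $\Hom_A(S,I^i)\to\Hom_A(S,I^{i+1})$ vanish, and $\Ext_A^\ell(S,M)\cong\Hom_A(S,I^\ell)$. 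A nonzero homomorphism from the simple module~$S$ into any module is injective, so $\Hom_A(S,I^\ell)\ne 0$ if and only if there is an injection $S\to I^\ell$.

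The only genuine subtlety I anticipate is the bookkeeping around the two definitions of $\Ext$: one must make sure that the Auslander--Reiten-formula definition used in this article agrees with the derived-functor definition underlying the resolution computation above. This equivalence is standard --- it is precisely the content of the Auslander--Reiten formula read as a theorem rather than a definition --- so the argument is otherwise routine, and the characterizations of minimality in terms of the radical and the socle do all the real work.
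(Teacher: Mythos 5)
Your argument is correct and complete: the identification of $\Ext_A^\ell(M,S)$ with the cohomology of $\Hom_A(P_\bullet,S)$, the vanishing of all differentials forced by minimality (images contained in the radical, respectively socles contained in the kernels), and the observation that a nonzero homomorphism to (from) a simple module is automatically surjective (injective) yield exactly the claimed characterizations. The paper does not write out an argument at all but simply cites \cite[Corollary~2.5.4]{Ben}; what you have written is the standard proof behind that citation, including the correctly flagged bookkeeping point that the Auslander--Reiten-formula definition of $\Ext$ used in this article agrees with the derived-functor definition.
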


\begin{proof}
  See for example~\cite[Corollary~2.5.4]{Ben}.
\end{proof}

\section{1-regular and 2-regular simple modules}
\label{sec:2-reg}

In this section we provide characterizations of $1$- and $2$-regular simple modules of Nakayama algebras in terms of Kupisch series.
We then exhibit bijections that transform these conditions into local properties of Dyck paths and periodic Dyck paths.

\begin{definition}
\label{regulardefinition}
  Let~$A$ be a finite-dimensional algebra and let~$S$ be a simple~$A$-module.
  For~$k \in \NN$, the module~$S$ is \Dfn{$k$-regular} if
  \begin{enumerate}
    \item $\pd(S)=k$, \label{it:regdef1}
    \item $\Ext_A^i(S,A)=0$ for $0 \leq i < k$, and
    \item $\dim\Ext_A^k(S,A)=1$.
  \end{enumerate}
\end{definition}
Recall that the condition $\dim\Ext_A^k(S,A)=1$ is equivalent to the left~$A$-module $\Ext_A^k(S,A)$ being simple, since modules over quiver algebras are simple if and only if they have vector space dimension equal to one.

\medskip

The definition of $k$-regular simple modules is motivated by the notion of the restricted Gorenstein condition which is used in higher Auslander-Reiten theory, see for example~\cite[Proposition~1.4 and Theorem~2.7]{Iy}.
We study the restricted Gorenstein condition in the special case of Nakayama algebras of global dimension at most two in \Cref{sec:globaldim}.
The simple module $S_{n-1}$ for an $n$-LNakayama algebra is the unique simple projective module and thus $S_{n-1}$ is never $k$-regular for $k \geq 1$.
Thus it is no loss of generality to exclude the simple module $S_{n-1}$ in our treatment of $k$-regular simple modules.

\medskip

The most important case of $k$-regularity is $2$-regularity, which was recently used by Enomoto~\cite{En} to reduce the classification of exact structures on categories of
finitely generated projective~$A$-modules for Artin algebras~$A$ to the classification of $2$-regular simple modules.
We refer to~\cite{Bue} for the definitions and discussions of exact categories.
Enomoto's result, restricted to finite-dimensional algebras, is as follows.

\begin{theorem}[\protect{\cite[Theorem~3.7]{En}}]
\label{thm:enomoto}
  Let~$A$ be a finite-dimensional algebra and let $\mathcal{E}$ be the category of finitely generated projective~$A$-modules.
  Then there is a bijection between
  \begin{enumerate}
    \item exact structures on $\mathcal{E}$ and
    \item sets of isomorphism classes of $2$-regular simple~$A$-modules.
  \end{enumerate}
\end{theorem}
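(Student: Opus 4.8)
Since this is Enomoto's theorem, my plan is to reconstruct the route through relative homological algebra and the functor category. The first step is to reduce the (geometric-looking) problem of classifying exact structures to a homological one. The key point is that a conflation in an exact structure on $\mathcal{E} = \mathrm{proj}\,A$ is a categorical kernel--cokernel pair in $\mathcal{E}$ and need \emph{not} be a short exact sequence of $A$-modules; this is what makes the problem nontrivial even though $\mathcal{E}$ has no nonsplit extensions in $\mathrm{mod}\,A$. I would invoke the general classification of exact structures on an idempotent-complete additive category: they correspond bijectively to the \emph{closed} additive subbifunctors of the Ext-bifunctor of the largest exact structure (the one whose conflations are all kernel--cokernel pairs), and these in turn to the Serre subcategories of a finite-length abelian ``defect'' category $\mathcal{D}$ extracted from the finitely presented functors on $\mathcal{E}$. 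Here one uses the standard equivalence $\mathrm{mod}(\mathrm{proj}\,A) \simeq \mathrm{mod}\,A$ to describe $\mathcal{D}$ in terms of $A$-modules. Because Serre subcategories of \emph{any} finite-length abelian category are in bijection with subsets of its isomorphism classes of simple objects, this reduction already explains why the lattice of exact structures is Boolean, and reduces the theorem to identifying the simple objects of $\mathcal{D}$.

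The second, and central, step is to match these simple objects with the $2$-regular simple $A$-modules. To do this I would attach to a simple module $S$ the beginning of its minimal projective resolution $\cdots \to P_2 \to P_1 \to P_0 \to S \to 0$ and apply the duality $(-)^{*} = \Hom_A(-, A)$. By \Cref{extformula}, the three defining conditions of $2$-regularity translate precisely into exactness of
\[ 0 \to P_0^{*} \to P_1^{*} \to P_2^{*} \to \Ext_A^2(S,A) \to 0, \]
a minimal projective resolution of the \emph{simple} right $A$-module $\Ext_A^2(S,A)$ of projective dimension $2$: the vanishing $\Ext_A^0(S,A) = \Ext_A^1(S,A) = 0$ gives exactness on the left and in the middle, $\pd(S) = 2$ truncates the complex, and $\dim \Ext_A^2(S,A) = 1$ forces the cokernel to be simple. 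Thus each $2$-regular simple $S$ produces, via its resolution, one indecomposable generator of $\mathcal{D}$, and I would check that this assignment is a bijection onto the simple objects of $\mathcal{D}$, with the empty subbifunctor recovering the split structure and the full one the maximal structure.

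The step I expect to be the main obstacle is showing that closed subbifunctors correspond to \emph{all} Serre subcategories of $\mathcal{D}$ --- equivalently, that every subset of $2$-regular simples is realized by a genuine exact structure and that distinct subsets give distinct structures. This amounts to proving that the elementary conflations attached to different $2$-regular simples are ``independent'', i.e.\ that the corresponding simple defects admit no interfering extensions and that adjoining any chosen collection of them still satisfies Quillen's closure axioms. I expect this rigidity to come not from $\pd(S) = 2$ alone but from the full strength of $2$-regularity: the vanishing of $\Ext_A^i(S,A)$ for $i < 2$ is exactly what decouples the generators, and $\dim \Ext_A^2(S,A) = 1$ is what makes each generator atomic rather than decomposable. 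Once this independence is established, the bijection between subsets of $2$-regular simples and exact structures follows formally, and the remaining verifications (closure under the exact-category axioms and mutual inverseness of the two assignments) are routine.
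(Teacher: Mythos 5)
The paper does not prove this statement at all: it is imported verbatim from Enomoto \cite{En} and used as a black box, so there is no internal proof to measure your proposal against; the relevant comparison is with Enomoto's original argument. Your outline does follow his route --- reduce exact structures on an idempotent-complete additive category to Serre subcategories of an abelian length category of ``defects'', then identify the simple objects of that category --- and your central computation is correct: for a $2$-regular simple $S$, dualizing the minimal projective resolution with $(-)^{*}=\Hom_A(-,A)$ and invoking \Cref{extformula} does yield an exact sequence $0\to P_0^{*}\to P_1^{*}\to P_2^{*}\to\Ext_A^2(S,A)\to 0$ exhibiting $\Ext_A^2(S,A)$ as a simple left module of projective dimension $2$ satisfying the dual vanishing conditions. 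This is genuinely the mechanism by which $2$-regular simples produce the atomic conflations.

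As a proof, however, the proposal has two real gaps, both of which you only gesture at. First, the reduction ``exact structures $\leftrightarrow$ closed subbifunctors $\leftrightarrow$ Serre subcategories of $\mathcal{D}$'' is itself a substantial theorem (Enomoto's general classification, resting on Dr\"axler--Reiten--Smal{\o}--Solberg and Auslander--Reiten), and you never identify $\mathcal{D}$ concretely: for $\mathcal{E}=\operatorname{proj}A$ it is the subcategory of modules $M$ with $\pd M\le 2$ and $\Hom_A(M,A)=\Ext_A^1(M,A)=0$, and one must actually verify the closure properties that make ``Serre subcategories $=$ subsets of simples'' applicable. Second, and more seriously, you establish only one direction of the identification of simples: a $2$-regular simple $A$-module gives a simple object of $\mathcal{D}$, but you do not show that \emph{every} simple object of $\mathcal{D}$ arises this way --- a priori a non-simple module could admit no proper nonzero subquotient lying in $\mathcal{D}$ and would then be an extra atom. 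Closing this requires the Auslander--Bridger-type duality $\Ext_A^2(-,A)$ between the right- and left-module defect categories (which is also what explains the condition $\dim\Ext_A^k(S,A)=1$ in \Cref{regulardefinition}: a duality must carry simples to simples), together with an argument that every object of $\mathcal{D}$ is filtered by $2$-regular simples. Your ``independence of generators'' paragraph names this as the main obstacle but does not supply the argument, so what you have is a correct roadmap with the right key computation rather than a proof.
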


Thus when an algebra $A$ has exactly $m$ $2$-regular simple modules, it has exactly $2^m$ exact structures on the category of finitely generated projective modules.

\subsection{Description in terms of Kupisch series}

For any~$k$, a $k$-regular simple module~$S_i$ is non-projective by \Cref{regulardefinition}\eqref{it:regdef1}.
In the case of~$n$-LNakayama algebras this means that $i < n-1$, whereas this is no restriction for CNakayama algebras since the latter do not have projective simple modules.
Throughout this section, let~$A$ denote an~$n$-Nakayama algebra with Kupisch series $[c_0,\dots,c_{n-1}]$ and coKupisch series $[d_0,\dots,d_{n-1}]$ and let $S_i$ denote a simple~$A$-module corresponding to the vertex~$i$.
\begin{theorem}
\label{regchara}
  A simple non-projective module $S_i$ is
  \begin{enumerate}
   \item\label{it:1reg} $1$-regular\footnote{\FindStat{St001126}} if and only if
     $c_i - c_{i+1} = d_{i+1} - d_i = 1$,

    \item\label{eq:2regchara} $2$-regular\footnote{\FindStat{St001125}} if and only if
      \[
        c_i = d_{i+2} = 2 \quad \text{and}\quad c_{i+1} - c_{i+2} = d_{i+1} - d_i = 1.
      \]
  \end{enumerate}
\end{theorem}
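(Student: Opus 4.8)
My plan is to compute the beginning of the minimal projective resolution of $S_i$ explicitly, read off the projective dimension, and then dualize by $\Hom_A(-,A)$ to obtain a short complex of left projectives whose cohomology is $\Ext^\bullet_A(S_i,A)$; uniseriality will make all the relevant dimensions visible in terms of the Kupisch and coKupisch series. Concretely, since $P_0=e_iA=\ind_{i,c_i}$, one has $\Omega^1 S_i=\operatorname{rad}(e_iA)=\ind_{i+1,c_i-1}$, hence $P_1=e_{i+1}A$, and the kernel of $P_1\twoheadrightarrow\Omega^1 S_i$ is $e_{i+1}J^{c_i-1}=\ind_{i+c_i,\,c_{i+1}-c_i+1}$, hence $P_2=e_{i+c_i}A$. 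Reading off projectivity of syzygies gives $\pd(S_i)=1\iff c_i-c_{i+1}=1$ and $\pd(S_i)=2\iff c_{i+1}\ge c_i$ together with $c_{i+1}-c_i+1=c_{i+c_i}$.

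Next I would apply $\Hom_A(-,A)$ to $\cdots\to P_2\xrightarrow{f_2}P_1\xrightarrow{f_1}P_0$. Using $\Hom_A(e_kA,A_A)\cong Ae_k$ (a left projective of dimension $d_k$), this becomes a complex of left modules $0\to Ae_i\xrightarrow{\partial_0}Ae_{i+1}\xrightarrow{\partial_1}Ae_{i+c_i}\to\cdots$, where $\partial_0$ is right multiplication by the arrow $\alpha\colon i\to i+1$ (the generator of $f_1$) and $\partial_1$ is right multiplication by the path $\beta\colon i+1\rightsquigarrow i+c_i$ of length $c_i-1$ (the generator of $f_2$). Since each $Ae_k$ is uniserial, the image of $\partial_0$ is the submodule generated by $\alpha$, namely $Je_{i+1}$, of dimension $d_{i+1}-1$, and the image of $\partial_1$ is the submodule generated by $\beta$, namely $J^{c_i-1}e_{i+c_i}$, of dimension $\max(d_{i+c_i}-c_i+1,0)$. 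The inequality $d_{i+1}\le d_i+1$ (from the fact that the reversed coKupisch series is again a Kupisch series) and the complex identity $\partial_1\partial_0=0$ keep all these quantities nonnegative.

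From this I read off $\dim\Ext^0_A(S_i,A)=\dim\ker\partial_0=d_i-d_{i+1}+1$, so $\Ext^0_A(S_i,A)=0\iff d_{i+1}-d_i=1$; and $\dim\Ext^1_A(S_i,A)=1-\max(d_{i+c_i}-c_i+1,0)\in\{0,1\}$, so $\Ext^1_A(S_i,A)=0\iff d_{i+c_i}=c_i$. For the $1$-regular case, when $\pd(S_i)=1$ the resolution stops at $P_1$, forcing $\dim\Ext^1_A(S_i,A)=d_{i+1}-(d_{i+1}-1)=1$ automatically; combining $\pd(S_i)=1$ with $\Ext^0_A(S_i,A)=0$ yields exactly $c_i-c_{i+1}=d_{i+1}-d_i=1$. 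For the $2$-regular case, when $\pd(S_i)=2$ the resolution stops at $P_2$, so $\dim\Ext^2_A(S_i,A)=d_{i+c_i}-\max(d_{i+c_i}-c_i+1,0)$; imposing $\Ext^1_A(S_i,A)=0$ gives $d_{i+c_i}=c_i$, whence $\dim\Ext^2_A(S_i,A)=c_i-1$, and demanding $\dim\Ext^2_A(S_i,A)=1$ forces $c_i=2$ and therefore $d_{i+2}=c_i=2$. Feeding $c_i=2$ into the $\pd(S_i)=2$ condition turns $c_{i+1}-c_i+1=c_{i+c_i}$ into $c_{i+1}-c_{i+2}=1$, while $\Ext^0_A(S_i,A)=0$ supplies $d_{i+1}-d_i=1$; checking that these four equalities conversely imply $\pd(S_i)=2$, $\Ext^0_A(S_i,A)=\Ext^1_A(S_i,A)=0$, and $\dim\Ext^2_A(S_i,A)=1$ closes the equivalence.

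The main obstacle is the honest identification of the differentials $\partial_0,\partial_1$ and of their images as the radical powers $Je_{i+1}$ and $J^{c_i-1}e_{i+c_i}$; this is where uniseriality of $Ae_k$ does the real work, and where the degenerate cases (small $d_{i+c_i}$, the resolution terminating at $P_1$ versus $P_2$, and the index-range questions for LNakayama when $i+c_i$ approaches $n$) must be checked so that the dimension formulas remain valid. The observation that $c_i=2$ and $d_{i+2}=2$ are \emph{forced} rather than assumed — emerging only from the interplay of $\Ext^1$-vanishing and $\dim\Ext^2=1$ — is the conceptual crux of part~\eqref{eq:2regchara}.
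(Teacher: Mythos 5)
Your argument is correct and shares its skeleton with the paper's: compute $P_0=e_iA$, $P_1=e_{i+1}A$, $P_2=e_{i+c_i}A$ (this reproduces exactly \Cref{smallprojdim}), apply $\Hom_A(-,A)$, and compare dimensions. The execution differs in two places worth noting. The paper proves $\Hom_A(S_i,A)=0\iff d_{i+1}=d_i+1$ by a socle/injective-envelope argument and $\Ext^1_A(S_i,A)=0\iff c_i\le c_{i+1}$ via the injective coresolution of $e_rA$ together with \Cref{extformula}, while you read both off the dualized complex by identifying $\operatorname{im}\partial_0=Je_{i+1}$ and $\operatorname{im}\partial_1=J^{c_i-1}e_{i+c_i}$; your criterion $d_{i+c_i}=c_i$ for the vanishing of $\Ext^1$ is equivalent to the paper's because \Cref{eq:KupischCoKupisch} forces $d_{i+c_i}=c_i$ as soon as $c_{i+1}\ge c_i$. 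More substantially, the paper's four-term exact sequences only yield $\dim\Ext^2_A(S_i,A)=d_i+d_{i+c_i}-d_{i+1}$, so after imposing $d_{i+1}=d_i+1$ it obtains $d_{i+c_i}=2$ and then still needs a separate geometric step (the valley forced by $c_{i+1}+1=c_i+c_{i+c_i}$ must lie on the main diagonal) to conclude $c_i=2$; your explicit image computation gives $\dim\Ext^2_A(S_i,A)=d_{i+c_i}-\max(d_{i+c_i}-c_i+1,0)=c_i-1$, so $c_i=2$ drops out algebraically --- a genuinely cleaner route to the crux of part~(2). What remains to be nailed down is exactly what you flag: that a nonzero path of length $k$ generates $J^ke_\bullet$ (true because any admissible ideal in a linear or cyclic quiver is spanned by paths, so a nonzero path of length $k$ lies in $J^k\setminus J^{k+1}$), and the boundary indices for LNakayama algebras, where $i+c_i=n$ forces $c_{i+1}=c_i-1$ and hence $\pd(S_i)=1$, so no case is lost.
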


The first step towards this theorem is a description of the non-projective simple modules of projective dimensions one and two.

\begin{proposition}
\label{smallprojdim}
  A simple non-projective module $S_i$ has
  \begin{enumerate}
  \item\label{it:pd1} $\pd(S_i)=1$\footnote{\FindStat{St001007}} if and only if $c_{i+1} + 1 = c_i$, and
    \item\label{it:pd2} $\pd(S_i)=2$\footnote{\FindStat{St001011}} if and only if $c_{i+1} + 1 = c_{i+c_i} + c_i$.
  \end{enumerate}
\end{proposition}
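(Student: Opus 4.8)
The plan is to compute the beginning of the minimal projective resolution of $S_i$ explicitly, using the uniserial structure of the indecomposable modules $\ind_{a,b}$ recorded in \Cref{sec:Nak2Dyck}. The key facts are that $\ind_{a,b}$ is projective precisely when $b=c_a$, in which case it equals $e_aA$, and that the projective cover of a non-projective $\ind_{a,b}$ (so $b<c_a$) is $e_aA=\ind_{a,c_a}$, with the natural surjection $\ind_{a,c_a}\twoheadrightarrow\ind_{a,b}$ having kernel $e_aJ^b/e_aJ^{c_a}=e_aJ^b$. Since this kernel is uniserial with top $S_{a+b}$ and dimension $c_a-b$, one obtains the syzygy formula
\[
  \Omega^1(\ind_{a,b})=\ind_{a+b,\,c_a-b}\qquad(b<c_a),
\]
which I would either cite from the preliminaries of~\cite{Mar} or derive in one line from the composition series $S_a,S_{a+1},\dots,S_{a+c_a-1}$ of $e_aA$. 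I should check that the right-hand side is a legitimate module: in the CNakayama case all indices are read modulo~$n$ and there is nothing to verify, while for LNakayama the inequalities in~\Cref{eq:relative} together with $c_{n-1}=1$ force $c_a\leq n-a$, hence $a+b\leq n-1$, so $S_{a+b}$ exists.

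With this tool in hand, part~\eqref{it:pd1} is immediate. The projective cover of $S_i=\ind_{i,1}$ is $\ind_{i,c_i}$, so $\Omega^1(S_i)=\ind_{i+1,\,c_i-1}$, which is nonzero because $S_i$ is non-projective, i.e.\ $c_i\geq 2$. By the projectivity criterion, $\pd(S_i)=1$ holds exactly when $\ind_{i+1,c_i-1}$ is projective, that is $c_i-1=c_{i+1}$, which is the asserted condition $c_{i+1}+1=c_i$.

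For part~\eqref{it:pd2} I would iterate once more. Writing $M=\Omega^1(S_i)=\ind_{i+1,\,c_i-1}$, the equality $\pd(S_i)=2$ requires $M$ non-projective and $\Omega^1(M)$ projective. Since $c_{i+1}+1\geq c_i$ always, $M$ is non-projective exactly when $c_{i+1}\geq c_i$; applying the syzygy formula with $a=i+1$ and $b=c_i-1$ then gives
\[
  \Omega^2(S_i)=\Omega^1(M)=\ind_{i+c_i,\;c_{i+1}-c_i+1}.
\]
This module is projective precisely when $c_{i+1}-c_i+1=c_{i+c_i}$, i.e.\ $c_{i+1}+1=c_{i+c_i}+c_i$, as claimed.

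The one point that needs care—and the only genuine subtlety—is that the stated equation should characterize $\pd(S_i)=2$ as a standalone condition, without separately assuming $\pd(S_i)\neq 1$ to justify the computation of $\Omega^2(S_i)$. This is self-correcting: if $c_{i+1}+1=c_{i+c_i}+c_i$ held together with $c_{i+1}+1=c_i$, we would get $c_{i+c_i}=0$, contradicting $c_j\geq 1$; hence the equation already forces $c_{i+1}\geq c_i$, so $M$ is non-projective and the formula for $\Omega^2(S_i)$ applies. I would close by noting that the same recursion makes the higher projective dimensions transparent, but only these first two syzygy steps are needed here.
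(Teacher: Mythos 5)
Your proof is correct and follows essentially the same route as the paper: both compute the first two syzygies of $S_i$ explicitly (the paper by splicing the short exact sequences $0 \to e_iJ \to e_iA \to S_i \to 0$ and $0 \to e_{i+1}J^{c_i-1} \to e_{i+1}A \to e_iJ \to 0$, you by the equivalent closed-form syzygy formula $\Omega^1(\ind_{a,b})=\ind_{a+b,\,c_a-b}$) and test projectivity by comparing dimensions with the projective cover. Your additional remark that the condition in part~(2) is self-excluding from that of part~(1) is a nice touch the paper leaves implicit.
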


\begin{proof}
  We have that $\pd(S_i) = 1$ if and only if the module $e_i J$ in the short exact sequence
  \[
    0 \rightarrow e_i J \rightarrow e_i A \rightarrow S_i \rightarrow 0
  \]
  is projective.
  This is the case if and only if $e_i J$ is isomorphic to its projective cover $e_{i+1}A$, which is equivalent to $c_i -c_{i+1} = 1$ by comparing vector space dimensions and using that $\dim(e_{i+1}A)= c_{i+1}$ and $\dim(e_i J^k)=c_i -k$.

  \medskip

  The beginning of a minimal projective resolution of~$S_i$ is given by splicing together the two short exact sequences
  \[
    0 \rightarrow e_i J \rightarrow e_i A \rightarrow S_i \rightarrow 0
  \]
  \[
    0 \rightarrow e_{i+1} J^{c_i-1} \rightarrow e_{i+1} A \rightarrow e_i J \rightarrow 0.
  \]
  We have already seen in~\eqref{it:pd1} that $\pd(S_i) \geq 2$ if and only if $c_i \leq c_{i+1}$.
  Moreover, $\pd(S_i)=2$ if additionally $e_{i+1} J^{c_i-1}$ is projective.
  Now $e_{i+1} J^{c_i-1}$ being projective is equivalent to the condition that it is isomorphic to its projective cover $e_{i+c_i}A$, which in turn translates into the condition $c_{i+1}-(c_i -1)=c_{i+c_i}$ by comparing vector space dimensions.
\end{proof}

\begin{lemma}
  \label{lemmahomandext1}
  For a simple non-projective module $S_i$, we have
  \begin{enumerate}
  \item $\Hom_A(S_i,A)=0 \Leftrightarrow d_{i+1} = d_i + 1,$
  \item $\Ext_A^1(S_i,A)=0 \Leftrightarrow c_i < c_{i+1} + 1.$
  \end{enumerate}
\end{lemma}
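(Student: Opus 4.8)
The plan is to compute $\Hom_A(S_i,A)=\Ext_A^0(S_i,A)$ and $\Ext_A^1(S_i,A)$ separately, reading everything off the uniserial structure of the indecomposable projectives and injectives. Throughout I use that $A_A=\bigoplus_j e_jA$, that $e_jA=\ind_{j,c_j}$ is uniserial with simple socle $S_{j+c_j-1}$, and that the injective envelope of $S_i$ is $I_i=D(Ae_i)=\ind_{i+1-d_i,d_i}$, of dimension $d_i$.

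For~(2) I would use the definition of $\Ext$ through the Auslander--Reiten translate. Since $S_i$ is non-projective, $\tau(S_i)=S_{i+1}$, so $\Ext_A^1(S_i,A)=D(\overline{\Hom}_A(A,S_{i+1}))$. Now $\Hom_A(A,S_{i+1})\cong S_{i+1}$ is one-dimensional, spanned by the canonical projection $\pi\colon A\to S_{i+1}$ onto the top of the summand $e_{i+1}A$; hence $\Ext_A^1(S_i,A)=0$ if and only if $\pi$ factors through an injective module. Such a factorisation restricts to a factorisation of the surjection $e_{i+1}A\to S_{i+1}$ through an indecomposable injective $I$, which is then uniserial with top $S_{i+1}$ and therefore of the form $\ind_{i+1,m}$ with $m\le c_{i+1}$. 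By \Cref{indinjnak}, $\ind_{i+1,m}$ is injective precisely when $c_i\le m$, so such an $I$ exists if and only if there is an $m$ with $c_i\le m\le c_{i+1}$, that is, if and only if $c_i\le c_{i+1}$, which is exactly $c_i<c_{i+1}+1$.

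For~(1) I would first reduce to a count of socles: $\Hom_A(S_i,A)=\bigoplus_j\Hom_A(S_i,e_jA)$ is nonzero if and only if $S_i$ appears as the socle $S_{j+c_j-1}$ of some indecomposable projective, i.e.\ if and only if some projective module has socle $S_i$. The dual half of \Cref{indinjnak} states that $I_i$ is projective exactly when $d_i\ge d_{i+1}$; since the reverse of the coKupisch series is a Kupisch series, \eqref{eq:relative} gives $d_{i+1}\le d_i+1$, so $d_i\ge d_{i+1}$ is precisely the negation of $d_{i+1}=d_i+1$. It therefore suffices to show that some projective has socle $S_i$ if and only if $I_i$ itself is projective. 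One direction is clear because $I_i$ has socle $S_i$. For the converse I would induct upward on length inside the uniserial module $I_i$, whose submodules are exactly the modules with socle $S_i$ of lengths $1,\dots,d_i$: if $e_jA$ is projective with socle $S_i$ and length $k<d_i$, then the length-$(k+1)$ submodule of $I_i$ has top $S_{j-1}$, so $k+1\le c_{j-1}$, and together with the Kupisch inequality $c_{j-1}\le c_j+1=k+1$ from \eqref{eq:relative} this forces $c_{j-1}=k+1$, making that larger submodule the projective $e_{j-1}A$. Iterating reaches length $d_i$ and exhibits $I_i$ as projective.

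The identifications of socles, tops and dimensions are routine for Nakayama algebras, as is the factorisation bookkeeping in~(2). The only genuinely non-formal step, and the one I expect to be the main obstacle, is the converse in~(1): a priori a ``short'' projective with socle $S_i$ could sit strictly below a non-projective injective envelope $I_i$. The point is that the Kupisch inequality \eqref{eq:relative} forces every such projective to extend upward to the next projective in its column, so that the existence of any projective with socle $S_i$ propagates all the way up to $I_i$.
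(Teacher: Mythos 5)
Your proof is correct, and both halves take a route that is genuinely different from the paper's. For part~(2), the paper reduces $\Ext_A^1(S_i,A)=0$ to the vanishing of $\Ext_A^1(S_i,e_rA)$ for the non-injective indecomposable projectives, writes down the start of the minimal injective coresolution $0\to e_rA\to D(Ae_{r+c_r-1})\to D(Ae_{r-1})$, and applies \Cref{extformula}, so that the only possible obstruction is an embedding $S_i\to D(Ae_{r-1})$, forcing $r=i+1$ and yielding the condition ``$e_{i+1}A$ is injective''. You instead unwind the Auslander--Reiten description $\Ext_A^1(S_i,A)=D\bigl(\overline{\Hom}_A(A,S_{i+1})\bigr)$ directly and ask when the projection onto the top of $e_{i+1}A$ factors through an injective, arriving at the equivalent condition that some quotient $\ind_{i+1,m}$ with $c_i\le m\le c_{i+1}$ is injective; both versions land on $c_i\le c_{i+1}$ via \Cref{indinjnak}, and yours is arguably the more immediate use of the paper's own definition of $\Ext^1$. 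For part~(1), both arguments reduce to the equivalence ``$S_i$ lies in the socle of a projective if and only if $I(S_i)=D(Ae_i)$ is projective''; the paper gets this in one line by citing that the injective envelope of $A$ is projective--injective for any Nakayama algebra, whereas you prove the nontrivial implication by climbing up the uniserial module $I(S_i)$ one composition factor at a time, using the Kupisch inequality $c_{j-1}\le c_j+1$ to force each successive submodule to be projective. You correctly identify this as the only non-formal step; your induction makes the lemma self-contained at the cost of a few extra lines, while the paper's citation is shorter but imports a black box.
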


\begin{proof}

  Note that $\Hom_A(S_i,A)=0$ if and only if $S_i$ does not appear in the socle of~$A$, which is equivalent to the injective envelope $I(S_i)=D(Ae_i)$ of $S_i$ being non-projective (here we use that the injective envelope of~$A$ is projective-injective for every Nakayama algebra, see for example~\cite[Theorem~32.2]{AnFul}).  This translates into the condition $d_{i+1} > d_i$ by using \Cref{indinjnak}.

  \medskip

  For the second property, note that $\Ext_A^1(S_i,A)=0$ if and only if $\Ext_A^1(S_i,e_r A)=0$ for every indecomposable non-injective module $e_r A$.  Thus, suppose that $e_r A$ is non-injective, then
  $$0 \rightarrow e_r A \rightarrow D(A e_{r+c_r-1}) \rightarrow D(Ae_{r-1})$$
  is the beginning of a minimal injective coresolution of a non-injective $e_rA$, see for example~\cite[Preliminaries]{Mar}.
  \Cref{extformula} entails that $\Ext_A^1(S_i,e_r A)\neq 0$ if and only if there is an injection $S_i\to D(Ae_{r-1})$.  Since $S_i = \ind_{i,1}$ and $D(Ae_{r-1}) = \ind_{r-d_{r-1}, d_{r-1}}$, such an injection exists if and only if $i=r-1$.
\end{proof}

\begin{lemma}
\label{lemmahomandext2}
  We have the following two properties for a simple non-projective module~$S_i$:
 \begin{enumerate}
    \item If $\Hom_A(S_i,A)=0$ and~$\pd(S_i) = 1$, then
    \[
      \dim\Ext_A^1(S_i,A)=1 \Leftrightarrow d_{i+1} = d_i +1.
    \]

    \item If $\Hom_A(S_i,A) = \Ext_A^1(S_i,A) = 0$ and~$\pd(S_i) = 2$, then
    \[
      \dim\Ext_A^2(S_i,A)=1 \Leftrightarrow d_{i+1} + 1 = d_i + d_{i+c_i}.
    \]
  \end{enumerate}
\end{lemma}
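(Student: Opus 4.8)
The plan is to compute both $\Ext$-groups directly as the cohomology of the complex obtained by applying the contravariant functor $\Hom_A(-,A)$ to a \emph{minimal} projective resolution of $S_i$, and then to extract the dimensions by rank--nullity once the vanishing hypotheses are fed in. The essential input is the explicit beginning of the minimal projective resolution already assembled in the proof of \Cref{smallprojdim} by splicing the two displayed short exact sequences. When $\pd(S_i)=1$, the module $e_iJ\cong e_{i+1}A$ is projective and the resolution is
\[
  0 \to e_{i+1}A \xrightarrow{\alpha} e_iA \to S_i \to 0,
\]
while when $\pd(S_i)=2$, the module $e_{i+1}J^{c_i-1}\cong e_{i+c_i}A$ is projective and the resolution is
\[
  0 \to e_{i+c_i}A \xrightarrow{\beta} e_{i+1}A \xrightarrow{\alpha} e_iA \to S_i \to 0.
\]

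Applying $\Hom_A(-,A)$ and using the standard identification $\Hom_A(e_jA,A)\cong Ae_j$ of left $A$-modules, together with the fact $\dim_\FF Ae_j = d_j$ built into the definition of the coKupisch series, turns these into the cochain complexes
\[
  0 \to Ae_i \xrightarrow{\alpha^*} Ae_{i+1} \to 0
  \qquad\text{and}\qquad
  0 \to Ae_i \xrightarrow{\alpha^*} Ae_{i+1} \xrightarrow{\beta^*} Ae_{i+c_i} \to 0,
\]
placed in cohomological degrees $0,1$ (resp.\ $0,1,2$). Since $\Ext_A^k(S_i,A)$ is the $k$-th cohomology of such a complex---the classical computation, consistent with \Cref{extformula}---we read off $\Ext_A^0(S_i,A)=\ker\alpha^*$, then $\Ext_A^1(S_i,A)=\operatorname{coker}\alpha^*$ in the length-one case and $\Ext_A^1(S_i,A)=\ker\beta^*/\operatorname{im}\alpha^*$, $\Ext_A^2(S_i,A)=\operatorname{coker}\beta^*$ in the length-two case.

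Now I would simply feed in the hypotheses. For part~(1), $\Hom_A(S_i,A)=\ker\alpha^*=0$ makes $\alpha^*$ injective, so $\dim\Ext_A^1(S_i,A)=\dim\operatorname{coker}\alpha^*=d_{i+1}-d_i$, which equals $1$ exactly when $d_{i+1}=d_i+1$. For part~(2), $\Hom_A(S_i,A)=0$ again gives $\operatorname{rank}\alpha^*=d_i$, and $\Ext_A^1(S_i,A)=0$ forces $\ker\beta^*=\operatorname{im}\alpha^*$, hence $\dim\ker\beta^*=d_i$ and $\operatorname{rank}\beta^*=d_{i+1}-d_i$. Therefore
\[
  \dim\Ext_A^2(S_i,A)=\dim Ae_{i+c_i}-\operatorname{rank}\beta^*=d_{i+c_i}-(d_{i+1}-d_i)=d_i+d_{i+c_i}-d_{i+1},
\]
which equals $1$ precisely when $d_{i+1}+1=d_i+d_{i+c_i}$, as claimed.

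The computation is essentially formal, so I do not expect a genuine obstacle; the only points requiring care are the module-theoretic identification $\Hom_A(e_jA,A)\cong Ae_j$ with the dimension count $\dim_\FF Ae_j=d_j$, and the compatibility of the Auslander--Reiten definition of $\Ext$ used in this paper with its computation as the cohomology of $\Hom_A(P_\bullet,A)$. The latter is standard and is already implicit in \Cref{extformula} (for a minimal resolution the differentials of $\Hom_A(P_\bullet,S)$ vanish against a simple $S$), so I would invoke it rather than reprove it. Everything else reduces to rank--nullity.
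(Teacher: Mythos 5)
Your proposal is correct and is essentially the paper's own argument: the paper applies $\Hom_A(-,A)$ to the two spliced short exact sequences, identifies $\dim\Hom_A(e_jA,A)=\dim Ae_j=d_j$, and reads off $\dim\Ext_A^1(S_i,A)=d_{i+1}-d_i$ and $\dim\Ext_A^2(S_i,A)=d_i+d_{i+c_i}-d_{i+1}$ by the same dimension count, using $\Ext_A^1(e_iJ,A)\cong\Ext_A^2(S_i,A)$ where you instead take cohomology of the assembled Hom-complex. The two presentations differ only in bookkeeping, and your appeal to the compatibility of the Auslander--Reiten definition of $\Ext$ with the projective-resolution computation matches the paper's own implicit use of that equivalence.
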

\begin{proof}
  For the first property, we apply the left exact functor $\Hom_A(-,A)$ to the short exact sequence
  \[
    0 \rightarrow e_i J \rightarrow e_i A \rightarrow S_i \rightarrow 0
  \]
  and use that $e_iJ \cong e_{i+1}A$ (since~$S_i$ is assumed to have projective dimension equal to one). We obtain the exact sequence
  \[
    0 \rightarrow \Hom_A(S_i,A) \rightarrow \Hom_A(e_iA,A) \rightarrow \Hom_A(e_{i+1}A ,A) \rightarrow \Ext_A^1(S_i,A) \rightarrow 0.
  \]
  Comparing dimensions and using $\Hom_A(S_i,A)=0$, we obtain the condition
  \begin{align*}
    1 &= \dim\Ext_A^1(S_i,A) \\
      &= \dim\Hom_A(S_i,A)+\dim\Hom_A(e_{i+1}A,A)-\dim\Hom_A(e_iA,A) \\
      &= \dim(A e_{i+1})-\dim(A e_i) \\
      &= d_{i+1}-d_i.
  \end{align*}

  For the second property, we apply the left exact functor $\Hom_A(-,A)$ to the short exact sequence $$0 \rightarrow e_i J \rightarrow e_i A \rightarrow S_i \rightarrow 0,$$
  and we obtain the exact sequence
  $$0 \rightarrow \Hom_A(S_i,A) \rightarrow \Hom_A(e_iA,A) \rightarrow \Hom_A(e_i J ,A) \rightarrow \Ext_A^1(S_i,A) \rightarrow 0.$$
  The condition $\Ext_A^1(S_i,A)=0$, together with $\Hom_A(S_i,A)=0$, is equivalent to
  \[
    \Hom_A(e_iA,A) \cong \Hom_A(e_i J ,A),
  \]
  which translates into the condition $\dim\Hom_A(e_i J ,A)=d_i$.
  Now we apply the functor $\Hom_A(-,A)$ to the short exact sequence
  $$0 \rightarrow e_{i+1} J^{c_i-1} \rightarrow e_{i+1} A \rightarrow e_i J \rightarrow 0,$$
  where we use that $e_{i+1} J^{c_i-1} \cong e_{i+c_i}A$ is projective since $S_i$ is assumed to have projective dimension equal to two.
  We obtain the exact sequence
  $$0 \rightarrow \Hom_A(e_i J, A) \rightarrow \Hom_A(e_{i+1}A,A) \rightarrow \Hom_A(e_{i+c_i}A,A) \rightarrow \Ext_A^1(e_iJ,A) \rightarrow 0.$$
  Now note that $\Ext_A^1(e_iJ,A) \cong \Ext_A^1(\Omega^1(S_i),A) \cong \Ext_A^2(S_i,A)$.
  Comparing dimensions we obtain
  \begin{align*}
  1&=\dim\Ext_A^2(S_i,A)\\
  &=\dim\Hom_A(e_i J, A)+\dim\Hom_A(e_{i+c_i}A,A)-\dim\Hom_A(e_{i+1}A,A)\\
  &=d_i+d_{i+c_i}-d_{i+1}. \qedhere
  \end{align*}
\end{proof}

\begin{proof}[Proof of \Cref{regchara}]
  The description of $1$-regular simple modules is a direct consequence of the respective first items in \Cref{smallprojdim} and \Cref{lemmahomandext1,,lemmahomandext2}.
  These lemmas also give that~$S_i$ is $2$-regular if and only if
  \begin{equation}
    \label{eq:2regreduction}
    \begin{aligned}
      c_i       < c_{i+1} + 1 &= c_i + c_{i+c_i}, \\
      d_i+2 = d_{i+1} + 1 &= d_i+d_{i+c_i}.
    \end{aligned}
  \end{equation}

  We simplify these conditions as follows.
  The first condition implies that there are $c_{i+1} + 1 - c_i = c_{i+c_i} > 0$ horizontal steps with $x$-coordinate $i+1$ in the (possibly periodic) Dyck path corresponding to~$A$.
  Thus there is a valley at~$(i+1,i+c_i-1)$.
  The second condition implies $d_{i+c_i} = 2$.
  Therefore, the valley is on the main diagonal, which in turn implies that $c_i=2$.
  Conversely,
   \begin{align*}
      c_i + c_{i+c_i} &= 2 + c_{i+2} = c_{i+1} + 1\\
      d_i + d_{i+c_i} &= d_i + 2 = d_{i+1} + 1. \qedhere
   \end{align*}
\end{proof}

\begin{example}
  Let~$A$ be the $5$-LNakayama algebra with Kupisch series $[4,3,2,2,1]$ and coKupisch series $[1,2,3,4,2]$.
  By \Cref{smallprojdim}, the simple modules $S_0, S_1$ and $S_3$ have projective dimension $1$ and the simple module $S_2$ has projective dimension $2$.  The simple module $S_4$ is projective.
  To see that $S_0$ and $S_1$ are $1$-regular while $S_3$ is not, we compute
  \[
  d_1-d_0= d_2-d_1 = 1 \neq d_4-d_3.
  \]
  Moreover, $S_2$ is $2$-regular because
  \[
    c_2 = d_4 = 2 \quad\text{and}\quad c_3 - c_4 = d_3 - d_2 = 1.
  \]
\end{example}

\subsection{Description in terms of Dyck path statistics for LNakayama algebras}

Based on \Cref{regchara}, we obtain combinatorial reformulations of $1$- and $2$-regularity in terms of Dyck paths.  The first of these is a direct translation into the language of Dyck paths, the second uses a classical involution on Dyck paths to obtain a more local description, and the third uses another bijection which yields a completely local description in terms of two classical statistics.

\begin{theorem}
\label{thm:Dyck-regular}
  Let~$A$ be an~$n$-LNakayama algebra and let~$D$ be its corresponding Dyck path of semilength $n-1$.
  Let~$\widehat D$ be the path obtained from $D$ by adding a horizontal step from $(0,-1)$ to $(0,0)$ and a vertical step from $(n-1,n-1)$ to $(n,n-1)$.
  Then the $A$-module
  \begin{enumerate}
  \item\label{it:Dyck-1-regular} $S_i$ is $1$-regular if and only if $\widehat D$ has a double rise with $y$-coordinate~$i$ and a double fall with $x$-coordinate~$i+1$.
  \item\label{it:Dyck-2-regular} $S_i$ is $2$-regular if and only if $\widehat D$ has a vertical step with final coordinates $(i+1,i+1)$, a double rise with $y$-coordinate~$i$ and a double fall with $x$-coordinate~$i+2$.
  \end{enumerate}
\end{theorem}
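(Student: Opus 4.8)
The plan is to derive \Cref{thm:Dyck-regular} from the algebraic characterization in \Cref{regchara} by setting up a dictionary between the local features of~$\widehat D$ and the entries of the Kupisch series $[c_0,\dots,c_{n-1}]$ and coKupisch series $[d_0,\dots,d_{n-1}]$ of~$A$. The key preliminary observation is that $\widehat D$ arises from~$D$ by prepending one horizontal step at $(0,-1)$ and appending one vertical step at $(n,n-1)$, and that both new endpoints lie on the line $y=x-1$. Measuring area and coarea of $\widehat D$ against this shifted diagonal, I would first verify the identities $\widehat c_i=c_i+1$ and $\widehat d_i=d_i+1$ for $0\le i\le n-1$, together with the two extra boundary values $\widehat c_n=1$ and $\widehat d_{-1}=1$; equivalently, translating $\widehat D$ upward by one produces a genuine Dyck path of semilength~$n$ carrying these area and coarea sequences.

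Next I would feed these shifted sequences into the characterizations of \Cref{def:Dyck-statistics}. A double rise of $\widehat D$ with $y$-coordinate~$i$ requires $\widehat d_i>1$ and $\widehat d_{i+1}-\widehat d_i=1$; since $\widehat d_i=d_i+1\ge 2$ the first clause is automatic and this collapses to the single condition $d_{i+1}-d_i=1$. In the same way a double fall at $x$-coordinate~$i+1$ collapses to $c_i-c_{i+1}=1$, the side condition $\widehat c_{i+1}>1$ being automatic. The whole point of passing from~$D$ to~$\widehat D$ is to suppress the two boundary exceptions that~$D$ itself carries, namely $d_0=1$ and $c_{n-1}=1$: the prepended horizontal step produces the (always present) double rise at $y=0$, matching the identity $d_1-d_0=1$, and the appended vertical step produces the double fall at $x=n-1$, matching $c_{n-2}-c_{n-1}=1$. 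Finally, a vertical step of $\widehat D$ ending at $(i+1,i+1)$ lands on the main diagonal $y=x$, on which $\widehat D$ agrees with~$D$, so it is precisely a return of~$D$ at position~$i+1$; by \Cref{def:Dyck-statistics} this occurs if and only if $c_i=2$.

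With the dictionary in place, both parts become direct comparisons. For~\eqref{it:Dyck-1-regular}, the conjunction of a double rise at $y=i$ and a double fall at $x=i+1$ is, by the above, exactly $d_{i+1}-d_i=1$ and $c_i-c_{i+1}=1$, which is \Cref{regchara}\eqref{it:1reg}. For~\eqref{it:Dyck-2-regular}, the three features translate into $c_i=2$, $d_{i+1}-d_i=1$ and $c_{i+1}-c_{i+2}=1$. This is \Cref{regchara}\eqref{eq:2regchara} with only the proviso $d_{i+2}=2$ missing, and I would close this gap using the Kupisch--coKupisch relation~\eqref{eq:KupischCoKupisch}: from $c_{i+1}=c_{i+2}+1\ge 2$ the candidate $k=1$ is ruled out, while $c_i=2$ makes $k=2$ admissible, forcing $d_{i+2}=\min\{k\mid k\ge c_{i+2-k}\}=2$.

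I expect the only real work to lie in the boundary bookkeeping rather than in any single hard step. Concretely, I would need to confirm that $d_i\ge 2$ for all $i\ge 1$ and $c_i\ge 2$ for all $i\le n-2$, so that the ``${>}1$'' side conditions of \Cref{def:Dyck-statistics} are automatic exactly away from the two modified ends, and then check that the indices excluded from the statement behave correctly: the projective simple $S_{n-1}$, and, for part~\eqref{it:Dyck-2-regular}, the index $i=n-2$, where $S_{n-2}$ has projective dimension~$1$ and, correspondingly, $\widehat D$ has no double fall at $x=n$. Making sure these degenerate cases produce neither missing nor spurious solutions is where a careless argument would slip, but none of it is conceptually deep once the shifted area and coarea sequences of $\widehat D$ are nailed down.
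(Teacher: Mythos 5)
Your argument is correct and takes the same route as the paper, whose entire proof is the single sentence that the statement is immediate from \Cref{regchara}; you merely make that deduction explicit by translating the inequalities on the Kupisch and coKupisch series into the local features of~$\widehat D$ via \Cref{def:Dyck-statistics}. The boundary bookkeeping you flag (the automatic double rise at $y=0$ and double fall at $x=n-1$ created by the two added steps, the redundancy of $d_{i+2}=2$ via \Cref{eq:KupischCoKupisch}, and the degenerate indices $i=n-2$, $n-1$) is exactly the content the paper leaves implicit, and you handle it correctly.
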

\begin{proof}
  This is immediate from \Cref{regchara}.
\end{proof}

\begin{figure}
  \centering
  \resizebox{.6\textwidth}{!}{
  \begin{tikzpicture}[scale=0.75]
    \draw[dotted] (0, 0) grid (14, 14);
    \draw[dotted] (0, 0) -- (14, 14);
    \draw[->] (15, 15) -- node[above] {$y$} (14,15);
    \draw[->] (15, 15) -- node[right] {$x$} (15,14);
\drawPath%
{1, 1, 0, 0, 1, 1, 1, 0, 0, 1, 1, 1, 0, 1, 0, 1, 0, 0, 0, 1, 0, 1, 1, 0, 0, 0, 1, 0} 
{1}{black}{(0,0)}{1}
\drawPath%
{1, 0, 1, 1, 0, 0, 1, 1, 1, 1, 0, 1, 0, 0, 1, 1, 1, 0, 1, 0, 0, 0, 0, 0, 1, 1, 0, 0} 
{0}{blue}{(0,0)}{2}
\drawLK{0/1/1/2,2/3/3/5,2/4/7/10,4/6/8/10,4/7/11/13,10/12/12/13} 
\drawPeaksValleys%
{1/1,3/2,7/4,8/5,11/7,12/8,14/13,2/3,4/6,5/9,6/10,9/11,10/12,13/14} 
  \end{tikzpicture}
  }
  \caption{A mirrored Dyck path~$D$ (in black, thin), its Lalanne-Kreweras
    involution (in blue) and the permutation (indicated by
    crosses) of the $321$-avoiding permutation obtained by applying
    the Billey-Jockusch-Stanley bijection.}
  \label{fig:Lalanne-Kreweras}
\end{figure}
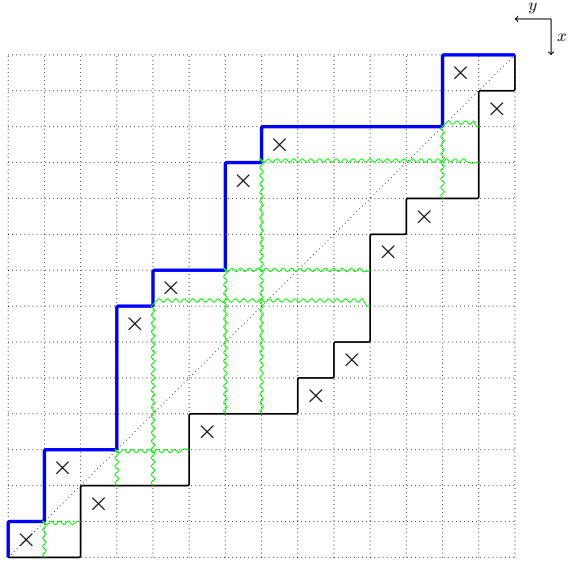

As announced, the second reformulation uses a classical involution which we now recall.
\begin{definition}[\cite{Lalanne, Kreweras}]\label{dfn:LK}
  The \Dfn{Lalanne-Kreweras involution}\footnote{\FindStat{Mp00120}} $\LK$ on Dyck paths of
  semilength~$n$ is the following map:
  \begin{enumerate}
  \item Mirror the Dyck path~$D$ to obtain a path below the main diagonal, from the top
    right to the bottom left.
  \item Draw a vertical line emanating from the midpoint of each
    double horizontal step and a horizontal line emanating from the
    midpoint of each double vertical step.
  \item Mark the intersections of the~$i$-th vertical and the~$i$-th
    horizontal line for each~$i$.
  \item Then $\LK(D)$ is the Dyck path of semilength~$n$ whose
    valleys are the marked points.
  \end{enumerate}
\end{definition}

In \Cref{fig:Lalanne-Kreweras}, the Lalanne-Kreweras involution of the mirrored black path yields the blue path.
The vertical and horizontal lines drawn in the second step are coloured green.
The black crosses should be ignored for now.

\begin{figure}
  \centering
  \resizebox{.6\textwidth}{!}{
  \begin{tikzpicture}[scale=0.75]
    \draw[dotted] (0, 0) grid (14, 14);
    \draw[dashed] (0, 1) -- (13, 14);
    \draw[dotted] (0, 0) -- (14, 14);
    \draw[->] (16, 15) -- node[above] {$y$} (15,15);
    \draw[->] (16, 15) -- node[right] {$x$} (16,14);
\drawPath%
{1, 1, 0, 0, 1, 1, 1, 0, 0, 1, 1, 1, 0, 1, 0, 1, 0, 0, 0, 1, 0, 1, 1, 0, 0, 0, 1, 0} 
{1}{black}{(0,0)}{1}
\drawPath%
{1, 0, 1, 1, 0, 0, 1, 1, 1, 1, 0, 1, 0, 0, 1, 1, 1, 0, 1, 0, 0, 0, 0, 0, 1, 1, 0, 0} 
{0}{blue}{(0,0)}{2}
\drawRegular%
{1/0,7/6,8/7} 
{3/1,14/12} 
{3, 2, 4, 3, 5, 5, 5, 4, 3, 3, 4, 3, 2, 2, 1} 
{2, 4, 3, 3, 5, 5, 5, 4, 3, 4, 3, 2, 3, 2, 1} 
  \end{tikzpicture}
  }
  \caption{A complete example for \Cref{thm:LK}.  The mirrored Dyck path $D$,
    drawn below the main diagonal, is black and thin, its Lalanne-Kreweras
    involution is blue. The area sequence for~$D$ is at the bottom,
    the coarea sequence for~$D$ at the right hand side.  The
    $1$-regular modules of the corresponding Nakayama algebra are
    $S_6$, $S_7$ and $S_{13}$.  Corresponding $1$-cuts are marked
    with a red circle. The $2$-regular modules are $S_0$ and
    $S_{11}$. Corresponding $2$-hills are marked with a red diamond.}
  \label{fig:regular-on-Dyck-paths}
\end{figure}
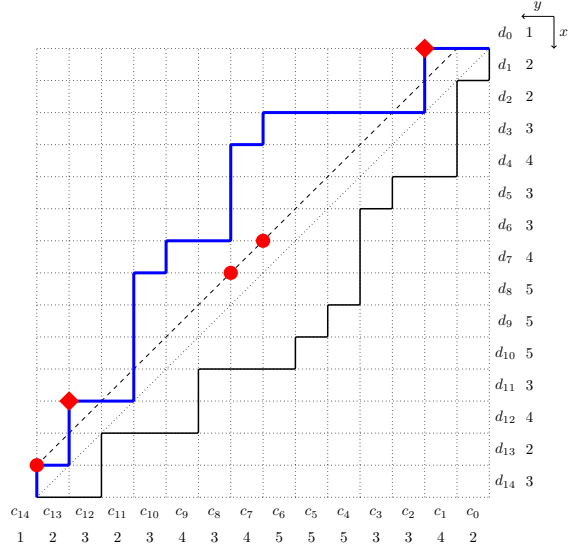
\begin{theorem}
\label{thm:LK}
  Let~$A$ be an~$n$-LNakayama algebra, let~$D$ be
  the Dyck path corresponding to~$A$ and let $E = \LK(D)$ be the
  image of~$D$ under the Lalanne-Kreweras involution.  Then the $A$-module
  \begin{enumerate}
  \item $S_i$ is $1$-regular if and only if $E$ has a $1$-cut at
    position~$i$.
  \item $S_i$ is $2$-regular if and only if $E$ has a $2$-hill at
    position~$i$.
  \end{enumerate}
\end{theorem}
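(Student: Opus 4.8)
The plan is to deduce the statement from the already established \Cref{thm:Dyck-regular} by analysing how the Lalanne--Kreweras involution of \Cref{dfn:LK} transports the relevant local features. By \Cref{thm:Dyck-regular} it is enough to prove two purely combinatorial equivalences: that $E=\LK(D)$ has a $1$-cut at position~$i$ exactly when $\widehat{D}$ has both a double rise with $y$-coordinate~$i$ and a double fall with $x$-coordinate~$i+1$, and that $E$ has a $2$-hill at position~$i$ exactly when $\widehat{D}$ has a double rise with $y$-coordinate~$i$, a double fall with $x$-coordinate~$i+2$, and in addition a return at position~$i+1$. Thus the entire content is to understand the image under $\LK$ of the double rises and double falls of the padded path.

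The key step is a structural description of the valleys of $E=\LK(D)$ in terms of $D$ itself. Unwinding \Cref{dfn:LK}, after mirroring $D$ below the diagonal the vertical lines are emitted precisely by the midpoints of the double rises of $D$ and the horizontal lines precisely by the midpoints of the double falls; since a Dyck path has equally many of each, the two families have the same cardinality. I would show that, when both families are ordered along the free coordinate, the $k$-th vertical and the $k$-th horizontal line meet at the point whose two coordinates are the $y$-coordinate of the $k$-th double rise and the $x$-coordinate of the $k$-th double fall, and that these marked points are exactly the valleys of $E$. Equivalently: one sorts the heights of the double rises of $D$ and the abscissae of its double falls, and pairs them in increasing order; the resulting pairs are the coordinates of the valleys of $E$. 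That this order-preserving matching really yields the valley set of a bona fide Dyck path (each pair lying weakly above the diagonal) is the classical content making $\LK$ well defined, and I would re-derive it by peeling off the outermost matched pair and inducting.

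With this description in hand, the two equivalences become a local inspection of $E$ near the diagonal, once the passage from $D$ to $\widehat{D}$ is accounted for. Prepending a horizontal step and appending a vertical step enlarges the double rises of $D$ by a single one at height $y=0$ and the double falls by a single one at abscissa $x=n-1$, so that the extreme indices are treated on the same footing as the interior ones. I would then show that the conjunction of a double rise at $y=i$ and a double fall at $x=i+1$ in $\widehat{D}$ is equivalent to $E$ having a peak or a valley at the point $(i,i+1)$, one unit above the diagonal; since such a corner is exactly the coincidence of a horizontal step with $x$-coordinate~$i$ and a vertical step with $y$-coordinate~$i+1$, this is precisely a $1$-cut at position~$i$. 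The verification uses the valley coordinates of the previous step, distinguishing the case where $(i,i+1)$ is itself a valley of $E$ from the case where it is a peak forced by a neighbouring diagonal contact. For the $2$-hill, the additional return at position~$i+1$ pins the relevant excursion to the diagonal: together with the double rise at $y=i$ and the double fall at $x=i+2$ it forces $E$ to leave the diagonal at $(i,i)$, peak at $(i,i+2)$, and return at $(i+2,i+2)$ by two horizontal steps followed by two vertical steps, which is a $2$-hill at position~$i$; the return is exactly what forbids an intervening valley that would split the hill into two separate $1$-cuts.

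I expect the principal difficulty to lie in the structural step: proving rigorously that the Lalanne--Kreweras construction pairs double rises and double falls in increasing order of their free coordinate and that the marked intersections are the valleys of the image, while keeping the nonstandard coordinate conventions consistent across $D$, its mirror image, and the padded path $\widehat{D}$. The boundary accounting---checking that the single added double rise and double fall correctly reproduce the admissible extreme indices $i=0$ and $i=n-1$---is the second place where care is required, in particular because for these indices the paired valley of $E$ sits on the diagonal rather than one unit above it. Once both points are settled, the equivalences with $1$-cuts and $2$-hills, and hence with $1$- and $2$-regularity through \Cref{regchara} and \Cref{thm:Dyck-regular}, follow by direct inspection of the step configurations of $E$ along the diagonal.
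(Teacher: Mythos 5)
Your overall strategy is the paper's: translate $1$- and $2$-regularity into double rises and double falls via \Cref{regchara} (equivalently \Cref{thm:Dyck-regular}), and then track these through the Lalanne--Kreweras construction using the fact that the valleys of $E=\LK(D)$ are the order-preserving matchings of the lines emanating from the double steps. However, your key step for part (1) is false: a $1$-cut of $E$ at position~$i$ is \emph{not} equivalent to $E$ having a peak or a valley at the point $(i,i+1)$. By \Cref{def:Dyck-statistics}, a $1$-cut only asks for the \emph{existence}, anywhere in $E$, of a horizontal step with $x$-coordinate~$i$ and of a vertical step with $y$-coordinate~$i+1$; equivalently, for a peak with $x$-coordinate~$i$ and a peak with $y$-coordinate~$i+1$, and these two peaks need not coincide or even be adjacent. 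Concretely, take $E$ of semilength~$5$ with step sequence $hhhvhvhvvv$, i.e.\ peaks at $(0,3)$, $(1,4)$, $(2,5)$ and valleys at $(1,3)$, $(2,4)$. Then $E$ has a horizontal step with $x$-coordinate~$2$ (from $(2,4)$ to $(2,5)$) and a vertical step with $y$-coordinate~$3$ (from $(0,3)$ to $(1,3)$), hence a $1$-cut at position~$2$, but $E$ never visits the point $(2,3)$, so it has neither a peak nor a valley there. Since the theorem is true, your intermediate equivalence ``double rise at $y=i$ and double fall at $x=i+1$ in $\widehat D$ $\Leftrightarrow$ corner of $E$ at $(i,i+1)$'' must also fail for the preimage $D=\LK(E)$ of this path, so the error does not cancel out: your argument would wrongly declare $S_2$ not $1$-regular here.

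The repair is exactly the decoupling the paper performs. Each horizontal line of the $\LK$ construction (i.e.\ each double vertical step of the mirrored~$D$, i.e.\ each index with $d_{i+1}-d_i=1$) produces a peak of $E$ with that $x$-coordinate, and each vertical line (each double horizontal step, i.e.\ each index with $c_i-c_{i+1}=1$) produces a peak of $E$ with that $y$-coordinate, regardless of which line it is matched with; conversely every non-boundary peak of $E$ arises from exactly one line of each kind. Hence the $1$-cut condition at position~$i$ is equivalent to the simultaneous presence of the two lines at $x=i$ and $y=i+1$, which is \Cref{regchara}\eqref{it:1reg} --- no claim about their intersection being a marked point (or even a lattice point of $E$) is needed. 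The point $(i,i+1)$ on the diagonal $y=x+1$ only enters as the locus where such a pair of lines \emph{would} intersect, not as a corner of $E$. For part (2) your idea that the return of $D$ at $i+1$ equalizes the counts of double rises and double falls and thereby pins the matching is correct and is what the paper uses; but as written it leans on the same flawed local identification, so it needs to be rephrased in terms of returns of $E$ at positions $i$ and $i+2$ rather than corners.
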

\begin{proof}
  Suppose first that $S_i$ is $1$-regular.  By \Cref{regchara}(1),
  $c_i - c_{i+1} = d_{i+1} - d_i = 1$.  Because of
  $c_i -c_{i+1} = 1$, there is a double horizontal step in the path
  below the diagonal, whose midpoint has $y$-coordinate $i+1$.
  Because of $d_{i+1} - d_i = 1$ there is a double vertical step
  whose midpoint has $x$-coordinate~$i$.  The corresponding vertical
  and horizontal lines (coloured green in
  \Cref{fig:Lalanne-Kreweras}) intersect at the diagonal $y = x+1$,
  dashed in \Cref{fig:regular-on-Dyck-paths}.

  By the definition of the Lalanne-Kreweras involution, the Dyck path
  $\LK(D)$ has a valley at the end of every green line.  Therefore,
  for each vertical line there is a peak of $\LK(D)$ with the same
  $y$-coordinate as the line, and for each horizontal line there is a
  peak at the same $x$-coordinate as the line.  Specifically, for two
  green lines intersecting at $(i,i+1)$, there is a peak
  corresponding to the vertical line with $y$-coordinate~$i+1$, and a
  peak corresponding to the horizontal line with $x$-coordinate ~$i$,
  that is, a $1$-cut.

  Conversely, if there are two such peaks, the two corresponding
  vertical and horizontal lines intersect at the diagonal $y=x+1$,
  implying that $S_i$ is $1$-regular.

  Let us now show that $2$-regular modules correspond to hills of
  size~$2$.  We begin by noting that a hill of size~$2$ at position $i$ in $\LK(D)$ forces
  the conditions on~$D$ in \Cref{regchara}(2).  Suppose for simplicity that the $2$-hill is neither at the beginning nor at the end of $\LK(D)$, the argument is easy to adapt for these two degenerate cases.  Since $\LK(D)$ has a return at position $i$ and no return at position $i+1$, the number of double falls equals the number of double rises after the first $2(i+1)$ steps of $D$, which implies that $D$ has a return at $i+1$.  Thus, $c_{i} = d_{i+2} = 2$.  Because $\LK(D)$ has a return at position $i$, the path below the diagonal has a double vertical step whose midpoint has $x$-coordinate $i$, which translates into the condition $d_{i+1}-d_i=1$ on the coarea sequence of $D$.  Similarly, because of the return of $\LK(D)$ at position $i+2$, the path below the diagonal has a double horizonatal step whose midpoint has $y$-coordinate $i+2$, which translates into the condition $c_{i+1}-c_{i+2}=1$ on the area sequence of $D$.

  Conversely, the conditions $c_i = d_{i+2} = 2$ imply that the mirrored Dyck
  path~$D$ below the diagonal has a return to the diagonal with $x$-
  and $y$-coordinate $i+1$.

  Let us ignore the degenerate cases where~$D$ begins
  or ends with a $1$-hill.  Then, the horizontal line emanating from the midpoint of the
  double vertical step forced by $d_{i+1} = d_i + 1$ must be matched
  with the vertical line emanating from the midpoint of the last
  double horizontal step before - to the right and above - the return
  to the diagonal.  Thus, the intersection of these two lines is on
  the diagonal of~$D$.

  Similarly, the vertical line emanating from the midpoint of the
  double horizontal step forced by $c_{i+1} = c_{i+2} + 1$ must be
  matched with the horizontal line emanating from the first double
  vertical step after the return to the diagonal, and their
  intersection is on the diagonal.  Finally, we observe that the
  distance between these two intersections is $2$.
\end{proof}

In the following we describe a bijection on Dyck paths that yields an
even simpler description of the $1$- and $2$-regular simple modules.
The main ingredient is the Billey-Jockusch-Stanley bijection, which is closely
related to the Lalanne-Kreweras involution:%
\begin{definition}[\cite{BJS}]\label{dfn:BJS}
  A \Dfn{$321$-avoiding permutation} is a permutation $\pi$ such that there is no triple $i < j < k$ with $\pi(k) < \pi(j) < \pi(i)$.
  The \Dfn{Billey-Jockusch-Stanley bijection}\footnote{\FindStat{Mp00129}} $\BJS$ sends a Dyck path~$D$ of
  semilength~$n$ to a $321$-avoiding permutation $\pi$ of the numbers
  $\{1,\dots,n\}$ as follows:
  \begin{enumerate}
  \item Mirror the Dyck path~$D$ to obtain a path below the main diagonal, from the top
    right to the bottom left.
  \item Put crosses into the cells corresponding to the valleys of~$D$.
  \item\label{it:LK-peaks} Then, working from right to left, for each column not
    yet containing a cross we put a cross into the top most cell whose
    row does not yet contain a cross.
  \end{enumerate}
  Replacing all crosses with the integer $1$ and filling all other cells with the integer $0$ yields the permutation matrix of the reverse complement of $\pi$.
\end{definition}

Note that one can equivalently fill in the crosses in step~\eqref{it:LK-peaks} from left to right, putting crosses into the lowest available cell.

In \Cref{fig:Lalanne-Kreweras}, the black crosses indicate the
permutation matrix of $\BJS(D)$.  As visible there, we have the
following relation between the Lalanne-Kreweras involution and the
Billey-Jockusch-Stanley bijection:
\begin{proposition}\label{prop:LK-BJS}
  The peaks of $\LK(D)$ are at the positions of the crosses of the
  permutation matrix of $\BJS(D)$ above the main diagonal.
\end{proposition}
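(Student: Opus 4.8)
The plan is to work throughout in the mirrored picture of~$D$ below the main diagonal, which is the common first step of both $\LK$ (\Cref{dfn:LK}) and $\BJS$ (\Cref{dfn:BJS}). In this picture the valleys of~$D$ sit weakly below the diagonal, and they play a double role: they are exactly the forced ``valley crosses'' placed in step~(2) of the $\BJS$ filling, and they are the points anchoring the $L$-shaped lines of the $\LK$ construction. Since every valley cross lies weakly below the diagonal, each cross of $\BJS(D)$ strictly above the diagonal must be produced by the greedy step~(3). Hence it suffices to show that these greedily placed above-diagonal crosses occupy precisely the cells that are peaks of $\LK(D)$.

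First I would pin down the coordinates of the peaks of $\LK(D)$. Since the valleys of $\LK(D)$ are the marked intersection points, its peaks are the outer corners between consecutive valleys, and their coordinates can be read off from the green lines exactly as in the proof of \Cref{thm:LK}: a vertical line emanating from a double horizontal step of~$D$ produces a peak of $\LK(D)$ with the same $y$-coordinate, and a horizontal line emanating from a double vertical step produces a peak with the same $x$-coordinate. Thus the peaks of $\LK(D)$ are controlled by the double steps of~$D$, a peak being determined by the $y$-coordinate of a double horizontal step together with the $x$-coordinate of the double vertical step matched to it by the ``$i$-th with $i$-th'' rule of \Cref{dfn:LK}.

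Next I would analyse the greedy filling of $\BJS$. Once the valley crosses are in place, the rows and columns still free \emph{above} the diagonal are precisely those cut out by the double steps of~$D$: a free column corresponds to a double horizontal step and a free row to a double vertical step. The greedy rule then places a single cross in each such free column, in the topmost available row (equivalently, by the remark following \Cref{dfn:BJS}, in each free row in the lowest available column). The content of the argument is to verify that this placement lands each above-diagonal cross in the cell whose $x$- and $y$-coordinates are exactly those assigned to a peak of $\LK(D)$ in the previous paragraph, that is, to match the two coordinate dictionaries cell by cell.

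The main obstacle is showing that the matching of double horizontal steps with double vertical steps produced implicitly by the global right-to-left sweep of $\BJS$ coincides with the explicit ``$i$-th with $i$-th'' matching of $\LK$. The key lemma is that \emph{both} matchings are the unique order-preserving (equivalently, non-crossing) matching of these two ordered sets of double steps: for $\LK$ this holds by definition, and for $\BJS$ it follows from the fact that $\BJS(D)$ is $321$-avoiding, so that its crosses above the diagonal form an increasing, hence non-crossing, configuration. Once the ground sets of free rows and columns are identified with the double-step coordinates and both matchings are seen to be order-preserving, the matchings agree and comparing coordinates finishes the proof; a glance at \Cref{fig:Lalanne-Kreweras}, where the above-diagonal crosses of $\BJS(D)$ visibly sit at the peaks of the blue path $\LK(D)$, confirms the statement.
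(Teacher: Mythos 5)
Your plan is in essence the paper's proof: after the valley crosses are placed, the free rows and columns are exactly the $x$- and $y$-coordinates of the peaks of $\LK(D)$, and one checks that the greedy step of $\BJS$ realizes the unique order-preserving matching between them. Two of your intermediate claims need repair, though. First, the ``$i$-th vertical with $i$-th horizontal'' rule of \Cref{dfn:LK} produces the \emph{valleys} of $\LK(D)$, not its peaks: writing the peaks as $(i_0,j_0),\dots,(i_k,j_k)$, the $a$-th marked intersection is the valley $(i_a,j_{a-1})$, so each peak takes its $y$-coordinate from one vertical line and its $x$-coordinate from the \emph{preceding} horizontal line, and the extremal coordinates $i_0=0$ and $j_k=n$ come from no double step at all. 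For the same reason your dictionary ``free column $=$ double horizontal step, free row $=$ double vertical step'' is off by one: there are $k+1$ free columns and rows but only $k$ double rises and double falls, and the extra free column $y=n$ and free row $x=0$ are precisely what supply the $(k+1)$-st greedy cross. Second, your justification that the greedy matching is order-preserving via ``$\BJS(D)$ is $321$-avoiding, hence its above-diagonal crosses are increasing'' is not immediate: $321$-avoidance only forbids decreasing \emph{triples}, so one still needs the short pigeonhole argument that the excedances of a $321$-avoiding permutation form an increasing subsequence, and one must also know in advance that the greedily placed crosses are exactly the above-diagonal ones. The paper sidesteps both points by a direct induction on the greedy sweep: once crosses sit at $(i_0,j_0),\dots,(i_{a-1},j_{a-1})$, the topmost free row is $i_a$, so the cross placed in column $j_a$ lands at $(i_a,j_a)$. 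With the indexing corrected and that induction substituted for the appeal to pattern avoidance, your argument coincides with the paper's.
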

\begin{proof}
  Let $(i_0,j_0),\dots,(i_k,j_k)$ with $0=i_0 < \dots < i_k < n$ and $0 < j_0 < \dots < j_k = n$ be the coordinates of the peaks of $\LK(D)$.  Then, by step~(2) of \Cref{dfn:LK} of the Lalanne-Kreweras involution, $D$ has a valley corresponding to a cell in a column just to the right of $y=j$, $0 < j < n$, if and only if $j\not\in\{j_0,\dots,j_k=n\}$.  For the same reason, $D$ has a valley corresponding to a cell in a row just below $x=i$, $0 < i < n$, if and only if $i\not\in\{0=i_0,\dots,i_k\}$.

  Thus, by step~(3) of \Cref{dfn:BJS} of the Billey-Jockusch-Stanley bijection, there are crosses in the cells $(i_0,j_0),\dots,(i_k,j_k)$, because for $0\leq a\leq k$ the row just below $x=i_a$ is the top most row not containing a cross, once crosses have been placed in the cells $(i_0,j_0),\dots,(i_{a-1},j_{a-1})$.
\end{proof}

Our final reformulations of $1$- and $2$-regularity in terms of Dyck paths uses a slightly more involved bijection, but has the advantage of describing $1$- and $2$-regular statistics in a completely local way.
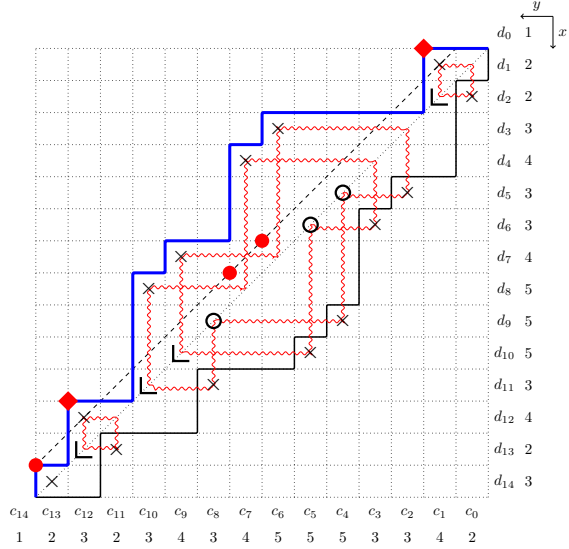
\begin{figure}
  \centering
  \resizebox{.6\textwidth}{!}{
  \begin{tikzpicture}[scale=0.75]
    \draw[dotted] (0, 0) grid (14, 14);
    \draw[dashed] (0, 1) -- (13, 14);
    \draw[dotted] (0, 0) -- (14, 14);
    \draw[->] (16, 15) -- node[above] {$y$} (15,15);
    \draw[->] (16, 15) -- node[right] {$x$} (16,14);
\drawPath%
{1, 1, 0, 0, 1, 1, 1, 0, 0, 1, 1, 1, 0, 1, 0, 1, 0, 0, 0, 1, 0, 1, 1, 0, 0, 0, 1, 0} 
{1}{black}{(0,0)}{1}
\drawPath%
{1, 0, 1, 1, 0, 0, 1, 1, 1, 1, 0, 1, 0, 0, 1, 1, 1, 0, 1, 0, 0, 0, 0, 0, 1, 1, 0, 0} 
{0}{blue}{(0,0)}{2}
\drawCycleDiagram%
{1/1,3/2,7/4,8/5,11/7,12/8,14/13,2/3,4/6,5/9,6/10,9/11,10/12,13/14} 
{2,4,5,13} 
{6,9,10} 
{0/1/1/2,2/3/3/5,2/4/7/10,4/6/8/10,4/7/11/13,10/12/12/13} 
\drawRegular%
{1/0,7/6,8/7} 
{3/1,14/12} 
{3, 2, 4, 3, 5, 5, 5, 4, 3, 3, 4, 3, 2, 2, 1} 
{2, 4, 3, 3, 5, 5, 5, 4, 3, 4, 3, 2, 3, 2, 1} 
  \end{tikzpicture}
  }
  \caption{The cycle diagram of the permutation associated with the
    Dyck path, together with the points indicating the $1$-~and
    $2$-regular modules.  The configurations along the diagonal
    specifying the composition are indicated with black \lq
    L\rq-shapes and circles.}
  \label{fig:EK}
\end{figure}
\begin{figure}
  \centering
  \resizebox{.6\textwidth}{!}{
  \begin{tikzpicture}[scale=0.75]
    \draw[dotted] (0, 0) grid (14, 14);
    \draw[dashed] (0, 1) -- (13, 14);
    \draw[dotted] (0, 0) -- (14, 14);
    \draw[->] (16, 15) -- node[above] {$y$} (15,15);
    \draw[->] (16, 15) -- node[right] {$x$} (16,14);
\drawPath%
{1, 1, 0, 0, 1, 1, 1, 0, 0, 1, 1, 1, 0, 1, 0, 1, 0, 0, 0, 1, 0, 1, 1, 0, 0, 0, 1, 0} 
{1}{black}{(0,0)}{1}
\drawPath%
{1, 0, 1, 1, 0, 0, 1, 1, 1, 1, 0, 1, 0, 0, 1, 1, 1, 0, 1, 0, 0, 0, 0, 0, 1, 1, 0, 0} 
{0}{blue}{(0,0)}{2}
\begin{scope}[shift={(-0.25,0.25)}]
  \drawPath%
  {1,0,1,1,0,0,1,1,1,1,0,1,0,1,1,1,0,0,1,0,0,0,0,0,1,1,0,0}
  {0}{green}{(0,0)}{2}
\end{scope}
\drawRegular%
{1/0,7/6,8/7} 
{3/1,14/12} 
{3, 2, 4, 3, 5, 5, 5, 4, 3, 3, 4, 3, 2, 2, 1} 
{2, 4, 3, 3, 5, 5, 5, 4, 3, 4, 3, 2, 3, 2, 1} 
  \end{tikzpicture}
  }
  \caption{A complete example for \Cref{thm:1-rises-2-hills}.  The
    Dyck path~$D$ is black and thin, its Lalanne-Kreweras involution is blue.
    The area sequence for~$D$ is at the bottom, the coarea sequence
    at the right hand side.  The $1$-regular modules of the
    corresponding Nakayama algebra are $S_6$, $S_7$ and $S_{13}$.
    Corresponding $1$-cuts of $\LK(D)$ are marked with a red
    circle. The $2$-regular modules are $S_0$ and $S_{11}$.
    Corresponding $2$-hills of $\LK(D)$ are marked with a red
    diamond.  The final path, $\phi(D)$, in green, has $1$-rises at $x$-coordinates
    $6$, $7$ and $13$, and $2$-hills at $0$ and $11$.  It is slightly
    shifted to improve visibility.}
  \label{fig:regular-on-Dyck-paths2}
\end{figure}
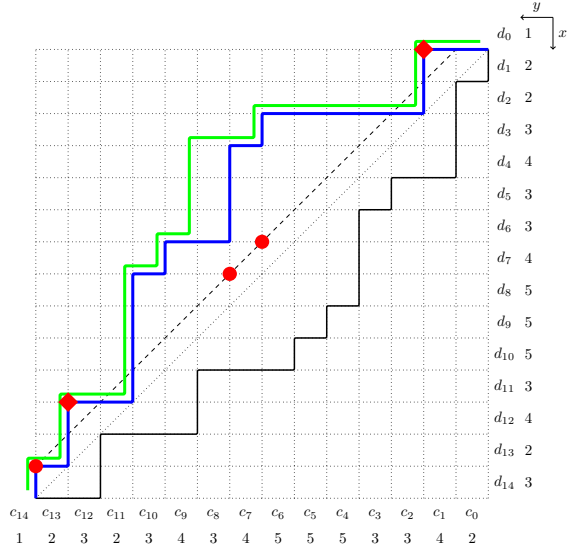
\begin{theorem}
\label{thm:1-rises-2-hills}
  Let~$A$ be an~$n$-LNakayama algebra and let~$D$ be corresponding Dyck path.
  Then there is an explicit bijection~$\phi$, such that the $A$-module
  \begin{enumerate}
    \item\label{it:1reg1rise} $S_i$ is $1$-regular if and only if
      $\phi(D)$ has a $1$-rise with $x$-coordinate~$i$.

    \item $S_i$ is $2$-regular if and only if $\phi(D)$ has a $2$-hill
    at position~$i$.
  \end{enumerate}
\end{theorem}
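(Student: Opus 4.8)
The plan is to reduce everything to the permutation picture already developed for \Cref{thm:LK}, and then to define $\phi$ by reading a new Dyck path off the cycle diagram of the associated permutation. By \Cref{thm:LK} the $1$-regular modules are detected by the $1$-cuts of $\LK(D)$ and the $2$-regular modules by its $2$-hills, and by \Cref{prop:LK-BJS} the peaks of $\LK(D)$ are exactly the above-diagonal crosses of the permutation matrix of $\pi=\BJS(D)$. So I would factor $\phi$ as $\phi=\psi\circ\BJS$, where $\psi$ builds a Dyck path from the cycle diagram of $\pi$. The whole content is to arrange $\psi$ so that the data recording a $1$-cut of $\LK(D)$ is converted into an isolated horizontal step (a $1$-rise) and the data recording a $2$-hill of $\LK(D)$ is converted into a $2$-hill, both at the same position~$i$.

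To construct $\psi$ I would scan the main diagonal of the cycle diagram of $\pi$ from $1$ to $n$ and record at each diagonal point how the arcs meet it. An index $i$ with $\pi(i)>i$ and $\pi^{-1}(i)>i$ is a local minimum of its cycle, drawn as an \lq L\rq-shape; an index with $\pi(i)<i$ and $\pi^{-1}(i)<i$ is a local maximum, drawn as a circle; the remaining indices are pass-through points. This marked sequence of diagonal configurations encodes a composition of~$n$, and $\psi$ would translate it step by step into the horizontal and vertical runs of $\phi(D)$, using the circles (the diagonal returns of the cycle structure) to cut $\phi(D)$ into prime blocks. Because $\BJS$ is a bijection (\Cref{dfn:BJS}) and, for a $321$-avoiding $\pi$, the cycle diagram is reconstructed from its sequence of diagonal configurations, $\psi$ is invertible and hence $\phi$ is a bijection.

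The verification of the two equivalences is then a position-by-position computation anchored in \Cref{regchara}. For $1$-regularity I would start from $c_i-c_{i+1}=d_{i+1}-d_i=1$, translate it through \Cref{prop:LK-BJS} into the statement that $\pi$ has a cross in row~$i$ and a cross in column~$i+1$ above the diagonal, and then trace this configuration through $\psi$ to exhibit a single isolated horizontal step of $\phi(D)$ with $x$-coordinate~$i$. For $2$-regularity I would likewise start from $c_i=d_{i+2}=2$ and $c_{i+1}-c_{i+2}=d_{i+1}-d_i=1$, which by the argument of \Cref{thm:LK} forces a return of the mirrored path with $x$- and $y$-coordinate~$i+1$, and show that $\psi$ turns this into a $2$-hill of $\phi(D)$ at position~$i$. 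The crucial bookkeeping is that the cyclic labelling of coordinates by $0,\dots,n-1$ coming from the simple modules must be carried unchanged through $\BJS$ and $\psi$, so that a $1$-regular $S_i$ really lands at $x$-coordinate $i$ rather than at a shifted index.

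I expect the main obstacle to be exactly this label-tracking together with the degenerate boundary cases. As already in the proof of \Cref{thm:LK}, the configurations in which $D$ begins or ends with a $1$-hill---equivalently, in which the relevant peaks of $\LK(D)$ abut the boundary of the path---must be handled separately, since there an arc of the cycle diagram loses its partner on one side. Checking that $\psi$ remains a bijection and that the position-by-position correspondence survives in these cases is where the argument is most delicate, and is the part I would write out in full.
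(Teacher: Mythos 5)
Your proposal follows essentially the same route as the paper's proof: reduce to \Cref{thm:LK}, pass via \Cref{prop:LK-BJS} to the cycle diagram of the Billey--Jockusch--Stanley permutation, classify the configurations at the diagonal into \lq L\rq-shaped delimiters, circles and pass-through points, read off a (weak) composition, and reassemble a Dyck path whose $1$-rises and $2$-hills sit at the prescribed positions. The one ingredient you should make explicit is the counting identity $2\ell(\alpha)+|\alpha|+c=n$ relating the number of delimiters, the size of the composition and the number of $1$-cuts, since that is what guarantees the reassembled path is again a Dyck path of semilength~$n$.
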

\begin{proof}
  Taking into account \Cref{thm:LK}, it suffices to provide a
  bijection $\psi$ on Dyck paths that preserves hills of size $2$ and
  maps $1$-cuts to $1$-rises.

  Let $E$ be a Dyck path of semilength~$n$.  We first construct
  Elizalde's \lq cycle diagram\rq\ of the $321$-avoiding permutation
  associated with $E$ by the Billey-Jockusch-Stanley bijection~\cite{Elizalde}: for each cross, draw a horizontal and a vertical line connecting the cross with the main diagonal.  For
  the Dyck path $E=\LK(D)$ in \Cref{fig:Lalanne-Kreweras}, this is
  carried out in \Cref{fig:EK}.

  We then record the sequence of configurations of lines emanating
  from the main diagonal of the cycle diagram, and construct a (weak)
  composition $\alpha$ as follows.  Points on the main diagonal with
  both lines being in the upper left of the diagram (as drawn in
  \Cref{fig:EK}) correspond to $1$-cuts, and are ignored.  If the
  horizontal line is in the upper left, and the vertical line in the
  lower right of the diagram, the point is also ignored.

  Of the remaining points, those who have their horizontal line in
  the lower right and their vertical line in the upper left of the
  diagram serve as delimiters of a composition $\alpha$, which we now construct.  In the figure
  these are indicated by black \lq L\rq-shapes.  Thus, by \Cref{prop:LK-BJS}, the number of parts $\ell(\alpha)$ of the composition is the number of peaks of $E$ minus the number of $1$-cuts of $E$.  The~$i$-th part of
  the composition, $\alpha_i$, is the number of points between
  the~$i$-th and the $(i+1)$-st delimiter with both lines in the
  lower right of the diagram.  In the figure these points are
  indicated by black circles. Thus, the composition corresponding to
  the configuration in \Cref{fig:EK} is $\alpha=(0,3,0,0)$.

  Note that the number of points with the horizontal line in the
  upper left and the vertical line in the lower right equals the
  number of points with the horizontal line in the lower right and
  the vertical line in the upper right.  Therefore, we have that
  \begin{equation}
    \label{eq:composition}
    2\ell(\alpha) + |\alpha| + c = n,
  \end{equation}
  where $|\alpha|$ is the sum of the parts of the composition $\alpha$ and $c$ is the number of $1$-cuts of $E$.

  Finally, $\psi(E)$ is the unique Dyck path that has peaks at the
  same $x$-coordinates as $E$, $1$-rises at the $x$-coordinates of
  the $1$-cuts of $E$, and the number of horizontal steps on the remaining $x$-coordinates given
  by adding $2$ to each part of $\alpha$.  This is well defined
  because of \Cref{eq:composition}.
\end{proof}

\begin{theorem}\label{thm:rectangle-return}
  A Dyck path $D$ has a rectangle at $(i+1, j)$ if and only if $\LK(D)$ has a return at position $j+1=i+c_i$, which is not the final step of a $1$-hill.
\end{theorem}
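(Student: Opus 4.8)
The plan is to generalize the green-line analysis already carried out in the proof of \Cref{thm:LK}, which established the more restrictive correspondence between $2$-hills of $\LK(D)$ and $2$-regular simples. First I would translate both sides of the asserted equivalence into conditions on the area and coarea sequences of~$D$. By \Cref{def:Dyck-statistics}, a rectangle at $(i+1,j)$ is a valley at $(i+1,j)$ with $j = i + c_i - 1$ whose next valley has $x$-coordinate exceeding $j+1$; in terms of the area sequence this is precisely $c_{i+1} \ge c_i$ together with $c_{i+1} + 1 = c_i + c_{i+c_i}$, which by \Cref{smallprojdim}\eqref{it:pd2} is exactly the condition $\pd(S_i) = 2$. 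Thus the theorem may be restated as: $\pd(S_i) = 2$ if and only if $\LK(D)$ has a return at position $i + c_i$ that does not complete a $1$-hill.

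Second, I would record the geometric description of $\LK(D)$ underlying \Cref{dfn:LK}: after mirroring~$D$ below the main diagonal, the double falls of~$D$ (positions~$k$ with $c_{k-1} - c_k = 1$) produce the vertical green lines and the double rises of~$D$ (positions~$k$ with $d_{k+1} - d_k = 1$) produce the horizontal green lines, the valleys of $\LK(D)$ being the marked intersections of the $r$-th vertical with the $r$-th horizontal green line. The key structural observation, which I would prove cleanly, is that every non-final return of $\LK(D)$ is in fact a valley of $\LK(D)$ lying on the main diagonal: at a diagonal lattice point the next step of a Dyck path is forced to be horizontal, so a vertical step landing on the diagonal is always immediately followed by a horizontal one. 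Hence the non-final returns of $\LK(D)$ are exactly the marked green-line intersections situated on the main diagonal. This also explains the exclusion in the statement: a return completing a $1$-hill is the marked intersection bounding a length-$1$ arch on its left, which corresponds via \Cref{thm:LK} to a $1$-cut, i.e.\ to a projective dimension~$1$ configuration rather than a rectangle, and these must be removed.

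Third, and this is the heart of the argument, I would pin down the position of the relevant intersection and show that it lands on the diagonal precisely under the rectangle condition. Because the Lalanne--Kreweras matching pairs green lines by their rank along the path rather than by adjacency, the on-diagonal condition cannot be read off locally; instead one must compare the number of double falls and double rises accumulated up to position $i + c_i$. Following the balance computation in the proof of \Cref{thm:LK} (``the number of double falls equals the number of double rises after the first $2(i+1)$ steps''), I would show that the identity $c_{i+1} + 1 = c_i + c_{i+c_i}$ is exactly what makes the matched pair of green lines meet on the main diagonal at $(i+c_i,\,i+c_i)$ --- as opposed to meeting on $y = x + 1$, which is the $1$-cut case appearing in \Cref{thm:LK} --- thereby producing a return of $\LK(D)$ at position $i + c_i$. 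The converse direction runs the same partial-sum bookkeeping backwards: a non-final return at $i + c_i$ not completing a $1$-hill forces the corresponding area-sequence balance, hence the rectangle identity.

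The main obstacle I anticipate is exactly this global matching/counting step: one has to prove that the rectangle identity is equivalent to the on-diagonal (rather than one-off-diagonal) placement of a pair of green lines whose indices are determined only by a running count of double rises and double falls, which requires careful telescoping of the area and coarea partial sums. A secondary nuisance, already flagged in the proof of \Cref{thm:LK}, is the treatment of the degenerate cases where the rectangle or the return occurs at the very beginning or end of the path; these should be handled separately, but by a routine adaptation of the generic argument.
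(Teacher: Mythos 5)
Your reduction of the rectangle condition to $\pd(S_i)=2$ and your observation that every non-final return of $\LK(D)$ is a valley of $\LK(D)$ on the main diagonal, hence a marked green-line intersection, are both correct. However, the heart of your argument --- that the identity $c_{i+1}+1=c_i+c_{i+c_i}$ is \emph{equivalent} to a matched pair of green lines meeting on the diagonal at $(j+1,j+1)$ --- is precisely the content of the theorem, and you defer it as \lq the main obstacle\rq\ without carrying out the telescoping of partial sums of double rises and double falls. The paper sidesteps this global bookkeeping entirely: by \Cref{prop:LK-BJS}, peaks of $\LK(D)$ are crosses of $\BJS(D)$ above the diagonal, and a rectangle at $(i+1,j)$ means $\BJS(D)$ has no crosses below and to the right of $(j+1,j+1)$; the pigeonhole principle then forces the permutation matrix to decompose into blocks at $j+1$, so $\LK(D)$ has no peak straddling that position and must return there. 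If you pursue the green-line route you will in effect be re-deriving this block decomposition by hand. Note also that your dictionary \lq\lq returns $=$ on-diagonal marked intersections\rq\rq\ fails for the final return: for the path with area sequence $[2,2,1]$ there is a rectangle and the asserted return, but no green lines at all, so this is not merely a boundary nuisance but the generic situation for a rectangle at the end of the path.

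A more pointed error is your justification for excluding returns that complete a $1$-hill. You argue that such a return yields a $1$-cut at position $j$, hence by \Cref{thm:LK} a $1$-regular module $S_j$ of projective dimension $1$, \lq\lq rather than a rectangle.\rq\rq\ But the rectangle at $(i+1,j)$ encodes $\pd(S_i)=2$, not a statement about $S_j$; in fact a rectangle at $(i+1,j)$ forces the run of vertical steps of $D$ following the valley to reach $x$-coordinate at least $j+1$, so $c_j-c_{j+1}=1$ and $\pd(S_j)=1$ holds \emph{whenever} there is a rectangle at $(i+1,j)$. Thus \lq\lq projective dimension $1$ at $j$\rq\rq\ is compatible with, indeed implied by, the rectangle, and cannot be what rules out the $1$-hill. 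The correct reason, as in the paper, is a collision in the permutation matrix: the valley of $D$ at $(i+1,j)$ already places a cross in row $j$ (in column $i+1\le j$), while a $1$-hill of $\LK(D)$ ending at $j+1$ would require its peak to contribute a cross in row $j$ just above the diagonal, and a permutation matrix has only one cross per row. You would need to replace your exclusion argument by this (or its green-line analogue), and also supply the converse implication, which your proposal only gestures at.
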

\begin{proof}
  Suppose that $D$ has a valley at $(i+1, j)$, such that its next valley has $x$-coordinate strictly larger than $j+1$.
  Thus, $\BJS(D)$ has no crosses in the region below and to the right of $(j+1,j+1)$.
  Because $\BJS(D)$ is a permutation, by the pigeonhole principle, it has no crosses in the region above and to the left of $(j+1,j+1)$ either.
  Consequently, $\LK(D)$ has no peaks in this region, and therefore $\LK(D)$ must have a return with $x$-coordinate $j+1$.
  This cannot be the second step of a $1$-hill, because there is a cross in the cell with $x$-coordinate $j$, corresponding to the valley $(i+1, j)$ of $D$, and a $1$-hill would correspond to a cross in the cell with coordinates $(j,j)$.
\end{proof}

We conclude with some corollaries enumerating LNakayama algebras with certain homological restrictions.
\begin{corollary}
\label{cor:pdim1}
  The number of $(n+1)$-LNakayama algebras with exactly~$\ell$ simple modules of projective dimension~$1$ and the number of $(n+1)$-LNakayama algebras with exactly~$\ell$ simple modules of projective dimension at least~$2$ equal the Narayana numbers\footnote{\OEIS{A001263}}, counting Dyck paths of semilength~$n$ with exactly~$\ell$ peaks.
  Explicitly, this number is
  \[
    \frac{1}{n}\binom{n}{\ell-1}\binom{n}{\ell}.
  \]
\end{corollary}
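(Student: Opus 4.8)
The plan is to push the count through the bijection of \Cref{prop:LNak2Dyck}, under which an $(n+1)$-LNakayama algebra~$A$ corresponds to the Dyck path~$D$ of semilength~$n$ whose area sequence is the Kupisch series $[c_0,\dots,c_n]$, and to identify the homological invariant with a shift of the peak statistic of~$D$. The only homological input needed is \Cref{smallprojdim}\eqref{it:pd1}: a non-projective simple~$S_i$ (so $0\le i\le n-1$) has $\pd(S_i)=1$ precisely when $c_i=c_{i+1}+1$. Since the corollary's target is the Narayana enumeration of Dyck paths by peaks, it suffices to match ``projective dimension~$1$'' with a peak-type statistic.

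First I would record a counting identity on the area sequence. For $1\le j\le n$ put $\delta_j=c_{j-1}-c_j$; Condition~\eqref{eq:relative} gives $\delta_j\le 1$, so for each~$j$ exactly one of the conditions $\delta_j=1$ and $c_j\ge c_{j-1}$ holds. Writing $j=i+1$, the first alternative singles out (via \Cref{smallprojdim}\eqref{it:pd1}) exactly the non-projective simples of projective dimension~$1$, while by \Cref{def:Dyck-statistics} the second alternative $c_j\ge c_{j-1}$ is exactly the condition for a valley of~$D$ at $x$-coordinate~$j$. Two boundary checks keep this clean: $j=n$ always gives $\delta_n=1$ because $c_{n-1}=2$ and $c_n=1$ (this is $S_{n-1}$, which always has projective dimension~$1$), and valleys never occur at $j=0$. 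Partitioning $\{1,\dots,n\}$ accordingly yields
\[
  \#\{\,0\le i\le n-1 : \pd(S_i)=1\,\} + \#\{\text{valleys of }D\} = n.
\]
Since a Dyck path of semilength~$n$ has exactly one more peak than valley, this reads $\#\{\pd(S_i)=1\}=(n+1)-p$ and $\#\{\pd(S_i)\ge 2\}=p-1$, where~$p$ denotes the number of peaks of~$D$.

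To finish, I would invoke the classical fact recalled in the statement that the number of Dyck paths of semilength~$n$ with exactly~$p$ peaks is the Narayana number $N(n,p)=\frac1n\binom{n}{p-1}\binom{n}{p}$. Algebras with exactly~$\ell$ simples of projective dimension~$1$ then correspond to paths with $p=(n+1)-\ell$ peaks, of which there are $N(n,n+1-\ell)$; the symmetry $N(n,k)=N(n,n+1-k)$ of the Narayana triangle rewrites this as $N(n,\ell)=\frac1n\binom{n}{\ell-1}\binom{n}{\ell}$, the asserted formula. Algebras with exactly~$\ell$ simples of projective dimension at least~$2$ correspond to paths with $p=\ell+1$ peaks, hence are counted by $N(n,\ell+1)$; as~$\ell$ runs through its range this reproduces the same family of Narayana numbers.

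The genuinely delicate part is the middle step: matching the two cases $\delta_j=1$ versus $c_j\ge c_{j-1}$ with ``projective dimension~$1$'' versus ``valley'', and in particular handling the two ends of the area sequence so that no peak or valley is double-counted or omitted. Once this partition of $\{1,\dots,n\}$ is in place, the remainder is the standard Narayana enumeration by peaks together with the elementary peak--valley symmetry, and the apparent asymmetry between the projective-dimension-$1$ and projective-dimension-$\ge2$ statements is absorbed by the symmetry of the Narayana numbers.
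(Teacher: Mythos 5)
Your argument is correct and takes essentially the same route as the paper, whose one-line proof combines \Cref{smallprojdim}\eqref{it:pd1} with the identity that the number of peaks plus the number of double falls equals the semilength; your partition of $\{1,\dots,n\}$ into indices with $c_{j-1}-c_j=1$ (projective dimension one) versus $c_j\geq c_{j-1}$ (valleys), followed by the relation $\#\mathrm{peaks}=\#\mathrm{valleys}+1$, is the same bookkeeping in slightly different clothes. One remark: your computation correctly yields $p=\ell+1$ peaks, hence the count $\frac{1}{n}\binom{n}{\ell}\binom{n}{\ell+1}$, for algebras with exactly $\ell$ simples of projective dimension at least two, so the explicit formula $\frac{1}{n}\binom{n}{\ell-1}\binom{n}{\ell}$ in the corollary applies literally only to the projective-dimension-one count (compare $\ell=0$, where one algebra of global dimension at most one exists but the formula gives $0$) --- this is an imprecision in the statement rather than a gap in your proof, and your hedge that the second count lies in ``the same family'' is the honest reading.
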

\begin{proof}
  This is a direct consequence of \Cref{smallprojdim}\eqref{it:pd1} and the fact that the number of peaks plus the number of double falls equals the semilength of the Dyck path.
\end{proof}

The proofs of several of the further corollaries involve Lagrange inversion.
\begin{theorem}[\protect{eg.,~\cite[Theorem~5.4.2]{StanleyEC2}}]
  Let $H$ be any formal power series and let $F$ be a formal power series with compositional inverse $F^{(-1)}$.  Then
  \[
  [x^n] H(F(x)) = \frac{1}{n} [x^{n-1}] H'(x) \left(\frac{x}{F^{(-1)}(x)}\right)^n,
  \]
  where $[x^n] H(x)$ is the coefficient of $x^n$ in $H(x)$.
\end{theorem}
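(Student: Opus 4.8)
The plan is to prove this classical Lagrange inversion formula through the formal residue calculus, which reduces the identity to the elementary fact that the formal residue of a derivative vanishes. I work with formal Laurent series and write $\operatorname{Res}_x \sum_k a_k x^k \defeq a_{-1}$ for the formal residue, so that $[x^n]\Phi(x) = \operatorname{Res}_x\!\left[x^{-n-1}\Phi(x)\right]$ for every formal power series $\Phi$. Since $F$ admits a compositional inverse we have $F(0)=0$ and $F'(0)\neq 0$; I abbreviate $R \defeq F^{(-1)}$, a formal power series with $R(0)=0$, $R'(0)\neq 0$, and $F(R(u))=u$.

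The one preparatory ingredient is the change-of-variables lemma: for any formal Laurent series $\Phi$,
\[
  \operatorname{Res}_u\!\left[\Phi(R(u))\,R'(u)\right] = \operatorname{Res}_x\Phi(x).
\]
By linearity it suffices to check this for monomials $\Phi = x^k$. For $k\neq -1$ both sides vanish: the right because $x^k$ has no $x^{-1}$ term, the left because $R(u)^k R'(u) = \tfrac{1}{k+1}\tfrac{d}{du}R(u)^{k+1}$ is a derivative and the residue of a derivative of a Laurent series is always $0$. For $k=-1$ both sides equal $1$, since $R(u)=b_1u+\cdots$ with $b_1\neq 0$ gives $R'(u)/R(u) = u^{-1} + (\text{regular})$.

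With this in hand I would recast the left-hand side of the theorem as a residue and change variables via $x = R(u)$, using $F(R(u))=u$:
\[
  [x^n]H(F(x))
  = \operatorname{Res}_x\!\left[x^{-n-1}H(F(x))\right]
  = \operatorname{Res}_u\!\left[\frac{H(u)\,R'(u)}{R(u)^{n+1}}\right].
\]
The right-hand side of the theorem is likewise a residue, namely
\[
  \frac{1}{n}[x^{n-1}]H'(x)\left(\frac{x}{R(x)}\right)^{n}
  = \frac{1}{n}\operatorname{Res}_u\!\left[\frac{H'(u)}{R(u)^{n}}\right],
\]
so it remains to prove the single identity $n\operatorname{Res}_u\!\left[H(u)R'(u)R(u)^{-n-1}\right] = \operatorname{Res}_u\!\left[H'(u)R(u)^{-n}\right]$.

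This last identity is exactly the vanishing of the residue of the derivative
\[
  \frac{d}{du}\!\left[\frac{H(u)}{R(u)^{n}}\right]
  = \frac{H'(u)}{R(u)^{n}} - n\,\frac{H(u)R'(u)}{R(u)^{n+1}},
\]
which is legitimate because $R(u)=b_1u+\cdots$ with $b_1\neq 0$ makes $H(u)R(u)^{-n}$ a genuine formal Laurent series. The main obstacle, such as it is, lies purely in setting up the formal-residue machinery cleanly---the change-of-variables lemma and the vanishing of the residue of a derivative---rather than in any computation specific to the formula; once those two elementary facts are in place the theorem follows in three lines. As the result is entirely classical (e.g.\ \cite[Theorem~5.4.2]{StanleyEC2}) one may of course simply cite it, the sketch above recording the self-contained route.
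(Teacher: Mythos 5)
Your proof is correct. Note, however, that the paper itself offers no proof of this statement at all: it is quoted verbatim as a classical result with a pointer to Stanley's \emph{Enumerative Combinatorics}, Vol.~2, Theorem~5.4.2, and is used only as a black box in the subsequent enumerative corollaries. So there is nothing in the paper to compare your argument against; what you have supplied is a self-contained justification of the cited tool. Your route --- recasting both sides as formal residues, the change-of-variables lemma $\operatorname{Res}_u[\Phi(R(u))R'(u)]=\operatorname{Res}_x\Phi(x)$ verified on monomials, and the observation that the residue of the derivative of $H(u)R(u)^{-n}$ vanishes --- is the standard residue-calculus proof of Lagrange inversion, essentially the one found in the cited reference. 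The details check out: the linearity step in the lemma is harmless because only the finitely many monomials $x^k$ with $k\le -1$ of a Laurent series $\Phi$ can contribute to the residue of $\Phi(R(u))R'(u)$, and the hypothesis that $F$ is invertible guarantees $R(u)=b_1u+\cdots$ with $b_1\neq 0$, so that $H(u)R(u)^{-n}$ is a genuine Laurent series and the final differentiation is legitimate. (Implicitly you assume $n\ge 1$, as does the statement itself, since it divides by $n$.)
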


\begin{corollary}
  \label{cor:pdim2}
  The number of $(n+1)$-LNakayama algebras with exactly~$\ell$ simple modules of projective dimension~$2$ is the number of Dyck paths of semilength~$n$ with exactly~$\ell$ returns which are not $1$-hills.  Explicitly, this number is\footnote{\OEIS{A097877}}
  \[
    \sum_{k=0}^{n-2\ell}\frac{\ell}{k+\ell}\binom{2(k+\ell)}{k}\binom{n-k-\ell}{\ell}.
  \]
\end{corollary}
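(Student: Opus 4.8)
The plan is to combine the structural results already established with a standard generating-function computation. First I would prove the combinatorial reformulation. By \Cref{smallprojdim}\eqref{it:pd2} together with the area-sequence description of a rectangle in \Cref{def:Dyck-statistics}, a non-projective simple module $S_i$ has $\pd(S_i)=2$ precisely when $D$ has a rectangle at $(i+1,i+c_i-1)$; hence the number of simple modules of projective dimension $2$ equals the number of rectangles of $D$. \Cref{thm:rectangle-return} turns each rectangle of $D$ into a return of $\LK(D)$ that is not the final step of a $1$-hill, and conversely. Since $\LK$ is an involution, hence a bijection on $\Dyck_n$, and since $A\mapsto D$ is a bijection by \Cref{prop:LNak2Dyck}, the number of $(n+1)$-LNakayama algebras with exactly $\ell$ simple modules of projective dimension $2$ equals the number of Dyck paths of semilength $n$ with exactly $\ell$ non-hill returns. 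This establishes the first assertion and reduces the enumeration to a purely combinatorial count.

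Next I would set up the generating function. Write $\mathcal{C}(x)$ for the generating function of the Catalan numbers, so that $\mathcal{C}=1+x\mathcal{C}^2$. Every nonempty Dyck path factors uniquely as a sequence of prime paths, each of the form $U P D$ with $P\in\Dyck$; its terminal down-step is a return, and that return is the final step of a $1$-hill exactly when the prime path is the single peak $UD$. Thus a non-hill return is the same as a prime factor different from $UD$, and marking such factors by $t$ gives the single-prime weight $x\bigl(1+t(\mathcal{C}(x)-1)\bigr)$ and the bivariate generating function
\[
  F(x,t)=\frac{1}{1-x\bigl(1+t(\mathcal{C}(x)-1)\bigr)}=\sum_{m\ge0}x^m\bigl(1+t(\mathcal{C}(x)-1)\bigr)^m,
\]
whose coefficient $[x^n t^\ell]F$ counts Dyck paths of semilength $n$ with exactly $\ell$ non-hill returns.

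Finally I would extract the coefficient. Taking $[t^\ell]$ and summing the resulting series gives $[t^\ell]F=(\mathcal{C}-1)^\ell\,x^\ell/(1-x)^{\ell+1}$, and substituting $\mathcal{C}-1=x\mathcal{C}^2$ yields $[t^\ell]F=x^{2\ell}\mathcal{C}^{2\ell}/(1-x)^{\ell+1}$. It remains to compute $[x^{n-2\ell}]\bigl(\mathcal{C}^{2\ell}/(1-x)^{\ell+1}\bigr)$ as a Cauchy product. For the first factor I would apply the Lagrange inversion theorem above to the equation $\mathcal{C}=1+x\mathcal{C}^2$ (equivalently, invert $u\mapsto u/(1+u)^2$) to obtain the standard identity $[x^k]\mathcal{C}(x)^{2\ell}=\frac{\ell}{k+\ell}\binom{2(k+\ell)}{k}$; the second factor contributes $[x^{j}](1-x)^{-(\ell+1)}=\binom{j+\ell}{\ell}$. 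Convolving these and simplifying $\binom{(n-2\ell-k)+\ell}{\ell}=\binom{n-k-\ell}{\ell}$ produces the claimed sum over $0\le k\le n-2\ell$.

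I expect the main obstacle to be the bookkeeping in the generating-function step rather than any deep difficulty: one must treat the $1$-hill prime factors separately from all other primes and verify that the marking variable $t$ records exactly the non-hill returns, including the degenerate boundary behaviour (the unique all-$1$-hills path accounts for the case $\ell=0$, where the closed form is read as a limit). The analytic identities—summation of the series, the Catalan-power coefficients via Lagrange inversion, and the binomial convolution—are then routine.
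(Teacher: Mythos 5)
Your proposal is correct and follows essentially the same route as the paper: the first assertion via \Cref{smallprojdim}\eqref{it:pd2} and \Cref{thm:rectangle-return}, then the generating function $x^\ell\big(D(x)-1\big)^\ell/(1-x)^{\ell+1}$ (your bivariate $F(x,t)$ with $[t^\ell]$ extracted is the same series), and finally Lagrange inversion for the Catalan powers convolved with the binomial series. The only cosmetic difference is that you phrase the Lagrange step as $[x^k]\mathcal{C}^{2\ell}$ after substituting $\mathcal{C}-1=x\mathcal{C}^2$, whereas the paper computes $[x^k]\big(D(x)-1\big)^\ell$ directly; these are identical computations.
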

\begin{proof}
  The claim in the first sentence follows from \Cref{smallprojdim}\eqref{it:pd2} and \Cref{thm:rectangle-return}.  To enumerate these, let $D(x) = 1 + x D(x)^2$ be the generating function for all Dyck paths.  Then, $x\big(D(x)-1\big)$ is the generating function for Dyck paths without $1$-hills.  Since $1/(1-x)$ is the generating function for (possibly empty) paths consisting only of $1$-hills,
  \[
    \frac{x^\ell \big(D(x)-1\big)^\ell}{(1-x)^{\ell+1}}
  \]
  is the generating function for all Dyck paths with exactly~$\ell$
  returns which are not $1$-hills.

  Using Lagrange inversion we find that the coefficient of $x^k$ in $\big(D(x)-1\big)^\ell$  equals
  $
  \frac{\ell}{k}\binom{2k}{k-\ell},
  $
  and using the binomial theorem we find that the coefficient of $x^{n-\ell-k}$ in $(1-x)^{-\ell-1}$ equals
  $
  \binom{n-k}{n-k-\ell}.
  $
\end{proof}

\begin{corollary}
  \label{cor:pdim1pdim2GF}
  Let $a_{n,k,\ell}$ be the number of~$(n+1)$-LNakayama algebras with $k$
  simple modules of projective dimension~$1$ and $\ell$~simple
  modules of projective dimension~$2$ and let
  \begin{align*}
    N(x,q,t) &= \sum_{n,k,\ell} a_{n,k,\ell} x^n q^k t^\ell\\%
    &= 1 + q x + (q^2+qt)x^2 + (q^3+3q^2t+qt)x^3 + \cdots
  \end{align*}
  Then
  \begin{multline*}
  \Big(x^2(q-t)q(t-1) - x(2qt - 2q + t) + t - 1\Big) N(x,q,t)^2\\%
  +\Big((qt - 2q + t)x - t + 2\Big) N(x,q,t)%
  - 1 = 0.
  \end{multline*}
\end{corollary}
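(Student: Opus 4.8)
The plan is to transport both homological statistics through the Lalanne--Kreweras involution of \Cref{dfn:LK}, where they turn into the number of peaks and the number of nontrivial returns of a Dyck path, and then to evaluate the resulting bivariate generating function by a first-return decomposition.

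First I would record the two combinatorial translations. By \Cref{prop:LNak2Dyck} an $(n+1)$-LNakayama algebra~$A$ corresponds to a Dyck path~$D$ of semilength~$n$, and since $\LK$ is an involution on~$\Dyck_n$ I may sum over $E\defeq\LK(D)$ in place of summing over~$A$. By \Cref{smallprojdim}\eqref{it:pd1} the simple~$S_i$ has projective dimension one exactly when $c_i=c_{i+1}+1$; for $i<n-1$ these occurrences are precisely the double falls of~$D$, whereas $i=n-1$ always qualifies because $c_{n-1}=2$ and $c_n=1$. Hence for $n\ge 1$ the number of simples of projective dimension one equals $(\#\text{double falls of }D)+1$. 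As~$D$ has exactly as many double falls as double rises, the construction of~$\LK$ produces that many valleys, and therefore one more peak, so this number equals the number of peaks of~$E$ (the empty path gives the constant term). Meanwhile \Cref{smallprojdim}\eqref{it:pd2} and \Cref{thm:rectangle-return} identify the simples of projective dimension two with the rectangles of~$D$, and thus with the returns of~$E$ that are not $1$-hills, exactly as in the proof of \Cref{cor:pdim2}. Writing $p(E)$ for the number of peaks and $r(E)$ for the number of non-$1$-hill returns, this gives
\[
  N(x,q,t)=\sum_{n\ge0}\ \sum_{E} x^{n}\,q^{p(E)}\,t^{r(E)},
\]
the inner sum ranging over Dyck paths~$E$ of semilength~$n$.

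Next I would evaluate this series by decomposing each Dyck path into its prime (return) components. Let $C=C(x,q)$ be the generating function for Dyck paths by semilength and peaks; the first-return decomposition gives $xC^2-(1+x-xq)C+1=0$. A prime component of~$E$ is either a $1$-hill, contributing $xq$ and carrying no factor of~$t$, or an elevation of a nonempty Dyck path~$F$, which leaves the number of peaks unchanged and contributes $tx(C-1)$. Reading~$E$ as a sequence of primes then yields
\[
  N=\frac{1}{1-xq-tx(C-1)}.
\]

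Finally I would eliminate the auxiliary series~$C$: solving the last display for $C-1$, substituting into $xC^2-(1+x-xq)C+1=0$, and clearing denominators by $t^2xN^2$ produces, after collecting powers of~$N$, a quadratic in~$N$ whose coefficients agree, after an overall change of sign, with those in the statement. I expect the only genuine difficulty to lie in the first step, namely the boundary bookkeeping that correctly accounts for the forced projective-dimension-one simple~$S_{n-1}$ and for the degenerate $1$-hill primes; once the generating function is pinned down, the concluding elimination is a routine if slightly lengthy computation.
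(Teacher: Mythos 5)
Your proposal is correct and follows essentially the same route as the paper: translate the two statistics via the Lalanne--Kreweras involution and \Cref{thm:rectangle-return} into peaks and non-$1$-hill returns, apply the first-return (prime-component) decomposition to obtain $N=1/\bigl(1-xq-xt(C-1)\bigr)$ with $C=N(x,q,1)$ the Narayana generating function, and eliminate $C$ using its quadratic equation. Your treatment of the boundary bookkeeping (the forced projective-dimension-one simple $S_{n-1}$ matching the extra peak of $\LK(D)$) is in fact slightly more explicit than the paper's.
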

\begin{proof}
  According to \Cref{smallprojdim} we want to count Dyck paths with $k$ double falls and $\ell$ rectangles.  Using the definition of the Lalanne-Kreweras involution and \Cref{thm:rectangle-return}, we can equivalently count Dyck paths with $k$ peaks and $\ell$ returns which are not $1$-hills.

  Let $d_{n,k,\ell}$ be the number of Dyck paths of semilength~$n$ with $k$ peaks and $\ell$ returns which are not $1$-hills, and let $D(x, q, t)=\sum_{n\geq 0} d_{n,k,\ell} x^n q^k t^\ell$ be the corresponding generating function.  By the foregoing, $D(x, q, t) = N(x, q, t)$.

  In the following we will frequently use the so called \lq first passage decomposition\rq\ of Dyck paths: we decompose a non-empty Dyck path into an initial Dyck path, which has a single return (which is its final step), and a remaining Dyck path.

  Since a Dyck path is either empty, or begins with a $1$-hill (which is a peak), or begins with a horizontal step, followed by a non-empty Dyck path, followed by a vertical step (which is a return, and not a $1$-hill), $D(x, q, t)$ satisfies the equation
  \[
  D(x, q, t) = 1 + xq D(x, q, t) + xt\big(D(x, q, 1) - 1\big) D(x, q, t).
  \]
  Substituting $t=1$ we obtain a quadratic equation for $D(x, q, 1)$, with a unique solution which is a formal power series (and not a Laurent series).  It is then straightforward to check that $D(x, q, t) = 1/\big(1-xq-xt(D(x, q, 1) - 1)\big)$ satisfies the claimed equation.
\end{proof}

\begin{corollary}
\label{cor:no1reg}
  The number of $(n+1)$-LNakayama algebras without $1$-regular simple modules equals the Riordan number\footnote{\OEIS{A005043}}, counting Dyck paths of semilength~$n$ without $1$-rises.
  Explicitly, this number is
  \[
    \frac{1}{n+1}\sum_{k=0}^n \binom{n+1}{k}\binom{n-k-1}{k-1}.
  \]
\end{corollary}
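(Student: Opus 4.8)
The plan is to prove the combinatorial reformulation first and then enumerate. For the reformulation I would invoke \Cref{thm:1-rises-2-hills}\eqref{it:1reg1rise}: via the bijection $\phi$ there, an $(n+1)$-LNakayama algebra $A$ (equivalently, by \Cref{prop:LNak2Dyck}, a Dyck path $D$ of semilength $n$) has a $1$-regular simple module $S_i$ if and only if $\phi(D)$ has a $1$-rise with $x$-coordinate $i$. Since $\phi$ is a bijection on $\Dyck_n$, the algebras $A$ without $1$-regular simple modules correspond exactly to the Dyck paths of semilength $n$ having no $1$-rise at all. This settles the first sentence of the corollary; what remains is to count Dyck paths of semilength $n$ without $1$-rises and to identify the count with \OEIS{A005043}.

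By \Cref{def:Dyck-statistics} a $1$-rise is an isolated horizontal step, so a Dyck path has no $1$-rise precisely when each of its maximal horizontal runs has length at least two. Let $F=F(x)$ be the generating function, by semilength, for such Dyck paths, and let $F_1=F_1(x)$ be the analogous generating function for Dyck paths in which every maximal horizontal run \emph{except possibly the first} has length at least two. Decomposing a nonempty Dyck path into its arches $h\,B\,v$ (its prime factors), the leading horizontal step of an arch prolongs the first horizontal run of the elevated subpath $B$ while leaving the remaining runs unchanged; hence an arch has all horizontal runs of length $\ge 2$ exactly when $B$ is a nonempty path of the $F_1$-type. This yields the system $F = 1 + x\,(F_1-1)\,F$ and $F_1 = 1 + x\,F_1\,F$, where in the second equation the first arch $h\,B\,v$ is allowed to reduce to $hv$. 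Eliminating $F_1 = 1/(1-xF)$ gives the quadratic
\[
  x(1+x)\,F^2 - (1+x)\,F + 1 = 0.
\]

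To extract coefficients I would set $W=xF$, which turns the quadratic into $x = (1+x)\,W(1-W)$ and hence $W = x\,\phi(W)$ with $\phi(W)=\dfrac{1-W+W^2}{1-W}=1+\dfrac{W^2}{1-W}$ and $W(0)=0$. Lagrange inversion (\cite[Theorem~5.4.2]{StanleyEC2}) then gives
\[
  [x^n]F = [x^{n+1}]W = \frac{1}{n+1}\,[W^n]\,\phi(W)^{n+1}.
\]
Expanding $\phi(W)^{n+1}=\sum_{k}\binom{n+1}{k}\frac{W^{2k}}{(1-W)^k}$ and using $[W^{n-2k}](1-W)^{-k}=\binom{n-k-1}{k-1}$ reproduces exactly the claimed formula $\frac{1}{n+1}\sum_{k}\binom{n+1}{k}\binom{n-k-1}{k-1}$; comparing initial terms then identifies the sequence as the Riordan numbers \OEIS{A005043}.

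The routine part is the Lagrange inversion, which is entirely analogous to the computation in \Cref{cor:pdim2}. The one genuinely delicate point is the derivation of the functional equation: because the leading horizontal step of an arch merges with the first run of the elevated subpath, the condition ``all horizontal runs have length $\ge 2$'' does not localize to the individual arches, so a naive first-return decomposition fails. Introducing the auxiliary series $F_1$, which relaxes the constraint on precisely the first run, is what makes the decomposition close up, and verifying that $F_1$ correctly captures the arch interiors together with the degenerate arch $hv$ is the step I would check most carefully.
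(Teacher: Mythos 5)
Your proposal is correct, and its first step is exactly the paper's (implicit) argument: \Cref{thm:1-rises-2-hills}\eqref{it:1reg1rise} together with the bijectivity of $\phi$ reduces the statement to counting Dyck paths of semilength $n$ without $1$-rises, which the paper then simply identifies with \OEIS{A005043} without further proof. Your generating-function derivation of the explicit formula is a genuine addition: the system $F = 1 + x(F_1-1)F$, $F_1 = 1 + xF_1F$ correctly handles the merging of an arch's leading horizontal step with the first run of its interior, the resulting quadratic $x(1+x)F^2-(1+x)F+1=0$ is indeed the Riordan equation, and the Lagrange inversion with $\phi(W)=1+W^2/(1-W)$ reproduces the stated sum (with the usual convention that terms with $2k>n$ vanish, an ambiguity already present in the paper's formula). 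So you have supplied a self-contained proof of the enumerative claim that the paper outsources to a citation.
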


\begin{corollary}
\label{cor:no2reg}
  For $n \geq 1$, the number of $(n+1)$-LNakayama algebras~$A$ without $2$-regular simple modules (that is, such that the category of finitely generated projective modules has a unique exact structure) equals the number of Dyck paths without $2$-hills\footnote{\OEIS{A114487}}.
  Explicitly, this number is
  \[
    \sum_{k=0}^{\lfloor\frac{n}{2}\rfloor} %
    (-1)^k\frac{k+1}{n-k+1}\binom{2n-3k}{n-k}.
  \]
\end{corollary}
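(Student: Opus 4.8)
The first claim, that these algebras are counted by Dyck paths of semilength~$n$ without $2$-hills, is immediate from the results already established. By \Cref{thm:LK}(2), writing~$D$ for the Dyck path associated with~$A$ via \Cref{prop:LNak2Dyck}, the module~$S_i$ is $2$-regular exactly when $\LK(D)$ has a $2$-hill at position~$i$. Hence~$A$ has no $2$-regular simple module if and only if $\LK(D)$ has no $2$-hill. Since $\LK$ is an involution, and in particular a bijection, on Dyck paths of semilength~$n$, composing it with the bijection of \Cref{prop:LNak2Dyck} shows that the $(n+1)$-LNakayama algebras without $2$-regular simple modules are in bijection with the Dyck paths of semilength~$n$ having no $2$-hill. (One could equally invoke \Cref{thm:1-rises-2-hills}.) So it only remains to enumerate the latter.

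To get the explicit formula I would use the first-passage (arch) decomposition together with inclusion--exclusion over the set of $2$-hills. A $2$-hill is precisely a prime component consisting of two horizontal steps followed by two vertical steps, contributing~$x^2$ to the generating function $D(x) = 1 + xD(x)^2$ for all Dyck paths. Marking each $2$-hill by a variable~$y$ and decomposing a Dyck path into its sequence of prime components gives
\[
  \sum_{n,k} M_{n,k}\, x^n y^k = \frac{1}{1 - xD(x) - y x^2},
\]
where $M_{n,k}$ is the number of Dyck paths of semilength~$n$ together with a chosen set of~$k$ of its $2$-hills. By inclusion--exclusion, the number of Dyck paths of semilength~$n$ with no $2$-hill equals $\sum_{k\ge 0}(-1)^k M_{n,k}$.

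The key simplification is that $D(x) = 1 + xD(x)^2$ is equivalent to $1/(1 - xD(x)) = D(x)$. Expanding the geometric series in~$y$ therefore yields
\[
  \frac{1}{1 - xD(x) - yx^2} = \sum_{k\ge 0} \frac{y^k x^{2k}}{\bigl(1 - xD(x)\bigr)^{k+1}} = \sum_{k\ge 0} y^k x^{2k} D(x)^{k+1},
\]
so that $M_{n,k} = [x^{n-2k}] D(x)^{k+1}$. Applying the standard Lagrange-inversion evaluation $[x^m]D(x)^r = \tfrac{r}{2m+r}\binom{2m+r}{m}$, used exactly as in the proof of \Cref{cor:pdim2}, with $r = k+1$ and $m = n-2k$, gives $M_{n,k} = \tfrac{k+1}{2n-3k+1}\binom{2n-3k+1}{n-2k}$, which simplifies (by pulling a factor into the factorials) to $\tfrac{k+1}{n-k+1}\binom{2n-3k}{n-k}$; the range $0 \le k \le \lfloor n/2\rfloor$ is forced by $n-2k \ge 0$. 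Substituting into $\sum_k(-1)^k M_{n,k}$ then produces the stated formula.

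The routine-but-careful part will be the final binomial simplification of~$M_{n,k}$, and the one genuine subtlety is setting up the inclusion--exclusion over the correct objects, namely a distinguished \emph{subset} of the $2$-hills of a fixed path. The degenerate cases of $2$-hills at the very beginning or end of $\LK(D)$, flagged inside the proof of \Cref{thm:LK}, cause no trouble here, since I only use \Cref{thm:LK} as a black box together with the bijectivity of~$\LK$. As a safeguard against sign or index slips, I would check the formula against small cases: for $n=2$ and $n=3$ it evaluates to~$1$ and~$3$, which match a direct count of the Dyck paths of semilength~$2$ and~$3$ avoiding the prime component with two horizontal steps followed by two vertical steps.
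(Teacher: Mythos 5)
Your argument is correct, and for the enumerative half it is genuinely more self-contained than the paper's. The first part of your proof (passing from $2$-regular simple modules to $2$-hills of $\LK(D)$ via \Cref{thm:LK}, then using that $\LK$ is an involution, hence a bijection, on Dyck paths of semilength~$n$) is exactly the reasoning the paper leaves implicit. For the explicit formula, however, the paper simply cites a MathOverflow answer of Ira Gessel and proves nothing, whereas you derive it: the first-passage decomposition with a distinguished subset of $2$-hills gives $\sum_{n,k}M_{n,k}x^ny^k = \bigl(1-xD(x)-yx^2\bigr)^{-1}$, the identity $1/(1-xD(x))=D(x)$ collapses this to $\sum_k y^kx^{2k}D(x)^{k+1}$, and the evaluation $[x^m]D(x)^r=\tfrac{r}{2m+r}\binom{2m+r}{m}$ together with inclusion--exclusion at $y=-1$ yields the stated sum; I checked that $\tfrac{k+1}{2n-3k+1}\binom{2n-3k+1}{n-2k}=\tfrac{k+1}{n-k+1}\binom{2n-3k}{n-k}$ and that the values for $n=2,3,4$ (namely $1$, $3$, $10$) agree with direct counts. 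What your approach buys is a proof that could replace the external citation; what the paper's citation buys is brevity. The only stylistic quibble is that the paper's stated proof route for the combinatorial identification goes through \Cref{thm:1-rises-2-hills} (as recorded in its summary table), but as you note the two are interchangeable here since both $\LK$ and $\phi$ are bijections preserving the relevant count.
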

\begin{proof}
  The formula for the number of such Dyck paths was provided by Ira Gessel~\cite{MO}.
\end{proof}

\begin{corollary}
  \label{cor:k1regl2reg}
  Let $a_{n,k,\ell}$ be the number of~$(n+1)$-LNakayama algebras with $k$
  simple $1$-regular and $\ell$ simple $2$-regular modules and let
  \begin{align*}
    N(x,q,t) &= \sum_{n,k,\ell} a_{n,k,\ell} x^n q^k t^\ell\\%
    &= 1 + q x + (q^2+t)x^2 + (q^3+2qt+q+1)x^3 + \cdots
  \end{align*}
  Then
  \begin{multline*}
  \Big(x^3 (t-1)^2 + x^2 (t-1)(q-1) - x (t-1 + q-1) +1\Big) N(x,q,t)^2\\%
  +\Big(2x^2(t-1) + x(q-1) -1\Big) N(x,q,t)%
  + x = 0.
  \end{multline*}
\end{corollary}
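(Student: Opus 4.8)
The plan is to follow the template of \Cref{cor:pdim1pdim2GF}: first translate the algebraic count into a pair of Dyck path statistics, then set up a functional equation via the first passage decomposition, and finally eliminate an auxiliary specialization to reach a single quadratic equation. By \Cref{thm:1-rises-2-hills}, an $(n+1)$-LNakayama algebra with $k$ simple $1$-regular and $\ell$ simple $2$-regular modules corresponds to a Dyck path of semilength $n$ with $k$ $1$-rises and $\ell$ $2$-hills. Hence $N(x,q,t)$ equals the generating function $D(x,q,t)$ counting Dyck paths by semilength (marked by $x$), number of $1$-rises (marked by $q$), and number of $2$-hills (marked by $t$), and it suffices to determine the algebraic equation satisfied by $D$.

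First I would use the first passage decomposition, writing a non-empty Dyck path as a prime Dyck path $B$ followed by a remaining Dyck path, so that $D = 1 + F\cdot D$ with $F$ the generating function of prime Dyck paths. Two observations make both statistics additive over this decomposition. Since consecutive blocks meet at a valley on the main diagonal, that is, a $v$ immediately followed by an $h$, no horizontal run crosses a block boundary, so $1$-rises are counted block by block. Moreover a $2$-hill is exactly a block equal to $hhvv$: it is itself a prime path touching the diagonal only at its endpoints, whereas any copy of $hhvv$ lying inside a block is off the main diagonal and hence not a $2$-hill. Consequently the interior of a block must be enumerated while ignoring $2$-hills, that is, by the specialization $G(x,q):=D(x,q,1)$.

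Next I would compute $F$ by writing a prime block as $B = h\cdot P\cdot v$ with $P$ a (possibly empty) Dyck path. The delicate point, and the step I expect to be the main obstacle, is that the leading horizontal step of $B$ abuts the first horizontal run of $P$: if that run has length one (equivalently $P$ begins with a $1$-hill, $P = hv\cdot P'$) the two merge into a double rise and one $1$-rise is destroyed, whereas if the run has length at least two nothing is lost. Splitting $P$ into the empty path, those beginning with a $1$-hill, and those beginning with a double rise, and isolating the unique block $hhvv$ that carries a $2$-hill, I expect to obtain
\[
  F = xq + x\bigl((1+x-xq)G - 1\bigr) + x^2(t-1).
\]
Together with $D = 1/(1-F)$ this yields the functional equation
\[
  D = 1 + \bigl(x(q-1)+x^2(t-1)\bigr)D + x(1+x-xq)\,D(x,q,1)\,D,
\]
which is the exact analogue of the equation used in \Cref{cor:pdim1pdim2GF}, now with the extra $1$-rise merging correction encoded in the factor $1+x-xq$.

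Finally I would eliminate the catalytic series $G=D(x,q,1)$. Setting $t=1$ and using $1-x(q-1)=1+x-xq$ gives the quadratic $x(1+x-xq)G^2-(1+x-xq)G+1=0$, which has a unique power-series solution. The same functional equation expresses $G$ as an explicit rational function of $D$, from
\[
  x(1+x-xq)\,G\,D = \bigl(1 - x(q-1) - x^2(t-1)\bigr)D - 1 .
\]
Substituting this rational expression into the quadratic for $G$ and clearing denominators eliminates $G$ and leaves a single quadratic equation in $N=D$, which is the asserted relation; the computation is routine and can be checked against $1 + qx + (q^2+t)x^2 + (q^3+2qt+q+1)x^3 + \cdots$. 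The genuine difficulties are the $1$-rise merging correction in the prime block analysis and the clean elimination of the auxiliary series $D(x,q,1)$.
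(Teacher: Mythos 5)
Your reduction to Dyck paths via \Cref{thm:1-rises-2-hills} and your overall strategy (a functional equation involving a catalytic specialization, followed by elimination) is exactly the paper's; the difference lies in the decomposition. The paper splits a path at its first $2$-hill, so its auxiliary series is $D(x,q,0)$ (paths without $2$-hills), which it then decomposes separately; you use the first-passage decomposition throughout, with auxiliary series $G=D(x,q,1)$. Your analysis of a prime block $h\cdot P\cdot v$ --- splitting on whether $P$ is empty, begins with a $1$-hill (which destroys one $1$-rise), or begins with a double rise, plus the correction $x^2(t-1)$ for the unique block $hhvv$ --- is correct, and the resulting functional equation
\[
D = 1 + \bigl(x(q-1)+x^2(t-1)\bigr)D + x(1+x-xq)\,D(x,q,1)\,D
\]
checks out against the actual counts through semilength $4$. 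This is a legitimate, and arguably tidier, alternative to the paper's two-stage decomposition.

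The one step that fails is the final claim that the elimination ``is the asserted relation.'' Carrying out your elimination yields
\[
x\Bigl(x^3(t-1)^2+x^2(t-1)(q-1)-x(t-1+q-1)+1\Bigr)N^2 + \Bigl(2x^2(t-1)+x(q-1)-1\Bigr)N + 1 = 0,
\]
that is, the factor $x$ multiplies the coefficient of $N^2$ rather than appearing as the constant term. The relation as printed in the corollary already fails at order $x^1$: specializing $q=t=1$ it reads $N^2-N+x=0$, which is satisfied by $xC(x)$ but not by the Catalan series $C(x)=N(x,1,1)$, which instead satisfies $xC^2-C+1=0$. So the series check you propose at the end is not a formality --- it would reveal that the printed identity holds for $xN$ rather than for $N$. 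Your derivation is sound; the discrepancy is a typo in the stated identity, and you should say so rather than assert agreement.
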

\begin{proof}
  Following to \Cref{thm:1-rises-2-hills} it suffices to determine the number $d_{n,k,\ell}$ of Dyck paths of semilength~$n$ with $k$ $1$-rises and $\ell$ $2$-hills.  Let $D(x, q, t)=\sum_{n\geq 0} d_{n,k,\ell} x^n q^k t^\ell$ be the corresponding generating function.  By the foregoing, $D(x, q, t) = N(x, q, t)$.

  $D(x, q, 0)$ is the generating function for Dyck paths without $2$-hills.  Since a Dyck path either contains no $2$-hills, or begins with a Dyck path without $2$-hills, followed by a $2$-hill, we have the equation
  \[
  D(x, q, t) = D(x, q, 0) + D(x, q, 0) x^2t D(x, q, t).
  \]
  To obtain an equation for $D(x, q, 0)$, we observe that a Dyck path without $2$-hills is either empty, begins with a $1$-hill, or begins with a double rise.  The generating function for prime Dyck paths beginning with a double rise, equals
  \[
  D(x, q) = x^2 \big(D(x, q, 1)-1\big) + x \big(D(x, q, 0) (1-xq) - 1\big),
  \]
  where we distinguish whether there is a peak immediately after the double rise or not.
  Thus,
  \[
  D(x, q, 0) = 1 + xq D(x, q, 0) + DD(x, q) D(x, q, 0).
  \]
  From these equations we can compute $D(x, q, t)$, and check that it satisfies the claimed equation.
\end{proof}

Using \Cref{thm:enomoto}, \Cref{thm:1-rises-2-hills} also gives a sharp upper bound for the number of exact structures on the category of finitely generated projective modules for $n$-LNakayama algebras.

\begin{corollary}
\label{bound2reglnak}
  An $n$-LNakayama algebra has at most $\lfloor\frac{n-1}{2}\rfloor$ $2$-regular simple modules and thus at most $2^{\lfloor\frac{n-1}{2}\rfloor}$ exact structures on the category of finitely generated projective modules.
  This bound is sharp.
\end{corollary}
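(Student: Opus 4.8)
The plan is to combine the combinatorial description of $2$-regular simple modules from \Cref{thm:1-rises-2-hills} with an elementary counting argument on Dyck paths, and then to invoke Enomoto's \Cref{thm:enomoto} for the statement about exact structures. Recall from \Cref{prop:LNak2Dyck} that an $n$-LNakayama algebra~$A$ corresponds to a Dyck path~$D$ of semilength $n-1$, and that \Cref{thm:1-rises-2-hills} provides a bijection~$\phi$ under which the $2$-regular simple modules of~$A$ are in bijection with the $2$-hills of the Dyck path $\phi(D)$, again of semilength $n-1$.

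First I would bound the number of $2$-hills of $\phi(D)$. By \Cref{def:Dyck-statistics}, a $2$-hill at position~$i$ consists of two consecutive horizontal steps followed by two consecutive vertical steps, starting at $(i,i)$; in particular it begins and ends on the main diagonal. Hence distinct $2$-hills are pairwise disjoint, and each one consumes exactly two of the $n-1$ horizontal steps of a Dyck path of semilength $n-1$. It follows that $\phi(D)$ has at most $\lfloor \frac{n-1}{2}\rfloor$ $2$-hills, which via \Cref{thm:1-rises-2-hills} gives the asserted bound on the number of $2$-regular simple modules of~$A$. Combining this with \Cref{thm:enomoto}, which shows that an algebra with exactly~$m$ $2$-regular simple modules has exactly~$2^m$ exact structures on the category of finitely generated projective modules, yields the upper bound $2^{\lfloor (n-1)/2\rfloor}$.

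For sharpness I would exploit that $\phi$ is a bijection on Dyck paths of semilength $n-1$, so it suffices to exhibit a single such path attaining the bound and then take its preimage. The concatenation of $\lfloor \frac{n-1}{2}\rfloor$ copies of the $2$-hill $hhvv$, followed by a single $1$-hill $hv$ when $n-1$ is odd, is a Dyck path of semilength $n-1$ consisting of $\lfloor \frac{n-1}{2}\rfloor$ pairwise disjoint $2$-hills, hence realizes the maximum. The $n$-LNakayama algebra~$A$ for which $\phi(D)$ equals this path then has exactly $\lfloor \frac{n-1}{2}\rfloor$ $2$-regular simple modules, so both the combinatorial bound and, via \Cref{thm:enomoto}, the bound $2^{\lfloor (n-1)/2\rfloor}$ on exact structures are sharp.

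The argument is almost entirely bookkeeping, and the only point I would take care to check is the off-by-one between the number~$n$ of simple modules and the semilength $n-1$ of the associated Dyck path, together with the observation that the two degenerate cases in \Cref{thm:1-rises-2-hills} (where the path begins or ends with a $1$-hill) do not affect the disjointness of $2$-hills; the counting itself is otherwise immediate.
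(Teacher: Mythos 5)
Your proposal is correct and follows essentially the same route as the paper: both reduce the claim to the observation that a Dyck path of semilength $n-1$ has at most $\lfloor\frac{n-1}{2}\rfloor$ pairwise disjoint $2$-hills and then invoke \Cref{thm:enomoto}. The only cosmetic difference is that the paper appeals to \Cref{thm:LK} and writes down explicit extremal Kupisch series (e.g.\ $[2,3,\dots,2,3,2,2,1]$), whereas you use \Cref{thm:1-rises-2-hills} and the bijectivity of $\phi$ to pull back the all-$2$-hills path; both are valid.
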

\begin{proof}
  A Dyck path of semilength $n-1$ has at most $\lfloor\frac{n-1}{2}\rfloor$ $2$-hills.  Following \Cref{thm:LK}, the bound is thus obtained for the $n$-LNakayama algebras with Kupisch series $[2,3,\dots,2,3,2,2,1]$ if $n$ is odd, and, for example, $[2,3,\dots,2,3,2,1]$ if $n$ is even.
\end{proof}

\subsubsection{Describing 1-regular simple modules using the zeta map}
\label{sec:zetamap}

We finish this section with an alternative approach to \Cref{thm:1-rises-2-hills}\eqref{it:1reg1rise} using the \emph{zeta map}.
We refer to~\cite[page~50]{Hag2008} for the history of this map and its original context.
Let~$D$ be a Dyck path of semilength~$n$ with coarea sequence $(d_0,\ldots,d_{n})$.  We obtain a Dyck path~$\zeta(D)$ as follows:
\begin{itemize}
  \item First, let~$a_k$ be the number of indices~$i$ with $d_i = k$ and build an intermediate Dyck path (the \Dfn{bounce path}) consisting of~$a_2$ horizontal steps, followed by~$a_2$ vertical steps, followed by~$a_3$ horizontal and vertical steps, and so on.

  \item Then, we fill the rectangular regions between two consecutive peaks of the bounce path.
  Observe that the rectangle between the $k$-th and the $(k+1)$-st peak must be filled by $a_{k+1}$ vertical steps and $a_{k+2}$ horizontal steps.
  We do so by scanning the coarea sequence $(d_0,\dots,d_n)$ and drawing a vertical or a horizontal step whenever we encounter a $k+1$ or a $k+2$, respectively.
\end{itemize}

\begin{figure}[t]
  \resizebox{\textwidth}{!}{
    \begin{tikzpicture}[scale=0.9]
      \draw[dotted] (0,  0) grid (14, 14);
      \draw[dotted] (0,  0) --   (14, 14);
      \drawPath%
      {1,1,0,1,1,1,0,1,1,1,0,0,1,0,1,0,1,0,0,0,0,1,1,0,0,0,1,0} 
      {0}{black}{(0,0)}{2}
      \drawLabel%
      {}{}
      {\coa}   {3,5,7,6,6,6,6,5,4,3,4,3,2,2,1} 
      \draw[fill=black] (0.5, 2) circle (.15);
      \node at ( 0.5, 1.5) {14};
      \draw[fill=black] (1.5, 5) circle (.15);
      \node at ( 1.5, 4.5) {13};
      \draw[fill=black] ( 4.5, 9) circle (.15);
      \node at ( 4.5, 8.5) {10};
      \draw[fill=black] ( 5.5,10) circle (.15);
      \node at ( 5.5, 9.5) {9};
      \draw[fill=black] (13.5,14) circle (.15);
      \node at (13.5,13.5) {1};
    \end{tikzpicture}
    \hfill
    \begin{tikzpicture}[scale=0.9]
      \draw[dotted] (0,  0) grid (14, 14);
      \draw[dotted] (0,  0) --   (14, 14);
      \drawPath%
      {1,0,1,1,1,1,1,0,0,0,0,1,0,0,1,1,1,0,1,0,1,0,0,0,1,1,0,0} 
      {0}{black}{(0,0)}{2}
      \drawPath%
      {1,0,1,1,1,1,0,0,0,0,1,1,0,0,1,1,0,0,1,1,1,0,0,0,1,1,0,0} 
      {0}{lightgrey, dotted, line width=2}{(0,0)}{2}
      \drawLabel%
      {c}      {3,2,4,4,4,3,2,3,6,5,4,3,2,2,1} 
      {\coa}   {2,6,5,4,3,3,2,4,4,4,3,2,3,2,1} 
      \node at (13.2,12.8) {\bf 2};
      \node at (10.7,10.3) {\bf 3};
      \node at ( 8.2, 7.8) {\bf 4};
      \node at ( 6.2, 5.8) {\bf 5};
      \node at ( 3.2, 2.8) {\bf 6};
      \node at ( 0.7, 0.3) {\bf 7};
      \node[blue] at ( 2.5, 5.7) {10};
      \node[blue] at ( 3.5, 5.7) {9};
      \node[blue] at ( 5.5, 6.7) {13};
      \node[blue] at ( 9.5,11.7) {14};
      \node[blue] at (13.5,13.7) {1};
      \node[red] at ( 0.7, 2.5) {10};
      \node[red] at ( 0.7, 3.5) {9};
      \node[red] at ( 0.7, 5.5) {13};
      \node[red] at ( 6.7, 9.5) {14};
      \node[red] at (11.7,13.5) {1};
    \end{tikzpicture}
  }
  \caption{A Dyck path of semilength~$14$ and its image $\zeta(D)$ under the zeta map.}
  \label{fig:zetamap}
\end{figure}
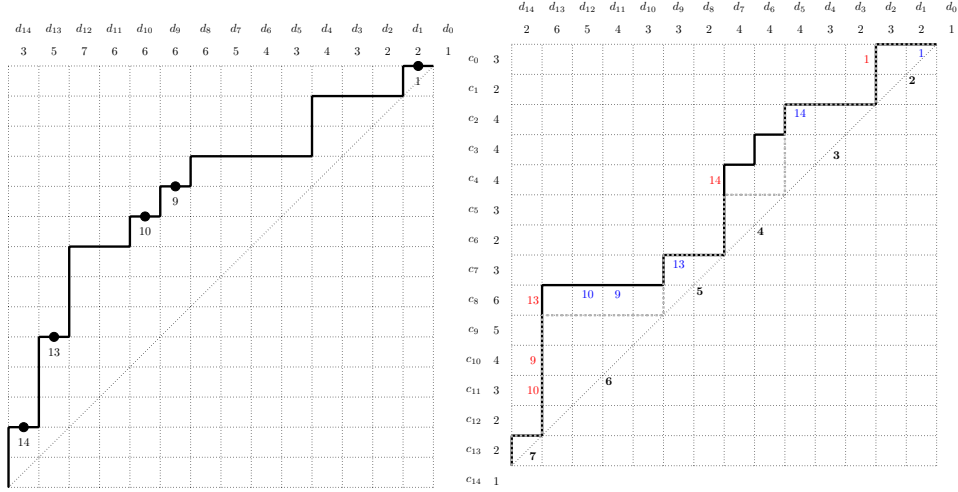
In the example in \Cref{fig:zetamap}, a Dyck path~$D$ (on the left) with coarea sequence
\[
  [1,2,2,3,4,3,4,5,6,6,6,6,7,5,3]
\]
and its image~$\zeta(D)$ (on the right) under the zeta map is shown.
In dotted grey, the intermediate bounce path is shown.

\medskip

For a given Dyck path with coarea sequence $(d_0,\dots,d_n)$, the definition of the zeta map yields a labelling of the vertical steps and of the horizontal steps of~$\zeta(D)$ with the indices $\{ 1,\ldots,n\}$ by associating to $1 \leq j \leq n$ the vertical and the horizontal step drawn using the entry~$d_j$.
In the example in \Cref{fig:zetamap}, the vertical steps are labelled from top to bottom by the permutation $[1,2,3,5,14,4,6,7,13,8,9,10,11,12]$ as are the horizontal steps from right to left.
In symbols and in terms of the inverse permutation, the vertical step of~$\zeta(D)$ labelled~$j$ for $1 \leq j \leq n$ has initial $x$-coordinate
\[
  k(j) = \#\big\{0 \leq i \leq n\ :\ d_i < d_j \big\} + \#\big\{ 0 \leq i < j\ :\ d_i = d_j \big\} - 1
\]
for the coarea sequence $(d_0,\dots,d_n)$ of~$D$, and the horizontal step labelled by~$j$ has final $y$-coordinate $k(j) + 1$.
In the example, the $k(j)$ for $1 \leq k \leq 14$ is given by $[0,1,2,5,3,6,7,9,10,11,12,13,8,4]$.

\medskip

We then have the following alternative to \Cref{thm:1-rises-2-hills}\eqref{it:1reg1rise}.

\begin{theorem}
\label{thm:1-regzeta}
  Let~$D$ be a Dyck path of semilength~$n$ and let~$A$ be the Nakayama algebra corresponding to~$\zeta(D)$.
  Then~$D$ has
  \begin{itemize}
    \item a peak with $y$-coordinate~$j$ if and only if the simple~$A$-module~$S_{k(j)}$ has projective dimension~$1$, and
    \item a $1$-rise with $y$-coordinate~$j$ if and only if~$S_{k(j)}$ is $1$-regular.
  \end{itemize}
\end{theorem}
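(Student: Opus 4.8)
The plan is to combine the algebraic characterizations of projective dimension one and of $1$-regularity with a direct analysis of the zeta map. Write $[c_0,\dots,c_n]$ for the Kupisch series of $A$, which is the area sequence of $\zeta(D)$, and $[\hat d_0,\dots,\hat d_n]$ for its coKupisch series, the coarea sequence of $\zeta(D)$; these must be kept apart from the coarea sequence $(d_0,\dots,d_n)$ of $D$ itself. Since $k(j)\in\{0,\dots,n-1\}$, the module $S_{k(j)}$ is never the projective simple $S_n$, so by \Cref{smallprojdim}\eqref{it:pd1} we have $\pd(S_{k(j)})=1$ iff $c_{k(j)}-c_{k(j)+1}=1$, and by \Cref{regchara}\eqref{it:1reg} the module $S_{k(j)}$ is $1$-regular iff moreover $\hat d_{k(j)+1}-\hat d_{k(j)}=1$. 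Using that $c_i-c_{i+1}$ equals $1$ minus the number of horizontal steps of $\zeta(D)$ with $x$-coordinate $i+1$, and dually that $\hat d_{i+1}-\hat d_i$ equals $1$ minus the number of vertical steps at $y$-level $i$, the first condition says exactly that the vertical step of $\zeta(D)$ labelled $j$ (which runs from $x=k(j)$ to $x=k(j)+1$) is immediately followed by another vertical step, and the second says that the horizontal step labelled $j$ (which ends at $y=k(j)+1$) is immediately preceded by another horizontal step.

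On the side of $D$ I would first record the dual of the usual area identity, namely that the number of vertical steps of $D$ at height $y=j$ equals $d_j-d_{j+1}+1$. Hence $D$ has a peak with $y$-coordinate $j$ precisely when $d_j\ge d_{j+1}$, and an isolated horizontal step (a $1$-rise) ending at $y=j$ precisely when $d_{j-1}\ge d_j\ge d_{j+1}$, i.e.\ when $D$ has peaks at both $y=j-1$ and $y=j$. Thus the theorem reduces to the two local equivalences
\[
  \text{[V-step labelled $j$ followed by a V-step]} \Longleftrightarrow d_j\ge d_{j+1},
  \qquad
  \text{[H-step labelled $j$ preceded by an H-step]} \Longleftrightarrow d_{j-1}\ge d_j .
\]

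The heart of the argument is establishing these two equivalences from the construction of $\zeta(D)$. Here the labelling is exactly the stable sort of the positions $0,\dots,n$ by coarea value: the vertical steps of $\zeta(D)$ in order of increasing $x$-coordinate carry weakly increasing values $d_j$, the step labelled $j$ having value $v=d_j$ and being drawn in the rectangle determined by $v$, whose vertical steps come from entries of value $v$ and whose horizontal steps come from entries of value $v+1$. Scanning the coarea sequence of $D$ from left to right, the step drawn immediately after the vertical step labelled $j$ is, as long as we remain in that rectangle, the one coming from the next entry of value $v$ (a vertical step) or $v+1$ (a horizontal step), whichever occurs first. The decisive observation is that the coarea sequence of $D$ increases by at most $1$ at each step, since its reverse is a Kupisch series and hence satisfies \Cref{eq:relative}; so the value $v+1$ can never be reached without first passing through $v$. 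Consequently, if $d_{j+1}\le v$ then no entry of value $v+1$ can occur before the next entry of value $v$, forcing a vertical step next, whereas if $d_{j+1}=v+1$ the very next entry produces a horizontal step. This gives the first equivalence, and the transpose argument applied to the horizontal steps and the entry $d_{j-1}$ gives the second.

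The step I expect to be the main obstacle is precisely this bookkeeping of the scan across the boundary between two consecutive rectangles, namely when $d_j$ is the last entry of its value, so that the next vertical step lies in a later rectangle; there one must check, again using that the coarea increases by at most $1$, that the first entry of the next present value $v'$ precedes the first entry of value $v'+1$, so that the next rectangle indeed opens with a vertical step rather than a horizontal one. The remaining care is with the degenerate cases corresponding to the extreme coordinates (the peaks and $1$-rises at $y=1$ or $y=n$, and the unique minimal entry $d_0=1$, which contributes no horizontal step), exactly as in the degenerate cases of \Cref{thm:LK}. Granting these, the two displayed equivalences combine with the reductions of the first paragraph: a peak at $y=j$ matches $\pd(S_{k(j)})=1$, and a $1$-rise at $y=j$ matches the conjunction of both conditions, that is, the $1$-regularity of $S_{k(j)}$.
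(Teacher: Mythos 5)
Your proposal is correct and follows essentially the same route as the paper: reduce via \Cref{smallprojdim} and \Cref{regchara} to the conditions $c_{k(j)}-c_{k(j)+1}=1$ and $\hat d_{k(j)+1}-\hat d_{k(j)}=1$ on $\zeta(D)$, translate these into the local step conditions on the labelled vertical and horizontal steps, and match them against the coarea sequence of $D$ (your ``peaks at $y=j-1$ and $y=j$'' is equivalent to the paper's ``peak at $y=j$ and valley at $y=j-1$''). The only difference is one of detail: the paper asserts the equivalence ``$d_j\geq d_{j+1}$ iff the vertical step labelled $j$ is followed by another vertical step'' without justification, whereas you supply the scanning argument based on the fact that the coarea sequence increases by at most one, including the cross-rectangle boundary case.
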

We remark that the number of $1$-regular modules in LNakayama algebras and the number of $1$-rises in Dyck paths seem to have a symmetric joint distribution.
It may be interesting to find an appropriate bijection.

\medskip

The crucial observation for the proof of \Cref{thm:1-regzeta} is the following lemma.
\begin{lemma}
\label{prop:pd1zeta}
  Let~$D$ be a Dyck path of semilength~$n$ and let $(c_0,\dots,c_n)$ and $(d_0,\dots,d_n)$ be the area and, respectively, the coarea sequence of $\zeta(D)$.
  Then,
  \begin{itemize}
   \item for any $1 \leq j \leq n$, the path~$D$ has a peak with $y$-coordinate~$j$ if and only if $c_{k(j)} - c_{k(j)+1} = 1$, and
   \item for any $2 \leq j \leq n$, the path~$D$ has a valley with $y$-coordinate~$j-1$ if and only if $d_{k(j)+1} - d_{k(j)} = 1$.
  \end{itemize}
\end{lemma}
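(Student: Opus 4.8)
The plan is to show that a single local feature of the coarea sequence of~$D$ governs simultaneously (i) the presence of a peak/valley of~$D$ and (ii) the consecutive difference of the area/coarea sequence of~$\zeta(D)$ at the transported index~$k(j)$. Throughout I write $(e_0,\dots,e_n)$ for the coarea sequence of~$D$ itself (so that $k(j)$ is the rank of~$e_j$ among $e_0,\dots,e_n$, ordered by value with ties broken by position), reserving $(c_0,\dots,c_n)$ and $(d_0,\dots,d_n)$ for the area and coarea of~$\zeta(D)$.

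First I would reduce the peak/valley conditions on~$D$ to~$(e_0,\dots,e_n)$. Writing $h_j=j-e_j+1$ for the $x$-coordinate at which~$D$ enters the column with $y$-coordinate~$j$, the number of vertical steps~$D$ makes in that column is $h_{j+1}-h_j=1-(e_{j+1}-e_j)$; in particular this shows that a coarea sequence increases by at most one. The count is positive exactly when $e_{j+1}\le e_j$, and then the column carries both a peak (at its top) and a valley (at its bottom), whereas $e_{j+1}=e_j+1$ yields a double rise with neither. Hence~$D$ has a peak with $y$-coordinate~$j$ iff $e_{j+1}\le e_j$, and a valley with $y$-coordinate~$j-1$ iff $e_j\le e_{j-1}$; the degenerate columns at $j=1$ and $j=n$ I would record as boundary returns.

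Second I would translate the two differences on~$\zeta(D)$ into step adjacencies. For any Dyck path the difference $c_m-c_{m+1}$ equals one exactly when the vertical steps at $x$-coordinates~$m$ and~$m+1$ lie at the same height, i.e. form a double fall; dually $d_{m+1}-d_m=1$ exactly when the two horizontal steps bounding the column with $y$-coordinate~$m$ are adjacent, i.e. form a double rise. By the labelling, the vertical step at $x$-coordinate~$k(j)$ and the horizontal step exiting the column with $y$-coordinate~$k(j)$ are the two steps drawn from the entry~$e_j$, while the step at $x$-coordinate~$k(j)+1$ (resp. the horizontal step at the previous $y$-level) is the one drawn from the coarea entry of the next (resp. previous) rank. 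So part~(1) amounts to deciding when the vertical steps of~$e_j$ and of its successor are adjacent, and part~(2) to the analogous question for horizontal steps and the predecessor.

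The heart of the argument is to read off this adjacency from the rectangle-filling description of~$\zeta$. Put $v=e_j$. The vertical step of~$e_j$ lies in the region whose vertical steps record~$v$ and whose horizontal steps record~$v+1$, and there the path is precisely the interleaving of these steps in the order in which~$v$ and~$v+1$ are met while scanning $(e_0,\dots,e_n)$. Thus the step following the vertical step of~$e_j$ is horizontal exactly when the first value in $\{v,v+1\}$ met after position~$j$ is~$v+1$. If $e_{j+1}=v+1$ this occurs immediately, so there is no double fall and no peak; if $e_{j+1}\le v$, then since the coarea rises by at most one it must return to~$v$ before it can reach~$v+1$ again, so the next recorded value is~$v$, producing a double fall, matching the peak. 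Scanning backward from position~$j$ with the roles of~$v-1$ and~$v$ interchanged identifies the horizontal double rise with $e_j\le e_{j-1}$, hence with the valley at $y$-coordinate~$j-1$.

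I expect the main obstacle to be the case where the vertical step of~$e_j$ is the last one recording~$v$, so that its successor lives in the next rectangle. Here one must verify, under the peak hypothesis $e_{j+1}\le v$, that no entry equal to~$v+1$ occurs after position~$j$ (otherwise reaching~$v+1$ later would force an intermediate~$v$, contradicting maximality); this makes that vertical step the genuine final step of its rectangle, and the increase-by-one property also forces the next rectangle to begin with a vertical step, since the first occurrence of~$v+1$ precedes that of~$v+2$. Consequently the two vertical steps meet at the peak of the intervening bounce corner at equal height, so the double fall survives the rectangle crossing exactly when~$D$ has a peak; the symmetric bookkeeping settles the horizontal case, and the boundary returns at $j=1$ and $j=n$ fall out of the same analysis.
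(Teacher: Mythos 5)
Your proposal is correct and follows essentially the same route as the paper's proof: translate ``peak/valley of $D$ at $y$-coordinate $j$'' into the local coarea condition $e_{j+1}\le e_j$ (resp.\ $e_j\le e_{j-1}$), and observe that this is equivalent to the vertical (resp.\ horizontal) step of $\zeta(D)$ labelled $j$ being immediately followed (resp.\ preceded) by the step of the adjacent rank, which by definition of the area and coarea sequences is the condition $c_{k(j)}-c_{k(j)+1}=1$ (resp.\ $d_{k(j)+1}-d_{k(j)}=1$). The only difference is that you spell out, via the increase-by-at-most-one property of coarea sequences and the rectangle-crossing edge case, the adjacency claim that the paper asserts ``by definition''.
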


\begin{proof}
  Observe first that for any $1 \leq j \leq n$, the vertical step of~$\zeta(D)$ labelled with~$j$ corresponds to the entry~$c_{k(j)}$ of its area sequence.

  Let now $(d_0',\dots,d_n')$ be the coarea sequence of~$D$ and let~$1 \leq j \leq n$.
  Then~$D$ has a peak with $y$-coordinate~$j$ if and only if $d_j' \geq d_{j+1}'$.
  This is the case if and only if the vertical step of $\zeta(D)$ labelled with~$j$ is followed by another vertical step.
  By definition, this is the case if and only if $c_{k(j)} - c_{k(j)+1} = 1$.
  This proves the claim in the first bullet point.

  The claim in the second bullet point follows from the same argument applied to horizonal steps instead of vertical steps.
\end{proof}

\begin{proof}[Proof of \Cref{thm:1-regzeta}]
  Let $2 \leq j \leq n$.
  Then~$D$ has a $1$-rise with $y$-coordinate~$j$ if and only if it has both a peak with $y$-coordinate~$j$ and also a valley with $y$-coordinate~$j-1$.
  The statement now follows from~\Cref{smallprojdim}\eqref{it:pd1} and \Cref{regchara}\eqref{it:1reg}.
  The boundary case $j = 1$ follows from the observation that $k(1) = 0$, implying $d_{k(j)} = 1$ and $d_{k(j)+1} = 2$.
\end{proof}

In the example in \Cref{fig:zetamap}, the $1$-rises in~$D$ are marked in columns $1,9,10,13,14$.
For each $1$-rise, the corresponding horizontal and the corresponding vertical step is marked with the given letter inside~$\zeta(D)$ in blue and in red, respectively.
This means that the Nakayama algebra for~$\zeta(D)$ has $1$-regular simple module
\[
  \{ S_{k(1)}, S_{k(9)}, S_{k(10)}, S_{k(13)}, S_{k(14)} \} = \{ S_0, S_{10}, S_{11}, S_8, S_4\}
\]
and simple modules
\[
  \{ S_{k(4)}, S_{k(8)}, S_{k(12)} \} = \{ S_{5}, S_{9}, S_{13} \}
\]
of projective dimension~$1$ that are not $1$-regular.

\subsection{Description in terms of Dyck path statistics for CNakayama algebras}

To extend \Cref{thm:LK} to CNakayama algebras, we introduce an
analogue of the Lalanne-Kreweras involution for certain periodic Dyck
paths.

Let us first specify a map $\LKP$ on the set $\DyckP_n$ of $n$-periodic
Dyck paths with global shift~$0$ and non-constant area sequence.
Given a path~$D$ in this set we essentially use \Cref{dfn:LK} to
construct $\LKP(D)$.  For item~(3) of this definition, we fix any
return of~$D$ to the diagonal, and stipulate that we mark the
intersection of the~$i$-th vertical line \emph{after this return}
with the~$i$-th horizontal line \emph{after this return}.

Since the number of double rises equals the number of double falls between any two returns of $D$, this definition does not depend on the return chosen.
Let us emphasize however, that $\LKP(D)$ is not necessarily in $\DyckP_n$.  For example, the image of $[3,3,2]_\circlearrowright$ equals $[5,4,3]_\circlearrowright$, which has global shift $1$.

To circumvent this defect, let $\DyckH_n$ be the set of $n$-periodic Dyck paths that have a rectangle as defined in \Cref{def:Dyck-statistics}.  We will see below that the image of $\LKP$ is exactly $\DyckH_n$.
Moreover, we will see that the CNakayama algebras corresponding to $\DyckH_n$ are precisely those which are quasi-hereditary.

Let us now describe the inverse $\LKH$ of $\LKP$ explicitly.  Again, we
essentially use \Cref{dfn:LK} to construct the image of a path~$D$ in
$\DyckH_n$.  However, since~$D$ may not have any returns to the
diagonal, we have to make item~(3) of the definition precise in a
different way.  Specifically, we fix any index~$j$ such that~$D$ has a
rectangle with $y$-coordinate~$j$.
We then stipulate that the \lq first\rq\ horizontal line
has $x$-coordinate $j+1$, and the \lq first\rq\ vertical line has
$y$-coordinate $j+1$.  In particular, the image $\LKH(D)$ of~$D$ has a return
at $j+1$.

\begin{theorem}\label{thm:LK-periodic}
  Let $\DyckH_n$ be the set of~$n$-periodic Dyck paths with a
  rectangle.  Then the map $\LKP$ is a bijection between $\DyckP_n$
  and $\DyckH_n$, with inverse $\LKH$.  Moreover, $\LKP$ is an
  involution on $\DyckP_n\cap\DyckH_n$.
\end{theorem}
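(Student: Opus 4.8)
The plan is to build everything on two facts about the \emph{ordinary} Lalanne--Kreweras involution: that it is an involution, and the precise return/rectangle correspondence of \Cref{thm:rectangle-return}. The periodic maps $\LKP$ and $\LKH$ use exactly the same local data as \Cref{dfn:LK}, namely the double rises and double falls of the path; the only genuinely new point is how the pairing of vertical and horizontal lines is anchored in the absence of canonical endpoints. I would therefore organise the proof around transferring the anchor between a \emph{return} of the domain path and a \emph{rectangle} of the image path.

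First I would settle well-definedness of $\LKP$. Between any two returns of $D$ the traversed segment is itself a Dyck path, so its number of double rises equals its number of double falls; consequently, moving the anchor from one return to another cyclically rotates the ordered list of vertical lines and the ordered list of horizontal lines by the \emph{same} amount. The pairing \lq\ $i$-th vertical with $i$-th horizontal\rq\ is therefore unchanged, so the marked intersection points, and hence $\LKP(D)$, do not depend on the chosen return. (Non-constancy of the area sequence guarantees that there are lines to pair at all.)

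Next I would prove the return $\leftrightarrow$ rectangle correspondence underlying $\LKP(\DyckP_n)\subseteq\DyckH_n$ and $\LKH(\DyckH_n)\subseteq\DyckP_n$. Cutting $D\in\DyckP_n$ open at the chosen return yields an ordinary Dyck path $\tilde D$ of semilength $n$ whose double rises and double falls, and whose anchored line-pairing, coincide with those of one period of $D$; so $\LKP(D)$ coincides, as an $n$-periodic path, with the periodisation of $\LK(\tilde D)$. Applying \Cref{thm:rectangle-return} to $\tilde D$ then shows the anchoring return produces a rectangle of $\LKP(D)$ at the position dictated by the coordinate identity $j+1=i+c_i$, whence $\LKP(D)\in\DyckH_n$; reading the same correspondence in reverse (using that ordinary $\LK$ is an involution) shows that a rectangle of $D$ produces a return of $\LKP(D)$, which is what forces the change of global shift visible in the example $[3,3,2]_\circlearrowright\mapsto[5,4,3]_\circlearrowright$. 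The symmetric analysis gives $\LKH(D')\in\DyckP_n$: the path $\LKH(D')$ has a return at $j+1$ by construction, hence global shift $0$, and it has non-constant area since it has a rectangle.

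Finally I would deduce that $\LKH$ and $\LKP$ are mutually inverse, and then the involution statement. Starting from $D\in\DyckP_n$ with return $\rho$, the rectangle of $\LKP(D)$ produced above is precisely the one whose $y$-coordinate $\LKH$ adopts as its first line, so running $\LKH$ on $\LKP(D)$ anchored there reduces, after cutting, to applying ordinary $\LK$ a second time; involutivity gives $\LKH(\LKP(D))=D$. This already shows $\LKP$ is injective; the symmetric computation $\LKP(\LKH(D'))=D'$ gives surjectivity, and injectivity of $\LKP$ then forces $\LKH(D')$ to be independent of the chosen rectangle, so $\LKH=\LKP^{-1}$. For the involution on $\DyckP_n\cap\DyckH_n$ I would check that for such a path the two anchoring conventions select the same pairing, so $\LKP=\LKH$ there; combined with the fact that $\LKP$ maps the intersection into itself (its image has a rectangle because $D\in\DyckP_n$ and a return because $D\in\DyckH_n$, hence global shift $0$ and non-constant area), this yields $\LKP(\LKP(D))=\LKH(\LKP(D))=D$. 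The main obstacle I anticipate is exactly this anchor bookkeeping: verifying that the \lq after a return\rq\ convention of $\LKP$ and the \lq first line at a rectangle\rq\ convention of $\LKH$ are genuinely inverse, getting the off-by-one shift $j+1=i+c_i$ and the global-shift change right, and confirming that cutting at different returns yields the same periodic image even though the ordinary $\LK$-images of the cut paths differ as finite paths.
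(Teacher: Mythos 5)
Your overall strategy --- reducing to the ordinary Lalanne--Kreweras involution and \Cref{thm:rectangle-return} by cutting the periodic path open at a return --- is the same as the paper's, but your reduction cuts out a \emph{single} period, and this is where the argument breaks. The claim that ``$\LKP(D)$ coincides, as an $n$-periodic path, with the periodisation of $\LK(\tilde D)$'' for the one-period cut $\tilde D\in\Dyck_n$ is false, and the paper's own example refutes it. Take $D=[3,3,2]_\circlearrowright$. Cutting at its unique return gives $\tilde D=[3,3,2,1]$, which is a fixed point of the ordinary $\LK$ (by \Cref{thm:LK} its image must have a $1$-cut at position $1$ and nowhere else, since $S_1$ is the unique $1$-regular module, and $[3,3,2,1]$ is the only semilength-$3$ path with that property); its periodisation is therefore $[3,3,2]_\circlearrowright$ again, with global shift $0$. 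But $\LKP(D)=[5,4,3]_\circlearrowright$, which has global shift $1$ and even a different underlying necklace ($h^3v^3$ rather than $h^2vhv^2$). The reason is that an ordinary Dyck path is forced to begin and end on the main diagonal: $\LK(\tilde D)$ is the path determined by the marked intersections \emph{together with} the endpoint constraints at $(0,0)$ and $(n,n)$, whereas $\LKP(D)$ is determined by the marked intersections alone. Periodising $\LK(\tilde D)$ therefore inserts spurious returns at every period boundary and in particular always produces a path of global shift $0$. What is true is only that the valleys of $\LK(\tilde D)$ agree with the marked valleys of $\LKP(D)$; the step sequences do not.

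This gap propagates: since $\LKP(D)$ and the periodisation of $\LK(\tilde D)$ have different double rises and double falls (in the example, two of each per period versus one), running $\LKH$ on $\LKP(D)$ is \emph{not} the same as applying the ordinary $\LK$ a second time, so involutivity of the ordinary $\LK$ cannot be transferred through your identification, and the proofs that $\LKH\circ\LKP=\mathrm{id}$ and that $\LKP$ is an involution on $\DyckP_n\cap\DyckH_n$ collapse with it. The paper avoids exactly this by cutting out a window of \emph{three} periods, ending at a return that is not the final step of a $1$-hill, and reading off $\LKP(D)$ from the \emph{middle} period of $\LK(\tilde D)$: that period is insulated from the boundary effects at $(0,0)$ and $(3n,3n)$, so there the two paths genuinely coincide step by step, and the involutivity argument goes through. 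Your first paragraph (well-definedness of $\LKP$ via balancing double rises and double falls between consecutive returns) is correct and matches the paper; the rest needs the wider window.
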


Essentially, this theorem allows us to extend $\LKP$ to a map on the
union $\DyckP_n\cup\DyckH_n$.  Of course, whenever two maps agree on
the intersection of their domains, one can regard them as a single
map.  However, in the case at hand this is particularly interesting,
because the definitions of $\LKP$ and its inverse $\LKH$ are so
similar.
\begin{definition}
  The \Dfn{generalized Lalanne-Kreweras involution} $\LK$ is the map
  \begin{align*}
    \LK&: \DyckP_n\cup\DyckH_n\to\DyckP_n\cup\DyckH_n\\
    D&\mapsto
    \begin{cases}
      \LKP(D) & \text{if $D\in\DyckP_n$}\\
      \LKH(D) & \text{if $D\in\DyckH_n$.}
    \end{cases}
  \end{align*}
\end{definition}
This is well-defined, because $\LKP$ is an involution on
$\DyckP_n\cap\DyckH_n$.

\begin{proof}[Proof of~\Cref{thm:LK-periodic}]
  Let $D\in\DyckP_n$ and let $\tilde D\in\Dyck_{3n}$ be the Dyck path obtained from $D$ by restricting it to $3$ periods, ending with a return which is not the final step of a $1$-hill.
  Such a return exists because $D$ has non-constand area sequence.
  By \Cref{thm:rectangle-return}, $\tilde E = \LK(\tilde D)$ has a rectangle with $y$-coordinate $n-1$.

  By definition of the classical Lalanne-Kreweras involution and the
  definition of $\LKP$, the positions of the valleys of $\tilde E$
  coincide with the positions of the valleys of $E = \LKP(D)$ in the
  corresponding region - there are only additional peaks in
  $\tilde E$ at the beginning and the end of the period.

  In particular, $E$ also has a rectangle with $y$-coordinate $n-1$, and therefore, the mirrored path has a double horizontal step whose midpoint has $y$-coordinate $n$, and a double vertical step whose midpoint has $x$-coordinate $n$.
  Let $\LKH_n(E)$ be the path constructed from $E$ by specifying that the \lq first\rq\ horizontal line has $x$-coordinate $n$ and the \lq first\rq vertical line has $y$-coordinate $n$.
  Thus, also the following horizontal and vertical lines used to construct $\LKH_n(E)$ match up in the same way as they do to construct $\LK(\tilde E)$.
  In particular, $\LKH_n(E)$ and $\LK(\tilde E)$ coincide between $(n,n)$ and $(2n,2n)$.
  Since $\LKH_n(E)$ is determined by this region, and $\LK$ is an involution, $\LKH_n$ is indeed the inverse of $\LKP$, and $\LKP$ is an involution on $\DyckP_n\cap\DyckH_n$.

  To see that $\LKH_n$ does not depend on the rectangle chosen, note that, by \Cref{thm:rectangle-return}, each rectangle of $\tilde E$ corresponds to a return of $\tilde D$, which in turn corresponds to a return of $D$.
\end{proof}

Let us now turn to a description of quasi-hereditary Nakayama algebras.
We briefly recall the general definition, and then give an alternative description for the case of Nakayama algebras.

Let~$A$ be a quiver algebra and let $e\defeq(e_1,e_2,\dots,e_n)$ denote an ordered complete set of primitive orthogonal idempotents of~$A$, where complete means that ${\bf 1}_A=\sum\limits_{k=1}^{n}{e_k}$.
For $i \in \{1,\dots,n\}$, set $\epsilon_i\defeq e_i+e_{i+1}+\dots+e_n$, and also set $\epsilon_{n+1}\defeq 0$.
Moreover, define the \Dfn{right standard modules} $\Delta(i)\defeq e_i A/e_iA \epsilon_{i+1}A$ and dually the \Dfn{left standard modules} $\Delta(i)^{op}$ as the right standard modules of the opposite algebra of~$A$.
Define the \Dfn{right costandard modules} then as $\nabla(i)\defeq D( \Delta(i)^{op})$.
An algebra~$A$ is then called \Dfn{quasi-hereditary} in case there is an ordering $e\defeq(e_1,e_2,\dots,e_n)$ such that $\End_A(\Delta(i))$ is a division algebra for all~$i$ and $\Ext_A^2(\Delta(i),\nabla(j))=0$ for all $i$ and $j$.

Note that we used here one of the many characterizations of quasi-hereditary algebras and we refer~\cite[Theorem~A.2.6]{DK} for many more equivalent characterizations.
It is well known that any quiver algebra with an acyclic quiver is quasi-hereditary and thus every LNakayama algebra is quasi-hereditary.
Not all CNakayama algebras are quasi-hereditary, but there is an easy homological characterization as the next proposition shows.
We remark that the more general class of standardly stratified Nakayama algebras has been recently classified in~\cite{MM}.
\begin{proposition}[\protect{\cite[Proposition~3.1]{UY}}]
\label{quasi-heredchara}
 A CNakayama algebra is quasi-hereditary if and only if it has a simple module of projective dimension~$2$.
\end{proposition}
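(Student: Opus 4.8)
The plan is to realise the quasi-hereditary structure through the standard modules attached to a cyclic cut of the quiver, and to match the resulting good-filtration condition with the projective-dimension-two criterion from \Cref{smallprojdim}. Write $\sigma(i)=i+c_i$ for the residue modulo~$n$ with the property that $\Omega^2(S_i)=\ind_{\sigma(i),\,c_{i+1}-c_i+1}$; by \Cref{smallprojdim}\eqref{it:pd2} the simple $S_i$ has projective dimension~$2$ exactly when $c_i\le c_{i+1}$ and this second syzygy is the projective module $e_{\sigma(i)}A$, that is, when $c_{\sigma(i)}=c_{i+1}-c_i+1$. Because $c_j\ge 2$ for every CNakayama algebra no simple module is projective, so a simple of projective dimension~$2$ is precisely a rectangle in the sense of \Cref{def:Dyck-statistics}. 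The statement thus becomes: such a rectangle exists if and only if $A$ is quasi-hereditary.

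First I would fix the cyclic order that makes some vertex~$v$ maximal, so that $v+1<v+2<\dots<v$ modulo~$n$. Since every $e_wA$ is uniserial the standard modules are read off immediately: $\Delta(w)=S_w$ for $w\ne v$, while $\Delta(v)=e_vA$ is the unique non-simple standard. I would then use that $A$ is quasi-hereditary for this order if and only if each $\Delta(i)$ is Schurian and every indecomposable projective $e_wA$ has a $\Delta$-filtration, the costandard condition being automatic. Schurianness of $\Delta(v)=e_vA$ just says that $e_vAe_v$ is a division ring, i.e.\ $c_v\le n$. For the filtrations, uniseriality forces every step of a $\Delta$-filtration of $e_wA$ to be either a single factor $S_j$ with $j\ne v$ or a full length-$c_v$ copy of $\Delta(v)$ headed by an occurrence of~$S_v$; hence $e_wA$ is $\Delta$-filtered precisely when every occurrence of $S_v$ among its composition factors starts a copy of $e_vA$ that fits before the socle. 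A short computation rewrites this as $\sigma(w)\notin\{v+1,\dots,v+c_v-1\}$. Therefore $A$ is quasi-hereditary if and only if there is a vertex~$v$ with $c_v\le n$ whose following block $\{v+1,\dots,v+c_v-1\}$ of $c_v-1$ vertices avoids the image of~$\sigma$.

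For the direction from a rectangle to quasi-heredity this is now a clean computation using only~\Cref{eq:relative}. Given a simple $S_i$ of projective dimension~$2$ I would set $v=\sigma(i)$, so that $c_v=c_{i+1}-c_i+1$ and the block equals $\{i+c_i+1,\dots,i+c_{i+1}\}$. This equation together with $c_{t+1}\ge c_t-1$ forces the entries $c_{i+1}>c_{i+2}>\dots>c_{i+c_i}$ to drop by exactly one at each step, whence every $w$ in $\{i+1,\dots,i+c_i\}$ satisfies $\sigma(w)=i+c_{i+1}+1=v+c_v$. For $w\le i$ telescoping $c_w\le c_{w+1}+1$ gives $\sigma(w)\le i+c_i=v$, and for $w=v+k$ with $k\ge 1$ telescoping $c_{v+k}\ge c_v-k$ gives $\sigma(w)\ge v+c_v$; in all three ranges $\sigma(w)$ misses the block, so the block consists of sources and $A$ is quasi-hereditary.

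The main obstacle is the converse direction. Here one is handed a vertex~$v$ whose following block avoids the image of~$\sigma$ and must manufacture a rectangle, and the difficulty is that the inequalities of~\Cref{eq:relative} only control the Kupisch series in the forward direction, so tracing the structure backward from~$v$ to locate the peak and the unbroken descent run that witness $c_{\sigma(i)}=c_{i+1}-c_i+1$ requires a careful case analysis, including the degenerate situations where $v$ itself lies in the image of~$\sigma$ and where the intervals wrap around the cycle. A second, perhaps more transparent route for this direction would be to invoke the standard fact that quasi-hereditary algebras have finite global dimension and then to apply the Ringel--Shen resolution-quiver criterion---finite global dimension is equivalent to the functional graph of~$\sigma$ being connected with a unique cycle of weight one---showing that the absence of any rectangle forces that cycle to wind at least twice, hence infinite global dimension. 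Either way, reconciling the purely local rectangle condition with the global homological finiteness is the crux; verifying that checking projective $\Delta$-filtrations suffices and that the size bound $c_v\le n$ indeed holds for a quasi-hereditary CNakayama algebra are the remaining technical points.
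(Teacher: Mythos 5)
The paper does not actually prove this proposition: it is quoted from Uematsu--Yamagata \cite[Proposition~3.1]{UY} and used as a black box, so there is no internal argument to compare yours against. Judged on its own terms, your forward direction is essentially sound: identifying the simples of projective dimension~$2$ with rectangles is correct (since $\Omega^2(S_i)=\ind_{i+c_i,\,c_{i+1}-c_i+1}$), the standard modules for the cyclic order with maximal element $v=i+c_i$ are as you describe, and the telescoping estimates do confine $\sigma(w)=w+c_w$ to the complement of $\{v+1,\dots,v+c_v-1\}$ modulo $n$ --- provided you apply \emph{both} the upper bound $c_w\le c_{i+n}+(i+n-w)$ and the lower bound $c_{v+k}\ge c_v-k$ to each $w$ outside $\{i+1,\dots,i+c_i\}$, so that $v+c_v\le\sigma(w)\le v+n$ as integers; a one-sided bound alone does not prevent $\sigma(w)$ from wrapping around the cycle into the forbidden block. (The Schurian condition $c_v\le n$ then comes for free: otherwise the forbidden block would cover every residue and $\sigma$ would have empty image.)

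The genuine gap is the converse, which you acknowledge but do not close, and it fails at two points. First, your intermediate criterion ``$A$ is quasi-hereditary if and only if some cyclic rotation of the natural order works'' is only justified in one direction: the definition quantifies over \emph{all} orderings of the idempotents, and for a non-cyclic order there are in general several non-simple standard modules, so the claim that quasi-heredity forces a cyclic order to succeed needs its own argument (this reduction is essentially the content of the cited result for serial rings). Second, even granting that reduction, you still have to manufacture a rectangle from the hypothesis that the block following some $v$ avoids the image of $\sigma$, and neither of your two sketches is carried out: the backward case analysis is only described as difficult, and the detour through finite global dimension would additionally require proving that a CNakayama algebra of finite global dimension has a simple of projective dimension exactly~$2$, which is itself a nontrivial step you do not supply. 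As written, the proposal establishes only the ``if'' direction of the proposition.
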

Thus, by \Cref{smallprojdim}, the CNakayama algebras corresponding to $\DyckH_n$ are precisely those which are quasi-hereditary.
The new description and~\Cref{cor:Cmin} yields their number.
\begin{corollary}
\label{cor:quasihercount}
  For any $c\geq 0$, there is an explicit bijection between
  quasi-hereditary~$n$-CNakayama algebras and~$n$-CNakayama algebras
  whose Kupisch series is non-constant and has minimal entry $c+2$.
  In particular, the number of quasi-hereditary~$n$-CNakayama
  algebras is
  \[
  \frac{1}{2n}\sum_{k\divides n}\phi\big(n/k\big)\binom{2k}{k}-1.
  \]
\end{corollary}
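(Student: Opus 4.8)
The plan is to assemble the claimed bijection from results already in hand and to read off the count from \Cref{cor:Cmin}. The three ingredients are: the area-sequence correspondence between CNakayama algebras and periodic Dyck paths (\Cref{prop:CNak2Dyck}), the homological characterization of quasi-heredity (\Cref{quasi-heredchara}), and the generalized Lalanne--Kreweras bijection $\LKP\colon\DyckP_n\to\DyckH_n$ of \Cref{thm:LK-periodic}.

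First I would identify both sides of the desired bijection with sets of periodic Dyck paths. By \Cref{quasi-heredchara} a CNakayama algebra is quasi-hereditary exactly when it has a simple module of projective dimension~$2$, and by \Cref{smallprojdim}\eqref{it:pd2} together with \Cref{def:Dyck-statistics} this happens precisely when its area sequence, viewed as a periodic Dyck path, has a rectangle. Letting the minimal Kupisch entry range over all values $c+2$, \Cref{prop:CNak2Dyck} therefore restricts to a bijection between quasi-hereditary $n$-CNakayama algebras and $\DyckH_n$, the set of $n$-periodic Dyck paths possessing a rectangle. On the other side, with the global shift $c$ fixed, \Cref{prop:CNak2Dyck} identifies the $n$-CNakayama algebras of minimal Kupisch entry $c+2$ with the $n$-periodic Dyck paths of global shift~$c$; since the area sequence equals the Kupisch series, the non-constant Kupisch series correspond exactly to the paths with non-constant area sequence.

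Next I would remove the dependence on $c$. A periodic Dyck path is a balanced binary necklace together with a global shift, and passing from shift~$0$ to shift~$c$ merely adds the constant~$c$ to every entry of the area sequence, which preserves both the defining inequalities~\eqref{eq:relative} and (non-)constancy. Reinterpreting the same necklace with shift~$0$ is thus a bijection between the global-shift-$c$ paths with non-constant area sequence and the set $\DyckP_n$. The desired explicit bijection for a fixed $c$ is then the composite that sends a quasi-hereditary algebra to its area sequence in $\DyckH_n$, applies $\LKH=(\LKP)^{-1}$ to land in $\DyckP_n$, shifts the reference diagonal down by $c$, and reads off the resulting Kupisch series of minimal entry $c+2$; it is a bijection because $\LKP$ is one by \Cref{thm:LK-periodic}. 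As the quasi-hereditary side does not depend on $c$, this yields the statement for every $c\geq 0$.

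Finally, for the enumeration I would take $c=0$ and invoke \Cref{cor:Cmin}: the number of $n$-CNakayama algebras with minimal Kupisch entry~$2$ is the number of balanced binary $n$-necklaces, namely $\frac{1}{2n}\sum_{k\divides n}\phi(n/k)\binom{2k}{k}$. Among these, exactly one Kupisch series is constant, the all-$2$ series; a CNakayama algebra with constant Kupisch series is self-injective, so all its modules are projective or of infinite projective dimension and it has no simple of projective dimension~$2$, hence it is not quasi-hereditary. Discarding it gives the count of the non-constant ones, and therefore of the quasi-hereditary algebras, as $\frac{1}{2n}\sum_{k\divides n}\phi(n/k)\binom{2k}{k}-1$. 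I expect the only real care-point to be this bookkeeping: verifying that the single excluded (self-injective, constant) series corresponds simultaneously to a constant area sequence on the $\DyckP_n$ side and to the absence of a rectangle on the $\DyckH_n$ side, so that $\LKP$ indeed restricts to a bijection between the two truncated sets; all the substantial combinatorics has already been absorbed into \Cref{thm:LK-periodic}.
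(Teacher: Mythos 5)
Your proposal is correct and follows essentially the same route as the paper: identify quasi-hereditary $n$-CNakayama algebras with $\DyckH_n$ via \Cref{quasi-heredchara} and \Cref{smallprojdim}, use the bijection $\LKP\colon\DyckP_n\to\DyckH_n$ from \Cref{thm:LK-periodic} together with the shift of the reference diagonal, and subtract $1$ from the count in \Cref{cor:Cmin} for the unique constant (self-injective, hence non-quasi-hereditary) Kupisch series. The bookkeeping you flag at the end is handled exactly as you describe.
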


As an aside, we compute the size of $\DyckP_n\cap\DyckH_n$.  To do so, we recall the \Dfn{cycle construction}.
\begin{theorem}[\protect{eg.~\cite[Equation~1.4(18)]{BLL}}]
  Consider a family of mutually disjoint finite sets $(D_{n, \ell})_{n,\ell\in\NN}$ and let $D(x, q)=\sum_{n,\ell} |D_{n, \ell}| x^n q^k$ be its generating function.
  For $n,\ell\in\NN$, let $C_{n, \ell}$ be the set of cycles
  \[
  \{[d_1,\dots,d_k]_\circlearrowright \mid d_i \in D_{n_i, \ell_i}, \sum_i n_i = n, \sum_i \ell_i = \ell\}.
  \]
  Then
  \[
  |C_{n,\ell}| = \sum_{k\divides\gcd(\ell,n)}\frac{\phi(k)}{k} [x^{n/k} q^{\ell/k}] \log\frac{1}{1-D(x, q)},
  \]
  where~$\phi$ is Euler’s totient.
\end{theorem}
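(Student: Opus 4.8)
This is the classical \textbf{cycle construction} from the symbolic method, here in a bivariate guise, so the plan is to derive the full generating function for cycles and then read off the coefficient of $x^n q^\ell$. As standing conventions I record that, since each $d_i$ lies in some $D_{n_i,\ell_i}$ and both statistics are additive under concatenation, a cycle $[d_1,\dots,d_k]_\circlearrowright$ carries the well-defined weight $x^{\sum_i n_i}q^{\sum_i \ell_i}$; the mutual disjointness and finiteness of the $D_{n,\ell}$ make this weight unambiguous, and I may assume each atom has positive size, so that $D(x,q)$ has no constant term and $\log\frac{1}{1-D(x,q)}$ is a well-defined formal power series in $x$ and $q$.

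First I would count \emph{linear} sequences of a fixed length $k\geq 1$. Because the two statistics are additive, the weight generating function of ordered tuples $(d_1,\dots,d_k)$ with $d_i$ ranging over the family is exactly $D(x,q)^k$. A cycle of length $k$ is precisely an orbit of such a tuple under the rotation action of $\ZZ/k\ZZ$, and rotation preserves both total size and total secondary statistic, so the weight is constant on orbits. This is the setting of the weighted Cauchy--Frobenius (Burnside) lemma, which I would invoke to express the generating function for cycles of length~$k$ as the average, over the $k$ rotations, of the generating function of the tuples each rotation fixes.

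The crux is the fixed-point generating function of a single rotation. Rotation by $j$ partitions $\{1,\dots,k\}$ into $d\defeq\gcd(j,k)$ orbits, each of length $k/d$; a tuple is fixed exactly when it is constant on these orbits, so it is determined by a free choice of $d$ atoms, each of whose weight monomial is then repeated $k/d$ times in the product. Replacing a weight monomial by its $(k/d)$-th power is the substitution $(x,q)\mapsto(x^{k/d},q^{k/d})$, so summing over atoms gives $D(x^{k/d},q^{k/d})$ per orbit and hence $D(x^{k/d},q^{k/d})^{d}$ for the whole fixed-point set. Since exactly $\phi(k/d)$ of the $k$ rotations satisfy $\gcd(j,k)=d$, setting $m=k/d$ yields the generating function for cycles of length~$k$ as $\frac{1}{k}\sum_{m\divides k}\phi(m)\,D(x^m,q^m)^{k/m}$.

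Finally I would sum over $k\geq 1$ and reorganize: writing $k=mr$ and using $\sum_{r\geq 1}u^r/r=\log\frac{1}{1-u}$, the double sum collapses to
\[
  C(x,q)\defeq\sum_{n,\ell}|C_{n,\ell}|\,x^n q^\ell
  =\sum_{m\geq 1}\frac{\phi(m)}{m}\log\frac{1}{1-D(x^m,q^m)}.
\]
Extracting $[x^n q^\ell]$ then concludes: the $m$-th summand is a series in $x^m$ and $q^m$, so it contributes only when $m\divides n$ and $m\divides\ell$, i.e.\ $m\divides\gcd(\ell,n)$, in which case $[x^n q^\ell]\log\frac{1}{1-D(x^m,q^m)}=[x^{n/m}q^{\ell/m}]\log\frac{1}{1-D(x,q)}$; relabelling the summation index $m$ as $k$ gives the stated formula. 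The one genuinely delicate step is the weight bookkeeping in the fixed-point count, where confusing the number of orbits $d$ with the orbit length $k/d$ would place the substitution on the wrong exponent and corrupt the final answer; it is exactly the additivity of both the size and the secondary statistic that makes the substitution $(x,q)\mapsto(x^{k/d},q^{k/d})$ legitimate.
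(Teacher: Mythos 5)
The paper does not prove this statement at all: it is quoted verbatim from Bergeron--Labelle--Leroux (Equation~1.4(18) of \cite{BLL}) and used as a black box, so there is no in-paper argument to compare against. Your proof is a correct and complete derivation of the cited formula by the standard route: the weighted Cauchy--Frobenius lemma applied to the rotation action on $k$-tuples, the fixed-point count $D(x^{k/d},q^{k/d})^{d}$ for a rotation with $\gcd(j,k)=d$ (with the orbit count $d$ and orbit length $k/d$ placed on the correct exponents), the regrouping $k=mr$ into $\sum_{m\ge 1}\frac{\phi(m)}{m}\log\frac{1}{1-D(x^m,q^m)}$, and the observation that the $m$-th summand only contributes to coefficients with $m\divides\gcd(n,\ell)$. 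The only hypothesis you need to make explicit, and do, is that $D(x,q)$ has zero constant term so that the logarithm is a well-defined formal power series and each $C_{n,\ell}$ is finite.
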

\begin{proposition}
  \label{prop:quasiherminentrycount}
  The number of quasi-hereditary~$n$-CNakayama algebras whose Kupisch
  series have minimal entry $2$ equals
  \[
  \frac{1}{n}\sum_{k\divides n}\phi\big(n/k\big)\sum_{m=0}^{\lfloor\frac{k}{2}\rfloor}
  \binom{2k-2m-2}{k-2}.
  \]
\end{proposition}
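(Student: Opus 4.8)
The plan is to reduce the statement to a purely combinatorial count and then apply the cycle construction. By \Cref{prop:CNak2Dyck} with $c=0$, the $n$-CNakayama algebras whose Kupisch series has minimal entry~$2$ correspond to $n$-periodic Dyck paths of global shift~$0$; by \Cref{quasi-heredchara} together with \Cref{smallprojdim}\eqref{it:pd2}, the quasi-hereditary ones among these are exactly those having a simple module of projective dimension~$2$, that is, those having a rectangle. Since the unique path with constant area sequence (all $1$-hills) has no rectangle, having a rectangle forces the area sequence to be non-constant, so the set in question is precisely $\DyckP_n\cap\DyckH_n$. Thus it suffices to compute $|\DyckP_n\cap\DyckH_n|$.

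The key structural step is to exhibit $\DyckP_n\cap\DyckH_n$ as a cyclic composition. I would cut each such path at its rectangles; read cyclically, the rectangles delimit a sequence of blocks of total semilength~$n$, and the claim is that the number of blocks of semilength~$m$ equals the Catalan number $C_{m-1}$, with no block of semilength~$1$ (a block must contain a rectangle, which needs semilength at least~$2$). Granting this, a path in $\DyckP_n\cap\DyckH_n$ is the same datum as a necklace of such blocks of total semilength~$n$, whose generating function is $B(x)=\sum_{m\geq 2}C_{m-1}x^m = x\big(C(x)-1\big)$, where $C(x)=1+xC(x)^2$ is the Catalan generating function. The cycle construction then gives
\[
  |\DyckP_n\cap\DyckH_n| = \frac{1}{n}\sum_{k\divides n}\phi\big(n/k\big)\,a_k, \qquad a_k = k\,[x^k]\log\frac{1}{1-B(x)}.
\]

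To obtain the closed form for $a_k$ I would substitute $y=xC(x)$, which satisfies $x=y(1-y)$ and turns the block generating function into $B=y^2$, so that $\log\frac{1}{1-B}=-\log(1-y)-\log(1+y)$. Since $y/x(y)=1/(1-y)$, the Lagrange inversion formula quoted above yields
\[
  a_k = [y^{k-1}]\,\frac{2y}{1-y^2}\,(1-y)^{-k} = 2\,[y^{k-2}]\,\frac{(1-y)^{-k}}{1-y^2}.
\]
Splitting $\frac{2}{1-y^2}=\frac{1}{1-y}+\frac{1}{1+y}$ and extracting coefficients term by term produces $\binom{2k-2}{k-2}$ from the first summand and $\sum_{m=1}^{\lfloor k/2\rfloor}\binom{2k-2m-2}{k-2}$ from the second, which combine to the claimed $\sum_{m=0}^{\lfloor k/2\rfloor}\binom{2k-2m-2}{k-2}$.

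The main obstacle is the block count in the structural step: proving that the segments delimited by rectangles are enumerated by Catalan numbers and that the cyclic-composition bijection is valid. I expect to establish this by transporting the rectangle decomposition through the generalized Lalanne-Kreweras involution of \Cref{thm:LK-periodic}: by \Cref{thm:rectangle-return} the rectangles of a path correspond to returns that are not $1$-hills, and cutting at such returns decomposes a periodic path into prime Dyck paths that are not $1$-hills, of which there are exactly $C_{m-1}$ of semilength~$m$ (such a prime is a Dyck path of semilength $m-1$ bordered by an initial horizontal and a final vertical step). Care is needed because $\LK$ depends on a choice of reference return or rectangle; the arguments already used to prove \Cref{thm:LK-periodic} show that this choice is immaterial and that the decomposition is compatible with the necklace identification, which is exactly what makes the cycle construction applicable.
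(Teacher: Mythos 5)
Your reduction to counting $\DyckP_n\cap\DyckH_n$ is correct and agrees with the paper, and your generating-function work is sound: with $B(x)=x\bigl(C(x)-1\bigr)$, the substitution $y=xC(x)$ (so $x=y(1-y)$ and $B=y^2$) together with Lagrange inversion does give $a_k=\sum_{m=0}^{\lfloor k/2\rfloor}\binom{2k-2m-2}{k-2}$, and the resulting necklace counts check out for small $n$. Your route is also genuinely different from the paper's: there, the area sequence is cut at the positions where $c_i=2$ into \emph{primitive} factors, one observes that the concatenation is rectangle-free if and only if every factor is, identifies rectangle-free primitive factors of length $m$ with fixed-point-free $321$-avoiding permutations of length $m-1$ (Fine numbers) via $\BJS$, and the answer is obtained by complementation inside the balanced binary necklaces of \Cref{cor:Cmin}. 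You instead propose to count the quasi-hereditary paths directly by cutting at rectangles.

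The structural step you flag as the main obstacle is, however, a genuine gap, and the argument you sketch for it fails. Cutting $\LK(D)$ at its returns that are not $1$-hills does \emph{not} decompose it into prime Dyck paths: the pieces have the form $(\text{$1$-hill})^a\cdot(\text{prime, not a $1$-hill})$, and $\LK(D)$ can contain $1$-hills even for $D\in\DyckP_n\cap\DyckH_n$ (for instance $[3,2,2]_\circlearrowright$ lies in $\DyckP_3\cap\DyckH_3$ and contains a $1$-hill, and $\LK$ maps this set onto itself). Such pieces of semilength $m$ number $C_1+\dots+C_{m-1}$, not $C_{m-1}$; worse, every non-constant path of global shift $0$ has a return that is not a $1$-hill, so necklaces of these pieces enumerate all of $\DyckP_n$ rather than $\DyckP_n\cap\DyckH_n$ (for $n=3$ this gives $3$, not $2$). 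Your primary decomposition of $D$ at its rectangles also needs more than you supply: rectangles need not lie on the main diagonal (the $4$-periodic path with area sequence $[3,4,3,2]_\circlearrowright$ has its unique rectangle at $(1,2)$), and whether a valley is a rectangle depends on the location of the \emph{next} valley, hence on the following block, so the free concatenability required by the cycle construction is precisely what has to be proved and does not follow from \Cref{thm:rectangle-return} or \Cref{thm:LK-periodic}.

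Your final formula is nonetheless correct, and the quickest way to certify your block generating function from the paper's decomposition is the identity $\bigl(1-xF(x)\bigr)\bigl(1-x(C(x)-1)\bigr)=1-xC(x)$ for the Fine generating function $F(x)=C(x)/(1+xC(x))$, equivalent to $C=1+xC^2$; taking logarithms shows that the cycle construction applied to your blocks computes exactly (all necklaces of primes) minus (necklaces of rectangle-free primes). If you want an honestly bijective version of your decomposition, the statement to prove is that rectangles of $D$ correspond to the places where the $321$-avoiding permutation $\BJS(D)$ splits as a direct sum, so that your blocks are the sum-indecomposable $321$-avoiding permutations, counted by $C_{m-1}$ since $\sum_m C_mx^m=1/(1-xC(x))$; that argument is still missing from your proposal.
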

\begin{proof}
  Let us call an area sequence
  $[c_0,\dots,c_{n-1}]_\circlearrowright$ \Dfn{primitive}, if
  (without loss of generality) $c_{n-1}=2$ and $c_i>2$ for
  $i\neq n-1$.  Note that the concatenation of primitive area
  sequences has no rectangle if and only if none of the factors has a
  rectangle.  Thus, it is sufficient to count primitive area
  sequences without rectangle, and apply the cycle construction.

  The number of primitive area sequences of length~$n$ without
  rectangles equals the number of $321$-avoiding permutations without
  fixed points, counted by the Fine
  numbers\footnote{\OEIS{A000957}}.  This can be seen by
  interpreting $[c_0-1,\dots,c_{n-1}-1]$ as the area sequence of a
  Dyck path, and applying the Billey-Jockusch-Stanley bijection.  Fixed points
  in the resulting permutation then correspond to rectangles.
\end{proof}

\begin{theorem}
  \label{thm:LKC}
  Let~$A$ be an~$n$-CNakayama algebra, and let~$D$ be the
  corresponding $n$-periodic Dyck path.  Suppose that $D\in\DyckP_n\cup\DyckH_n$.  Then the $A$-module
  \begin{enumerate}
  \item $S_i$ is $1$-regular if and only if $\LK(D)$ has a $1$-cut at
    position~$i$,
  \item $S_i$ is $2$-regular if and only if $\LK(D)$ has a $2$-hill
    at position~$i$.
  \end{enumerate}
\end{theorem}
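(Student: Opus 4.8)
The plan is to deduce this from the LNakayama result \Cref{thm:LK} by unrolling the periodic path, exploiting that every relevant condition is local. Indeed, by \Cref{regchara} the statement that $S_i$ is $1$- or $2$-regular depends only on the entries $c_i, c_{i+1}, c_{i+2}$ of the area sequence and $d_i, d_{i+1}, d_{i+2}$ of the coarea sequence of $D$, that is, on a window of bounded width around position~$i$. Likewise, whether $\LK(D)$ has a $1$-cut or a $2$-hill at position~$i$ is a local feature of $\LK(D)$ near position~$i$. Hence it suffices to match these local features across the correspondence, period by period.

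Following the proof of \Cref{thm:LK-periodic}, I would unroll $D$ into a finite Dyck path $\tilde D \in \Dyck_{3n}$ consisting of three consecutive periods. If $D\in\DyckP_n$ its area sequence is non-constant, so $D$ has a return, and we cut at a return which is not the final step of a $1$-hill; if $D\in\DyckH_n$ we instead anchor at a rectangle. In either case, the proof of \Cref{thm:LK-periodic} already shows that the generalized involution $\LK(D)$ coincides, throughout the central period, with the classical $\LK(\tilde D)$ there: the two paths have the same valleys, hence the same peaks, $1$-cuts and $2$-hills, away from the artificial boundary steps introduced by the unrolling.

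Now let $B$ be the $(3n+1)$-LNakayama algebra with Kupisch series equal to the area sequence of $\tilde D$. By construction this area sequence reproduces the cyclic Kupisch series of $A$ throughout the interior, so for every position~$i$ in the central period the local windows of \Cref{regchara} for $B$ and for $A$ are identical; consequently $S_i$ is $1$- (respectively $2$-) regular over $B$ exactly when it is over $A$. Applying \Cref{thm:LK} to $B$, the module $S_i$ is $1$-regular over $B$ if and only if $\LK(\tilde D)$ has a $1$-cut at~$i$, and $2$-regular if and only if $\LK(\tilde D)$ has a $2$-hill at~$i$. Combining this with the coincidence of $\LK(\tilde D)$ and $\LK(D)$ on the central period, and transporting the cyclic labels $0,\dots,n-1$ of $A$ to the corresponding interior positions of $B$, yields both items.

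The main obstacle is the periodic bookkeeping rather than any new algebra. One must choose the number of periods (three suffices) and the anchor so that no local window around an interior position~$i$ ever straddles the artificial boundary of $\tilde D$, and one must treat the two membership cases uniformly: for $D\in\DyckP_n$ the construction of $\LK(D)=\LKP(D)$ is anchored at a return, while for $D\in\DyckH_n$ the construction of $\LK(D)=\LKH(D)$ is anchored at a rectangle, and \Cref{thm:LK-periodic} guarantees the local match in both. Finally, as in the proof of \Cref{thm:LK}, one checks the mild degenerate cases in which the relevant double rise, double fall, or return abuts a $1$-hill; these are handled exactly as there.
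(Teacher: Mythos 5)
Your proposal reaches the right conclusion but by a genuinely different route from the paper. The paper's own proof is a single sentence: the local geometric argument of \Cref{thm:LK} (matching the green lines emanating from double rises and double falls, and locating their intersections on the diagonals $y=x+1$ and $y=x$) is carried out verbatim for periodic paths, which works because every condition in \Cref{regchara} and every feature of $\LK(D)$ involved is local and the anchoring of the line--matching is exactly what the definitions of $\LKP$ and $\LKH$ provide. You instead reduce to the finite case: unroll $D$, apply \Cref{thm:LK} to the associated LNakayama algebra, and transport the conclusion back using the coincidence of the generalized and classical involutions on the central period established in the proof of \Cref{thm:LK-periodic}. This is a clean formal reduction that avoids re-inspecting the geometry, at the price of boundary bookkeeping, and two of your bookkeeping claims are optimistic. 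First, for $D\in\DyckH_n$ with positive global shift, three consecutive periods of $D$ do not form a Dyck path at all (the path never meets the main diagonal), so ``anchor at a rectangle'' must really mean: complete the unrolled segment to a genuine element of $\Dyck_{3n}$ by modifying the two outer periods; the cleanest way to do this is to unroll $E=\LKH(D)\in\DyckP_n$ instead and use that the classical $\LK$ is an involution, as in the proof of \Cref{thm:LK-periodic}. Second, ``three periods suffice'' is not automatic for the coarea data: $d_j$ is determined by the path over a window of width up to $\max_i d_i$, which for paths in $\DyckH_n$ can exceed $n$ (it is bounded by $2n-1$), so the window around a central-period position can reach past the front of a three-period unrolling; taking five periods, or anchoring the central period further from the artificial boundary, repairs this. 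With those adjustments your argument goes through and is a legitimate alternative to the paper's direct one.
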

\begin{proof}
  The proof of \Cref{thm:LK} applies verbatim.
\end{proof}

\begin{corollary}
  Let~$A$ be an~$n$-CNakayama algebra, and let~$D$ be the corresponding $n$-periodic Dyck path.
  Suppose that $A$ has a $2$-regular simple module.  Then $D\in\DyckP_n\cap\DyckH_n$.
\end{corollary}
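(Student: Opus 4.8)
The plan is to verify the two membership conditions for~$D$ separately, assembling facts already established in the paper. Recall that $\DyckH_n$ is the set of $n$-periodic Dyck paths possessing a rectangle, while $\DyckP_n$ consists of those with global shift~$0$ and non-constant area sequence.

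First I would show $D\in\DyckH_n$. By \Cref{regulardefinition}\eqref{it:regdef1}, a $2$-regular simple module has projective dimension exactly~$2$; hence the hypothesis supplies~$A$ with a simple module of projective dimension~$2$. By \Cref{quasi-heredchara} this means~$A$ is quasi-hereditary, and by the observation following that proposition (obtained via \Cref{smallprojdim}), the CNakayama algebras whose associated periodic Dyck path lies in $\DyckH_n$ are exactly the quasi-hereditary ones. Therefore $D\in\DyckH_n$.

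Next I would establish $D\in\DyckP_n$, which requires both global shift~$0$ and a non-constant area sequence. Let $S_i$ be the $2$-regular simple module. By \Cref{regchara}\eqref{eq:2regchara} we have $c_i=2$, and since~$A$ is a CNakayama algebra, \Cref{eq:relative} forces $c_j\geq 2$ for every~$j$; thus~$2$ is the minimal entry of the Kupisch series. By \Cref{prop:CNak2Dyck}, minimal entry $2=0+2$ corresponds precisely to global shift~$0$. For non-constancy, the same characterization \Cref{regchara}\eqref{eq:2regchara} gives $c_{i+1}-c_{i+2}=1$, so $c_{i+1}\neq c_{i+2}$ and the area sequence is non-constant. (Conceptually, this merely records that~$A$ is not selfinjective, since~$S_i$ is a non-projective module of finite projective dimension, which cannot occur over a selfinjective algebra by the Remark following \Cref{prop:KupischBijection}.) This gives $D\in\DyckP_n$, and combining with the previous paragraph yields $D\in\DyckP_n\cap\DyckH_n$.

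The argument is a short synthesis of earlier results, so I expect no serious obstacle. The only point demanding a little attention is confirming that the value $c_i=2$ is the \emph{minimal} Kupisch entry rather than merely some entry equal to~$2$, as it is this minimality that pins the global shift at~$0$; this follows immediately from the CNakayama lower bound $c_j\geq 2$ guaranteed by \Cref{eq:relative}.
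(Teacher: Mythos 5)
Your proof is correct, but it reaches $D\in\DyckH_n$ by a genuinely different route than the paper. The paper first gets $D\in\DyckP_n$ from $c_i=2$ (via \Cref{regchara}\eqref{eq:2regchara}), then invokes \Cref{thm:LKC} to conclude that $\LK(D)$ has a $2$-hill, hence $\LK(D)\in\DyckP_n$, and finally uses the bijection of \Cref{thm:LK-periodic} to pull this back to $D\in\DyckH_n$; in other words, it routes the second membership through the generalized Lalanne--Kreweras machinery. You instead argue directly: $2$-regularity forces $\pd(S_i)=2$, which by \Cref{smallprojdim}\eqref{it:pd2} (equivalently, via \Cref{quasi-heredchara} and the remark following it) is exactly the existence of a rectangle, i.e.\ $D\in\DyckH_n$. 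Your argument is more elementary and self-contained, needing only the Kupisch-series characterizations; the paper's version has the advantage of exercising the $\LK$--correspondence it has just built and of making transparent why the conclusion of \Cref{thm:LKC} is only ever nonvacuous on $\DyckP_n\cap\DyckH_n$. A small point in your favor: you explicitly verify the non-constancy of the area sequence (from $c_{i+1}-c_{i+2}=1$) and the minimality of the entry $2$ needed for global shift $0$ via \Cref{prop:CNak2Dyck}, both of which the paper leaves implicit in the step ``$c_i=2$ and therefore $D\in\DyckP_n$''.
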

\begin{proof}
  Suppose that $S_i$ is $2$-regular for some $i$.
  Then, by \Cref{regchara}(2), $c_i=2$ and therefore $D\in\DyckP_n$.
  By \Cref{thm:LKC}, $\LK(D)$ has a $2$-hill, so in particular $\LK(D)\in\DyckP_n$, and, by \Cref{thm:LK-periodic}, $D\in\DyckH_n$.
\end{proof}
We note that there are $n$-CNakayama algebras with $1$-regular simple modules such that the corresponding $n$-periodic Dyck path is not even in $\DyckP_n\cup\DyckH_n$.
An example is the $2$-CNakayama algebra with Kupisch series $[4,3]$.

\begin{remark}
  There is an alternative way to extend the map~$\LK = \LKP = \LKH$ on $\DyckP_n\cap\DyckH_n$ to $\DyckP_n\cup\DyckH_n$ as follows.
  For a given $n$-periodic Dyck path~$D$ with area sequence $[a_0,\dots,a_{n-1}]_\circlearrowright$ with global shift~$c$, let $\tilde D$ be the corresponding periodic Dyck path with global shift~$0$ and area sequence $[a_0-c,\dots,a_{n-1}-c]_\circlearrowright$.
  One may now define an involution on periodic Dyck paths with global shift~$c$ for which the associated path with global shift~$0$ lies inside $\DyckP_n\cap\DyckH_n$ by mapping this associated path via the involution $\LK$ and then adding back the global shift.
  Observe that this map preserves the global shift, but does not coincide with $\LK$ outside of $\DyckP_n\cap\DyckH_n$.
  More importantly, one cannot replace $\LK$ with this definition in \Cref{thm:LKC}.

  For example, let $D\in\DyckH_3$ be the $3$-periodic Dyck path with area sequence $[5,4,3]_\circlearrowright$.
  Then $\LK(D)\in\DyckP_3$ has area sequence $[3,2,3]_\circlearrowright$, whereas the construction just outlined yields the area sequence $[3,3,4]_\circlearrowright$.
  The unique $1$-regular module of the CNakayama algebra corresponding to $D$ is $S_0$, and indeed $\LK(D)$ has a unique $1$-cut at position $0$.
  By contrast, the $3$-periodic Dyck path with area sequence $[3,3,4]_\circlearrowright$ has $1$-cuts at positions $0$ and $1$.
\end{remark}

We conclude with some corollaries enumerating CNakayama algebras with certain homological restrictions.  All of these are obtained using the bijection between quasi-hereditary CNakayama algebras and periodic Dyck paths with global shift~$0$.

\begin{corollary}
\label{cor:Cpdim1}
  The number of quasi-hereditary $n$-CNakayama algebras with exactly~$\ell<n$ simple modules of projective dimension~$1$ equals the number of $n$-periodic Dyck paths with global shift~$0$ and exactly~$\ell$ peaks.
  Explicitly, this number is
  \[
    \frac{1}{n}\sum_{k\divides\gcd(\ell,n)}\phi(k)\binom{n/k-1}{\ell/k-1}\binom{n/k}{\ell/k}.
  \]
\end{corollary}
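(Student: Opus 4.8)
The plan is to combine the generalized Lalanne--Kreweras bijection of \Cref{thm:LK-periodic} with the cycle construction recalled above. By \Cref{quasi-heredchara} together with \Cref{smallprojdim}\eqref{it:pd2}, the quasi-hereditary $n$-CNakayama algebras are exactly those whose associated $n$-periodic Dyck path lies in $\DyckH_n$, and the bijection of \Cref{cor:quasihercount} with $c=0$ sends such an algebra, via $\LK=\LKH\colon\DyckH_n\to\DyckP_n$, to an $n$-periodic Dyck path of global shift~$0$ with non-constant area sequence. I would then show that this bijection carries simple modules of projective dimension~$1$ to peaks. By \Cref{smallprojdim}\eqref{it:pd1} the number of simples of projective dimension~$1$ equals the number of indices~$i$ with $c_i-c_{i+1}=1$, that is, the number of double falls of the algebra's own path~$D$ (the side condition $c_{i+1}>1$ is automatic, the minimal Kupisch entry being at least~$2$). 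Running the argument of the proof of \Cref{thm:LK} in the periodic setting, each double fall of~$D$ contributes a vertical line and each double rise a horizontal line to the construction of $\LK(D)$; both counts equal $n$ minus the number of peaks of~$D$, so all lines are matched and the marked intersections are precisely the valleys of $\LK(D)$. Since in a periodic Dyck path the number of peaks equals the number of valleys, the off-by-one present for ordinary Dyck paths disappears, and the number of peaks of $\LK(D)$ equals the number of double falls of~$D$, hence the number of simples of projective dimension~$1$. The restriction $\ell<n$ merely excludes the constant area sequence, which corresponds to the self-injective, non-quasi-hereditary algebra and carries $n$ peaks.

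For the enumeration I would count $n$-periodic Dyck paths of global shift~$0$ by their number of peaks. Such a path decomposes at its returns to the diagonal into a cyclic sequence of prime (ordinary) Dyck paths, and since a return is a valley-type configuration on the diagonal, no peak straddles a return and the number of peaks is additive over the pieces. Applying the cycle construction recalled above with building blocks the prime Dyck paths, the variable~$x$ tracking semilength and~$q$ tracking peaks, and writing $P(x,q)$ for their generating function, the first-passage decomposition of Dyck paths gives $\frac{1}{1-P}=D(x,q)$, where $D(x,q)$ is the generating function for all Dyck paths by semilength and number of peaks. The cycle construction then expresses the desired count as
\[
\frac1n\sum_{k\divides\gcd(\ell,n)}\phi(k)\,[x^{n/k}q^{\ell/k}]\log D(x,q),
\]
after simplifying $\frac{\phi(k)}{k}\cdot\frac{k}{n}=\frac{\phi(k)}{n}$.

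The remaining ingredient is the identity $[x^mq^j]\log D(x,q)=\frac1m\binom{m-1}{j-1}\binom{m}{j}$, which I would derive from the Lagrange inversion theorem above. The functional equation is $D=1+x(q-1)D+xD^2$; substituting $s=D-1$ yields $x=\frac{s}{(1+s)(q+s)}$, so Lagrange inversion with $H(w)=\log(1+w)$ and $F^{(-1)}(s)=\frac{s}{(1+s)(q+s)}$ gives
\[
[x^m]\log D=\frac1m[s^{m-1}]\,(1+s)^{m-1}(q+s)^m.
\]
Extracting the coefficient of~$q^j$ via the binomial theorem produces exactly $\frac1m\binom{m-1}{j-1}\binom{m}{j}$, and substituting this into the formula above yields the claimed expression.

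I expect the main obstacle to be the combinatorial correspondence in the first paragraph: verifying carefully, in the periodic setting, that simples of projective dimension~$1$ match peaks of $\LK(D)$, including the bookkeeping at the returns and the matching of the green lines that eliminates the boundary discrepancy of the ordinary case, and making the cyclic prime decomposition precise enough to invoke the cycle construction. By contrast, the generating-function manipulation and the Lagrange inversion computation are routine.
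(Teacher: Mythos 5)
Your proposal is correct and follows essentially the same route as the paper: the generalized Lalanne--Kreweras bijection identifies quasi-hereditary algebras having $\ell$ simples of projective dimension~$1$ with $n$-periodic paths of global shift~$0$ having $\ell$ peaks (the number of double falls of $D$, i.e.\ of such simples, equals the number of valleys and hence of peaks of $\LK(D)$, the off-by-one of the ordinary case disappearing in the periodic setting), and the count then follows from the cycle construction applied to prime Dyck paths together with Lagrange inversion --- the paper inverts the prime-path generating function $P = x\bigl(q + \tfrac{P}{1-P}\bigr)$ directly, whereas you invert $D-1$ via $x = \tfrac{s}{(1+s)(q+s)}$, which is equivalent. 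One small bookkeeping slip: your displayed intermediate formula $\tfrac{1}{n}\sum_{k}\phi(k)\,[x^{n/k}q^{\ell/k}]\log D(x,q)$ already absorbs the factor $\tfrac{k}{n}$ produced by the Lagrange inversion, so literally substituting $[x^{m}q^{j}]\log D = \tfrac{1}{m}\binom{m-1}{j-1}\binom{m}{j}$ into it would introduce a spurious factor of $\tfrac{k}{n}$; the correct chain is $\sum_{k}\tfrac{\phi(k)}{k}[x^{n/k}q^{\ell/k}]\log D = \sum_{k}\tfrac{\phi(k)}{k}\cdot\tfrac{k}{n}\binom{n/k-1}{\ell/k-1}\binom{n/k}{\ell/k}$, which yields the claimed expression.
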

\begin{proof}
  Let $d_{n,\ell}$ be the number of prime Dyck paths of semilength $n$ with $\ell$ peaks, and let $D(x, q) = \sum_{n,\ell} d_{n,\ell} x^n q^\ell$ be the corresponding generating function.

  Let $\tilde n=n/k$ and $\tilde\ell=\ell/k$.
  According to the cycle construction, we have to compute $[x^{\tilde n} q^{\tilde\ell}]\log\frac{1}{1-D(x, q)}$.
  Note that $D(x,q) = x\left(q + \frac{D(x,q)}{1-D(x,q)}\right)$, because it is either a $1$-hill, or a horizontal step followed by a non-empty sequence of prime Dyck paths and a vertical step.  Therefore, $D^{(-1)}(x,q) = \frac{x}{q-\frac{x}{1-x}}$.

  Using Lagrange inversion and the binomial theorem we obtain
  \begin{align*}
    \frac{1}{k}[x^{\tilde n} q^{\tilde\ell}]\log\frac{1}{1-D(x, q)} %
    &= \frac{1}{n} [x^{\tilde n-1} q^{\tilde\ell}]\frac{1}{1-x}\left(q + \frac{x}{1-x}\right)^{\tilde n}\\ %
    &= \frac{1}{n}\binom{\tilde n}{\tilde\ell} [x^{\tilde n-1}] \frac{x^{\tilde n-\tilde\ell}}{(1-x)^{\tilde n-\tilde\ell+1}}\\%
    &= \frac{1}{n}\binom{\tilde n}{\tilde\ell} [x^{\tilde\ell-1}] \frac{1}{(1-x)^{\tilde n-\tilde\ell+1}}\\%
    &= \frac{1}{n}\binom{\tilde n}{\tilde\ell} \binom{\tilde n - 1}{\tilde\ell -1}.
  \end{align*}
\end{proof}

\begin{corollary}
\label{cor:Cpdim2}
  The number of quasi-hereditary $n$-CNakayama algebras with exactly~$\ell > 0$ simple modules of projective dimension~$2$ equals the number of $n$-periodic Dyck paths with global shift~$0$ and exactly~$\ell$ returns which are not $1$-hills.
  Explicitly, this number is
  \[
    \sum_{k\divides\gcd(\ell,n)}\frac{\phi(k)}{k}\sum_{m=0}^{(n-2\ell)/k}\frac{1}{m+\ell/k}\binom{2(m+\ell/k)}{m}\binom{(n-\ell)/k-m-1}{\ell/k-1}.
  \]
\end{corollary}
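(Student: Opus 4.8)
The plan is to first reduce the statement to a purely combinatorial enumeration and then evaluate it with the cycle construction and Lagrange inversion, mirroring the proof of \Cref{cor:pdim2} in the cyclic setting. For the first sentence, I would combine \Cref{quasi-heredchara} (quasi-hereditary $\Leftrightarrow$ existence of a simple of projective dimension~$2$) with the explicit bijection of \Cref{cor:quasihercount} between quasi-hereditary $n$-CNakayama algebras and those whose Kupisch series is non-constant with minimal entry~$2$, that is, with $n$-periodic Dyck paths in $\DyckP_n$. This bijection is the generalized Lalanne-Kreweras involution $\LK=\LKH$ of \Cref{thm:LK-periodic}. By \Cref{smallprojdim}\eqref{it:pd2} the simples of projective dimension~$2$ are exactly the rectangles of the periodic Dyck path in $\DyckH_n$ attached to the algebra, and the periodic version of \Cref{thm:rectangle-return} furnished by \Cref{thm:LK-periodic} sends the~$\ell$ rectangles of $D\in\DyckH_n$ bijectively to the~$\ell$ non-$1$-hill returns of $\LK(D)\in\DyckP_n$. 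Since $\ell>0$ forces a non-constant area sequence, this identifies the count with the number of $n$-periodic Dyck paths of global shift~$0$ having exactly~$\ell$ returns which are not $1$-hills.

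For the explicit formula, I would decompose such a periodic path along its returns to the diagonal, exactly as in the concatenation argument in the proof of \Cref{prop:quasiherminentrycount}. Each maximal piece between two consecutive returns is a prime (ordinary) Dyck path: either a $1$-hill or a prime path of semilength at least~$2$. Grouping each non-$1$-hill prime piece together with the run of $1$-hills immediately following it produces, for $\ell>0$, a cyclic sequence of exactly~$\ell$ blocks, and this is a bijection between the periodic paths in question (up to rotation) and cycles of such blocks. Writing $D(x)=1+xD(x)^2$ for the generating function of ordinary Dyck paths, a single block has generating function
\[
  B(x,q)=q\,\frac{x\big(D(x)-1\big)}{1-x},
\]
where $x\big(D(x)-1\big)$ enumerates prime Dyck paths of semilength at least~$2$, the factor $1/(1-x)$ accounts for the trailing $1$-hills, and $q$ records the single non-$1$-hill return of the block.

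Applying the cycle construction then gives the count as
\[
  \sum_{k\divides\gcd(\ell,n)}\frac{\phi(k)}{k}\,[x^{n/k}q^{\ell/k}]\log\frac{1}{1-B(x,q)}.
\]
Since $B(x,q)=q\,f(x)$ with $f(x)=x\big(D(x)-1\big)/(1-x)$, one has $\log\frac{1}{1-B}=\sum_{j\geq 1}\frac{q^j}{j}f(x)^j$, so extracting $q^{\ell/k}$ leaves $\frac{1}{\ell/k}f(x)^{\ell/k}$ and reduces the problem to computing $[x^{n/k-\ell/k}]\big(D(x)-1\big)^{\ell/k}(1-x)^{-\ell/k}$. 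Here I would reuse the Lagrange inversion identity from the proof of \Cref{cor:pdim2}, namely $[x^a]\big(D(x)-1\big)^{\ell/k}=\frac{\ell/k}{a}\binom{2a}{a-\ell/k}$, together with $[x^b](1-x)^{-\ell/k}=\binom{b+\ell/k-1}{\ell/k-1}$, and substitute $a=m+\ell/k$. A short simplification then turns the convolution into the claimed double sum, the upper limit $m\leq(n-2\ell)/k$ coming from the support of the second binomial.

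The main obstacle I anticipate is not the coefficient extraction, which is routine once the generating function is in place, but rather the careful justification of the cyclic block decomposition: one must check that grouping trailing $1$-hills with the preceding non-$1$-hill prime piece yields a genuine bijection between rotation classes of periodic Dyck paths and cycles of blocks, so that the cycle construction applies with the correct bivariate weight. This is the point where the $\phi$-terms and the passage to the fundamental period must be handled, and it is the natural cyclic counterpart of the linear decomposition used for \Cref{cor:pdim2}.
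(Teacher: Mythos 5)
Your proposal is correct and follows essentially the same route as the paper: reduce, via the generalized Lalanne--Kreweras involution, to counting $n$-periodic Dyck paths of global shift~$0$ with exactly $\ell$ returns that are not $1$-hills, then apply the cycle construction and the Lagrange-inversion coefficient of $\big(D(x)-1\big)^{\ell/k}$. The only cosmetic difference is that you absorb the runs of trailing $1$-hills into blocks with generating function $qx\big(D(x)-1\big)/(1-x)$ before invoking the cycle construction, whereas the paper applies it to all prime Dyck paths with $R(x,q)=x+qx\big(D(x)-1\big)$ and then factors $\log\frac{1}{1-R(x,q)}$ as $\log\frac{1}{1-x}$ plus your $\log\frac{1}{1-B(x,q)}$; since the first summand carries no $q$, the two computations coincide for $\ell>0$, and your direct convolution replaces the paper's final binomial-difference identity.
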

\begin{proof}
  Let $D(x) = 1 + x D(x)^2$ be the generating function for all Dyck paths.
  Furthermore, let $d_{n,\ell}$ be the number of prime Dyck paths of semilength $n$ with $\ell$ returns which are not $1$-hills, and let $R(x, q) = \sum_{n,\ell} d_{n,\ell} x^n q^\ell$ be the corresponding generating function.

  Let $\tilde n=n/k$ and $\tilde\ell=\ell/k$.
  According to the cycle construction, we have to compute $[x^{\tilde n} q^{\tilde\ell}]\log\frac{1}{1-R(x, q)}$.
  Note that $R(x, q) = q x D(x) - q x + x$.

  Thus, using the expansion of the logarithm, we have
  \begin{align*}
    [x^{\tilde n} q^{\tilde\ell}]\log\frac{1}{1-R(x, q)} %
    &= [x^{\tilde n} q^{\tilde\ell}] \log\left(\frac{\frac{1}{1-x}}{1-qx\frac{D(x)-1}{1-x}}\right)\\%
    &= \frac{1}{\tilde\ell}[x^{\tilde n}]\left(x\frac{D(x)-1}{1-x}\right)^{\tilde\ell}.
  \end{align*}
  Recall from the proof of \Cref{cor:pdim2} that
  \[
  [x^{\tilde n}] \frac{x^\ell \big(D(x)-1\big)^\ell}{(1-x)^{\ell+1}} = %
  \sum_{m=0}^{n-2\tilde\ell}\frac{\tilde\ell}{m+\tilde\ell}\binom{2(m+\tilde\ell)}{m}\binom{\tilde n-m-\tilde\ell}{\tilde\ell}.
  \]
  Now, using $\binom{\tilde n-m-\tilde\ell}{\tilde\ell} - \binom{\tilde n-1-m-\tilde\ell}{\tilde\ell} = %
  \binom{\tilde n-1-m-\tilde\ell}{\tilde\ell-1}$,
  the result follows.
\end{proof}

\begin{corollary}
\label{cor:Cno1reg}
  The number of quasi-hereditary $n$-CNakayama algebras without $1$-re\-gu\-lar simple modules equals the number of $n$-periodic Dyck paths with global shift~$0$ without $1$-rises.
  Explicitly, this number is
  \[
    \frac{1}{n}\sum_{k\divides n}\phi(k)\sum_{m=1}^{n/k-1}\binom{n/k}{m}\binom{n/k-m-1}{m-1}.
  \]
\end{corollary}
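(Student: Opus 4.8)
The plan is to follow the template of \Cref{cor:Cpdim1} and \Cref{cor:Cpdim2}: first reduce the algebraic count to a count of periodic Dyck paths, and then evaluate the latter by the cycle construction. For the reduction I would invoke the periodic analogue of \Cref{thm:1-rises-2-hills}\eqref{it:1reg1rise}, namely that under the bijection between quasi-hereditary $n$-CNakayama algebras and $n$-periodic Dyck paths of global shift~$0$ (obtained by composing $\LKH$ with the periodic analogue of the map $\psi$ from the proof of \Cref{thm:1-rises-2-hills}, starting from the identification of $1$-regular simples with $1$-cuts of $\LK(D)$ in \Cref{thm:LKC}) the $1$-regular simple modules correspond precisely to the $1$-rises. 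Hence quasi-hereditary $n$-CNakayama algebras without $1$-regular simples correspond to $n$-periodic Dyck paths of global shift~$0$ without $1$-rises; note that the constant area sequence is automatically excluded on both sides, since it consists entirely of $1$-rises and, being self-injective, is not quasi-hereditary. It then remains to count these paths.

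For the enumeration I would use the cycle construction, exactly as in \Cref{cor:Cpdim1} and \Cref{cor:Cpdim2}. As in the proof of \Cref{prop:quasiherminentrycount}, every area sequence of global shift~$0$ decomposes cyclically into \emph{primitive} blocks, those with a single entry equal to the minimum~$2$ and all other entries at least~$3$. The decisive local observation is that, since a $1$-rise is an occurrence of two cyclically consecutive equal entries, a cyclic concatenation of primitive blocks avoids $1$-rises if and only if no block is the length-one block $[2]$ (which would create two adjacent entries equal to~$2$ at a junction) and no block has two equal consecutive entries internally; junctions between longer blocks are harmless, since such blocks begin with an entry at least~$3$ and end with~$2$. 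I would then show that the primitive blocks of length~$N$ without $1$-rises are enumerated by the Motzkin number $M_{N-2}$ (subtracting~$1$ from each entry turns such a block into a prime Dyck path whose absence of $1$-rises is a Motzkin-counted condition, and the small values $1,1,2,4,9,\dots$ are readily checked). Consequently their generating function $D(x)=\sum_{N\geq 2}M_{N-2}x^{N}=x^{2}M(x)$, with $M$ the Motzkin generating function, satisfies $D=x^{2}+xD+D^{2}$.

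With this in hand, the cycle construction gives the count as $\sum_{k\divides n}\tfrac{\phi(k)}{k}[x^{n/k}]\log\tfrac{1}{1-D(x)}$ (taking the parameter $q=1$), so the only remaining task is to extract $[x^{N}]\log\tfrac{1}{1-D(x)}$. Here I would substitute $u=xM$, which satisfies $u=x(1+u+u^{2})$, and use $1-D=\tfrac{1+u}{1+u+u^{2}}$ to write $\log\tfrac{1}{1-D}=\log(1+u+u^{2})-\log(1+u)$. Applying Lagrange inversion with $\phi(u)=1+u+u^{2}$ then yields
\[
  [x^{N}]\log\tfrac{1}{1-D(x)}=\frac{1}{N}[x^{N-2}]\frac{(2+x)(1+x+x^{2})^{N-1}}{1+x},
\]
and expanding $1+x+x^{2}=(1+x)+x^{2}$ together with $2+x=(1+x)+1$ turns the right-hand side into $\tfrac{1}{N}\sum_{m}\binom{N}{m}\binom{N-m-1}{m-1}$ via a routine binomial identity. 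Substituting $N=n/k$ and using $\tfrac{\phi(k)}{k}\cdot\tfrac{1}{n/k}=\tfrac{\phi(k)}{n}$ produces the claimed closed form.

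The main obstacle I anticipate is the local analysis of the cyclic decomposition together with the identification of the primitive blocks without $1$-rises as a Motzkin-counted family: one must verify carefully that the only junction obstruction is the block $[2]$ and that the Dyck-path condition~\eqref{eq:relative} on the shifted blocks matches exactly the Motzkin recursion encoded by $D=x^{2}+xD+D^{2}$. Once the generating function $D$ is pinned down, the remaining Lagrange inversion and the final binomial simplification are routine.
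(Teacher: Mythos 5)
Your proposal follows the paper's proof essentially step for step: the same reduction to $n$-periodic Dyck paths of global shift~$0$ without $1$-rises, the same cyclic decomposition into prime factors whose generating function satisfies $R=x^2+xR+R^2$ (Motzkin), and the same cycle-construction-plus-Lagrange-inversion evaluation. The only, immaterial, difference is that you invert once applied to $\log\frac{1+u+u^2}{1+u}$ after substituting $u=xM$, whereas the paper expands the logarithm, inverts term by term, and finishes with the hockey stick identity.
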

\begin{proof}
  Let $R(x) = x^2 + x R(x) + R(x)^2$ be the generating function for prime Dyck paths without $1$-rises and let $S(x)=R(x)/x$.
  Let $\tilde n=n/k$ and $\tilde\ell=\ell/k$.
  According to the cycle construction, we have to compute $[x^{\tilde n}]\log\frac{1}{1- x S(x)}$.
  Using the expansion of the logarithm, and Lagrange inversion with $S^{(-1)}(x) = \frac{x}{1+x+x^2}$, we obtain
  \begin{align*}
    [x^{\tilde n}]\log\frac{1}{1- x S(x)} %
    &= \sum_{m=1}^{n-1} [x^{\tilde n-m}] \frac{S(x)^m}{m} \\%
    &= \sum_{m=1}^{n-1} \frac{1}{\tilde n-m}[x^{\tilde n-2m}] (1+x+x^2)^{\tilde n - m} \\%
    \left(\substack{\text{substitute $n-m$ for $m$}\\\text{and $1/x$ for $x$}}\right) %
    &= [x^{\tilde n}] \sum_{m=1}^{n-1}\frac{1}{m} (1+x+x^2)^{m} \\%
    \left(\substack{\text{expand $\big(1+x(1+x)\big)$}}\right)
    &= \sum_{m=1}^{\tilde n-1}\frac{1}{m} \sum_{j=1}^{\tilde n} \binom{m}{j}\binom{j}{n-j}\\%
    &= \sum_{j=1}^{\tilde n}\binom{j}{\tilde n-j}\frac{1}{j}\sum_{m=1}^{\tilde n-1}\binom{m-1}{j-1}\\%
    \left(\substack{\text{\lq hockey stick identity\rq}}\right)
    &= \sum_{j=1}^{\tilde n}\binom{j}{\tilde n-j}\frac{1}{j}\binom{\tilde n-1}{j}.%
  \end{align*}
  A rearrangement of the final expression yields the claim.
\end{proof}

\begin{corollary}
\label{cor:Cno2reg}
  The number of quasi-hereditary $n$-CNakayama algebras without $2$-re\-gu\-lar simple modules equals the number of $n$-periodic Dyck paths with global shift~$0$ without $2$-hills, other than the path with constant area sequence.
  Explicitly, this number is
  \[
    \sum_{k\divides n}\frac{\phi(k)}{k}\sum_{m=0}^{\lfloor\frac{n}{2k}\rfloor}\frac{(-1)^m}{n/k-m}\binom{2n/k-3m-1}{n/k-m-1}-1.
  \]
\end{corollary}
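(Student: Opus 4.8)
The plan is to follow the pattern of \Cref{cor:Cpdim1} and \Cref{cor:Cno1reg}: first reduce the enumeration of quasi-hereditary $n$-CNakayama algebras without $2$-regular simple modules to a purely combinatorial count via the generalized Lalanne-Kreweras bijection, and then evaluate that count with the cycle construction and Lagrange inversion. For the reduction I would recall from \Cref{quasi-heredchara} and \Cref{smallprojdim} that a quasi-hereditary $n$-CNakayama algebra corresponds to a periodic Dyck path $D\in\DyckH_n$, and that by \Cref{thm:LK-periodic} the map $\LKH$ is a bijection from $\DyckH_n$ onto $\DyckP_n$. By \Cref{thm:LKC}, the $2$-regular simple modules of the algebra attached to $D$ are precisely the $2$-hills of $\LK(D)=\LKH(D)$. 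Hence, as $D$ ranges over $\DyckH_n$, the image $E=\LKH(D)$ ranges bijectively over $\DyckP_n$, and the algebra has no $2$-regular simple module exactly when $E$ has no $2$-hill. Since $\DyckP_n$ consists of the $n$-periodic Dyck paths of global shift~$0$ with non-constant area sequence, this establishes the first sentence of the statement, the count being the number of global-shift-$0$ paths without $2$-hills other than the one with constant area sequence.

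For the enumeration I would use the first-passage (cyclic) decomposition: an $n$-periodic Dyck path of global shift~$0$ is a necklace of prime Dyck paths, and such a path has a $2$-hill if and only if one of its cyclic prime pieces is the $2$-hill $hhvv$. Thus the relevant atoms are the prime Dyck paths other than $hhvv$, with generating function $P(x)=xD(x)-x^2$, where $D=1+xD^2$ is the Catalan generating function. Applying the cycle construction gives the number of such paths of total semilength $n$ as $\sum_{k\divides n}\frac{\phi(k)}{k}[x^{n/k}]\log\frac{1}{1-P(x)}$. It then remains to show, for $N=n/k$, that
\[
  [x^N]\log\frac{1}{1-P(x)}=\sum_{m=0}^{\lfloor N/2\rfloor}\frac{(-1)^m}{N-m}\binom{2N-3m-1}{N-m-1},
\]
and to subtract $1$ for the constant path (the all-$1$-hills necklace, corresponding to the excluded self-injective algebra), which yields the displayed formula.

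To evaluate the inner coefficient I would expand $\log\frac{1}{1-P}=\sum_{j\ge1}P^j/j$ and write $P=u-x^2$ with $u=xD$. Since $u=x+u^2$, its compositional inverse is $u^{(-1)}(y)=y(1-y)$, so Lagrange inversion gives $[x^M]u^r=\frac{r}{M}\binom{2M-r-1}{M-r}$. Expanding $P^j=\sum_i\binom{j}{i}(-1)^i x^{2i}u^{j-i}$ and collecting by $i$ reduces the computation to the Vandermonde convolution $\sum_{r}\binom{r+i-1}{i}\binom{2M-r-1}{M-r}=\binom{2M+i-1}{M-1}$ with $M=N-2i$. This produces $\sum_i(-1)^i\frac{1}{N-2i}\binom{2N-3i-1}{N-2i-1}$, which matches the target after the elementary factorial identity
\[
  \frac{1}{N-2i}\binom{2N-3i-1}{N-2i-1}=\frac{1}{N-i}\binom{2N-3i-1}{N-i-1}=\frac{(2N-3i-1)!}{(N-2i)!\,(N-i)!}.
\]
The main obstacle is this last computation: carrying out the binomial convolution cleanly and, in particular, handling the boundary contribution of the $u^{0}=1$ term, which arises when $N$ is even and $j=N/2$ and which one checks equals exactly the $m=N/2$ summand $\frac{(-1)^{N/2}}{N/2}$ of the target sum. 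By contrast, the bijective reduction in the first paragraph is essentially bookkeeping on top of the already-established \Cref{thm:LKC} and \Cref{thm:LK-periodic}.
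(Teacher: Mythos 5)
Your reduction to counting $n$-periodic Dyck paths of global shift~$0$ without $2$-hills (excluding the constant-area-sequence path) is exactly the one the paper intends --- it is stated once, just before \Cref{cor:Cpdim1}, as applying to the whole block of corollaries --- and your identification of the atoms of the cyclic first-passage decomposition as the prime Dyck paths other than $hhvv$, with generating function $H(x)=xD(x)-x^2$, coincides with the paper's setup for the cycle construction. Where you genuinely diverge is in extracting $[x^{N}]\log\frac{1}{1-H(x)}$. The paper uses the algebraic identity $1-H(x)=\frac{1+x^2D(x)}{D(x)}$ (a consequence of $xD^2-D+1=0$) to split the logarithm into $\log D(x)-\log\bigl(1+x^2D(x)\bigr)$, each piece yielding to a single clean Lagrange inversion; the $m=0$ summand of the displayed formula is then recognized as $\frac{1}{N}\binom{2N-1}{N-1}=[x^N]\log D(x)$. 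You instead expand $\log\frac{1}{1-H}$ directly, write $H=u-x^2$ with $u=xD$ and $u^{(-1)}(y)=y(1-y)$, and reduce to the convolution $\sum_t\binom{i+t}{t}\binom{2M-t-2}{M-t-1}=\binom{2M+i-1}{M-1}$ with $M=N-2i$; this does produce $\frac{(-1)^i}{N-2i}\binom{2N-3i-1}{N-2i-1}$, which equals the stated summand by the factorial identity you record, and your handling of the degenerate $u^{0}$ contribution at $i=N/2$ is also correct. Both routes are valid: the paper's factorization is slicker and avoids the double sum entirely, while yours is more mechanical and requires no foreknowledge of the identity $\frac{1}{1-H}=\frac{D}{1+x^2D}$, at the cost of a binomial convolution and some care with the boundary term.
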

\begin{proof}
  Let $D(x)=1+x D(x)^2$ be the generating function for all Dyck paths and let $H(x) = x D(x) - x^2$ be the generating function for prime Dyck paths without the $2$-hill.
  Let $\tilde n=n/k$ and $\tilde\ell=\ell/k$.
  According to the cycle construction, we have to compute $[x^{\tilde n}]\log\frac{1}{1- H(x)}$.
  Note that $\frac{1}{1- H(x)} = D(x)\frac{1}{1+x^2D(x)}$, we will compute the coefficient in the logarithm of these two factors separately.
  For the first factor, using that the compositional inverse of $D(x)-1$ is $\frac{x}{(1+x)^2}$, we obtain
  \begin{equation*}\label{eq:one}
    [x^{\tilde n}]\log\left(1+(D(x)-1)\right) = \frac{1}{\tilde n}[x^{\tilde n-1}](1+x)^{2\tilde n-1} = \frac{1}{\tilde n}\binom{2\tilde n-1}{\tilde n-1}.
  \end{equation*}
  For the second factor, we expand the logarithm and use that the compositional inverse of $x D(x)$ is $x(1-x)$:
  \begin{align*}
    [x^{\tilde n}]\log\left(\frac{1}{1+x^2 D(x)}\right)%
    &= \sum_{m\geq 1}\frac{(-1)^m}{m} [x^{\tilde n-m}] \big(xD(x)\big)^m \\%
    &= \sum_{m=1}^{\tilde n-1}\frac{(-1)^m}{\tilde n-m} [x^{\tilde n-2m}](1-x)^{m-\tilde n} \\%
    &= \sum_{m=1}^{\tilde n-1}\frac{(-1)^m}{\tilde n-m} \binom{2\tilde n-3m-1}{\tilde n-m-1}.
  \end{align*}
  We now note that we can extend the sum to $m=0$, and the additional term is precisely the expression computed in \Cref{eq:one}.
\end{proof}
Using \Cref{thm:enomoto}, \Cref{thm:LKC} also gives a sharp upper bound for the number of exact structures on the category of finitely generated projective modules for $n$-CNakayama algebras.

\begin{corollary}
\label{bound2regcnak}
  An $n$-CNakayama algebra has at most $\lfloor\frac{n}{2}\rfloor$ $2$-regular simple modules and thus at most $2^{\lfloor\frac{n}{2}\rfloor}$ exact structures on the category of finitely generated projective modules.
  This bound is sharp.
\end{corollary}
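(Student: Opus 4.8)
The plan is to derive the bound on the number of $2$-regular simple modules directly from \Cref{regchara}(2), and then read off the number of exact structures via \Cref{thm:enomoto}.

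For the upper bound, suppose $S_i$ is $2$-regular. Then \Cref{regchara}(2) gives $c_{i+1}-c_{i+2}=1$, and since every entry of the Kupisch series of a CNakayama algebra is at least $2$ (Conditions~\eqref{eq:relative} and~\eqref{eq:Cboundary}), this forces $c_{i+1}=c_{i+2}+1\geq 3$. If the neighbouring module $S_{i+1}$ were also $2$-regular, then \Cref{regchara}(2) would give $c_{i+1}=2$, a contradiction. Hence no two cyclically adjacent simple modules are simultaneously $2$-regular, so the set of indices of $2$-regular simple modules is an independent set in the cycle on $\ZZ/n\ZZ$, and therefore has at most $\lfloor n/2\rfloor$ elements. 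By \Cref{thm:enomoto} an algebra with $m$ $2$-regular simple modules has exactly $2^m$ exact structures, which yields the bound $2^{\lfloor n/2\rfloor}$. One may argue equally well through \Cref{thm:LKC}: whenever $\LK(D)$ is defined the $2$-regular simples of $A$ correspond to the $2$-hills of $\LK(D)$, and each $2$-hill uses two of the $n$ horizontal and two of the $n$ vertical steps of a single period, giving at most $\lfloor n/2\rfloor$ of them.

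For sharpness I would exhibit an explicit $n$-CNakayama algebra attaining the bound and verify the $2$-regularity of the relevant simples with \Cref{regchara}(2), computing coKupisch series via \Cref{eq:KupischCoKupisch}. When $n$ is even I would take the algebra with Kupisch series $[2,3,2,3,\dots,2,3]_\circlearrowright$; its coKupisch series turns out to be the same necklace, and each of the $n/2$ simple modules $S_i$ with $i$ even satisfies $c_i=d_{i+2}=2$ and $c_{i+1}-c_{i+2}=d_{i+1}-d_i=1$, hence is $2$-regular. When $n$ is odd no perfectly alternating necklace exists, and the naive choice $[2,3,\dots,2,3,2]_\circlearrowright$ fails the coKupisch part of \Cref{regchara}(2) near the cyclic wrap-around; instead I would use the Kupisch series $[4,3,2,3,2,\dots,3,2]_\circlearrowright$ consisting of one staircase $4,3,2$ followed by $(n-3)/2$ copies of $3,2$. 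Computing its coKupisch series shows that the $(n-1)/2$ simple modules sitting at the entries equal to $2$ are all $2$-regular, so the bound $\lfloor n/2\rfloor=(n-1)/2$ is attained.

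The upper bound is immediate; the only genuinely delicate point is sharpness for odd $n$. The difficulty is that the conditions of \Cref{regchara}(2) mix the Kupisch and the coKupisch series, and the coKupisch values near the cyclic wrap-around depend on the global shape of the necklace. The role of the staircase $4,3,2$ is exactly to raise one coKupisch value (it produces a coKupisch entry equal to $4$, whereas the naive alternating necklace only reaches $3$), which is what makes the condition $d_{i+1}=d_i+1$ hold across the wrap for the affected indices. I would therefore first determine the coKupisch series of the staircase family in closed form, and only then check the four equalities of \Cref{regchara}(2) at each index with $c_i=2$.
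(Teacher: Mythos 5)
Your proof is correct, but it differs from the paper's in two respects. For the upper bound, the paper passes through \Cref{thm:LKC} and observes that an $n$-periodic Dyck path has at most $\lfloor n/2\rfloor$ $2$-hills; you instead argue directly from \Cref{regchara}\eqref{eq:2regchara} that $2$-regularity of $S_i$ forces $c_{i+1}=c_{i+2}+1\geq 3$ while $2$-regularity of $S_{i+1}$ forces $c_{i+1}=2$, so the $2$-regular indices form an independent set in the $n$-cycle. Your route is more elementary and avoids the hypothesis $D\in\DyckP_n\cup\DyckH_n$ needed to invoke \Cref{thm:LKC} (which the paper only justifies via the corollary following that theorem). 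For sharpness, the even case agrees with the paper ($[2,3,\dots,2,3]_\circlearrowright$), but for odd $n$ the paper offers $[2,3,\dots,2,3,2,2]_\circlearrowright$ — a necklace of even length as written, and whose natural odd-length readings (e.g.\ $[2,3,2,3,2]_\circlearrowright$ or $[2,3,2,3,2,2,2]_\circlearrowright$) have coKupisch series violating $d_{i+1}=d_i+1$ at all but one candidate index, so they attain only one $2$-regular simple rather than $\lfloor n/2\rfloor$. Your replacement $[4,3,2,3,2,\dots,3,2]_\circlearrowright$ does attain the bound: one checks that its coKupisch series is $[3,2,3,4,2,3,2,3,\dots]$ (and $[4,2,3]$ for $n=3$), so every index $i$ with $c_i=2$ satisfies all four conditions of \Cref{regchara}\eqref{eq:2regchara}; this is confirmed by \Cref{fig:Nakayama}, where $[4,3,2]$ has the single $2$-regular simple $S_2$ and the rotation $[3,2,4,3,2]$ has exactly the two $2$-regular simples $S_1,S_4$. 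In short, your argument is a correct and, on the odd case, more careful alternative to the paper's proof.
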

\begin{proof}
  A periodic Dyck path of semilength $n$ has at most $\lfloor\frac{n}{2}\rfloor$ $2$-hills.  Following \Cref{thm:LKC}, the bound is thus obtained for the $n$-CNakayama algebras with Kupisch series $[2,3,\dots,2,3]$ if $n$ is even, and, for example, $[2,3,\dots,2,3,2,2]$ if $n$ is odd.
\end{proof}

\section{Nakayama algebras of global dimension one and two}
\label{sec:globaldim}

The \Dfn{global dimension} $\gd(A)$ of an algebra~$A$ is the maximal projective dimension of a simple module, see for example \cite[Proposition~I.5.1]{ARS}.
In this section we consider Nakayama algebras of global dimension at most two.
\begin{definition}
  The \Dfn{height} of a (possibly periodic) Dyck path is the maximal entry in its area sequence minus one.
\end{definition}
\begin{theorem}
  \label{gldim2chara}
  An~$n$-Nakayama algebra~$A$ has global dimension $1$ if and only if it has Kupisch series $[n,\dots,1]$ corresponding to the unique Dyck path without valleys.  Any other~$n$-LNakayama algebra has global dimension $2$ if and only if for all $i$ such that $S_i$ is non-projective, we have
  \[
    c_{i+1}+1 \in \{c_i, c_{i+c_i} + c_i\},
  \]
  i.e., if and only if all valleys of the corresponding (possibly periodic) Dyck path are rectangles.

  If~$A$ is an~$n$-LNakayama algebra or the (possibly periodic) Dyck path $D$ corresponding to~$A$ is in $\DyckP_n\cup\DyckH_n$, the Nakayama algebra has global dimension~$2$ if and only if $\LK(D)$ has height~$2$.

  Moreover, $(n+1)$-LNakayama algebras of global dimension $2$ with exactly~$\ell$ simple modules of projective dimension $2$ are in bijection with subsets of $\{1,\dots,n\}$ of cardinality $2\ell$, counted by $\binom{n}{2\ell}$.

 $n$-CNakayama algebras of global dimension~$2$ with exactly~$\ell$ simple modules of projective dimension $2$ are in bijection with subsets of $\{0,\dots,n-1\}$ of cardinality $2\ell$ up to rotation by pairs\footnote{\OEIS{A052823}}.  Explicitly, this number is
  \[
    \frac{2}{n}\sum_{k\divides\gcd(\ell,n)} \phi(k)\binom{n/k}{2\ell/k}.
  \]
\end{theorem}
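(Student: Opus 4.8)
The plan is to handle the five assertions in turn, always starting from the projective-dimension formulas of \Cref{smallprojdim} and only invoking the Lalanne-Kreweras machinery for the reformulation through height and for the enumeration. For the global dimension one statement, I would use that $\gd(A)$ is the largest projective dimension of a simple module, so $\gd(A)=1$ forces $\pd(S_i)=1$ for every non-projective $S_i$; by \Cref{smallprojdim}\eqref{it:pd1} this means $c_i=c_{i+1}+1$ throughout, which together with the boundary condition~\eqref{eq:Lboundary} telescopes to $c_i=n-i$, i.e. the Kupisch series $[n,\dots,1]$, whose area sequence is the unique Dyck path without valleys. A CNakayama algebra satisfies~\eqref{eq:Cboundary}, hence $c_{n-1}\ge 2$ and can never have this Kupisch series; moreover it is never hereditary, since its quiver is a genuine cycle, so $\gd\neq 1$ there. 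For the LNakayama global dimension two statement I would observe that $\gd(A)\le 2$ is equivalent to $\pd(S_i)\le 2$ for all non-projective $S_i$, which by \Cref{smallprojdim} reads $c_{i+1}+1\in\{c_i,\,c_{i+c_i}+c_i\}$. Translating through \Cref{def:Dyck-statistics}: a non-projective $S_i$ with $\pd(S_i)\ge 2$ has $c_{i+1}\ge c_i$, which is exactly a valley at $(i+1,\,i+c_i-1)$, and this valley is a rectangle precisely when $c_{i+1}+1=c_i+c_{i+c_i}$, i.e. when $\pd(S_i)=2$. Hence all valleys are rectangles iff $\gd(A)=2$, the valley-free staircase being the already settled $\gd=1$ case.

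For the reformulation through $\LK(D)$ I would combine this dictionary with \Cref{thm:rectangle-return}, by which rectangles of $D$ correspond to returns of $\LK(D)$ that are not the final step of a $1$-hill. The geometric point is that a Dyck path has height at most $2$ iff each peak $(i,j)$ satisfies $j-i\le 2$ (since a peak has $c_i=j-i+1$ and the global maximum of the area sequence is attained at a peak); locating the peaks of $\LK(D)$ at the crosses of $\BJS(D)$ above the diagonal via \Cref{prop:LK-BJS}, one checks that all these crosses stay within distance $2$ of the diagonal exactly when every valley of $D$ is a rectangle. Equivalently, the height-at-most-$2$ paths are precisely the concatenations of $1$-hills with the unique prime component $U(UD)^{m-1}D$ of each semilength $m\ge 2$ reaching height $2$, each such non-trivial component being a non-$1$-hill return. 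For CNakayama algebras I would run the same argument on the periodic path using \Cref{thm:LK-periodic}; the subtle point, which I expect to resolve with the primitive-factor decomposition of \Cref{prop:quasiherminentrycount}, is that $\LKP(D)$ may have no return, so the matching of the green lines must be anchored at a rectangle rather than at a return.

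For the LNakayama enumeration I would exploit the component description directly. A height-at-most-$2$ Dyck path of semilength $n$ is a composition of $n$ into parts equal to $1$ (a $1$-hill) or $\ge 2$ (a prime height-$2$ component), with $\ell$ the number of parts that are at least $2$; marking each big part of size $m$ by its first two units and leaving everything else unmarked yields the generating function $\sum_{\ell}t^\ell x^{2\ell}(1-x)^{-(2\ell+1)}$, whose coefficient of $x^n t^\ell$ is $\binom{n}{2\ell}$, producing the bijection with subsets of $\{1,\dots,n\}$ of cardinality $2\ell$ (the $2\ell$ marked units).

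For the CNakayama enumeration I would first note, via \Cref{quasi-heredchara}, that global dimension two with $\ell\ge 1$ simples of projective dimension two forces quasi-heredity, so that \Cref{prop:CNak2Dyck} and \Cref{cor:quasihercount} let me pass to $n$-periodic Dyck paths of global shift $0$. Applying the cycle construction to the single-component generating function $D(x,q)=x+qx^2/(1-x)$ and extracting $[x^{n/k}q^{\ell/k}]\log\frac{1}{1-D(x,q)}$ by Lagrange inversion turns the linear count into the necklace-type sum $\tfrac{2}{n}\sum_{k\mid\gcd(\ell,n)}\phi(k)\binom{n/k}{2\ell/k}$. I expect the genuinely delicate point, and hence the main obstacle, to be exactly this cyclic enumeration: pinning down the correct symmetry, namely ``rotation by pairs'' on the marked subsets, and explaining the prefactor $2$ (which comes from the two marked units of each big part interacting with the cyclic symmetry, and shifts the index set from $\gcd(2\ell,n)$ to $\gcd(\ell,n)$) requires careful Burnside bookkeeping, since a naive transfer of the linear argument would give the wrong constant.
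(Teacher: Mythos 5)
Your treatment of the first three assertions matches the paper's: the global-dimension characterizations are read off from \Cref{smallprojdim} exactly as in the printed proof (the rectangle reformulation being immediate from \Cref{def:Dyck-statistics}), and your height-$2$ reformulation via \Cref{prop:LK-BJS} --- the crosses of $\BJS(D)$ being confined to the diagonal and the superdiagonal precisely when every valley is a rectangle --- is the same mechanism the paper uses. For the enumeration you take a genuinely different route: the paper builds the $2\ell$-subset directly from $D$ by recording the coordinates $i_1+1<j_1+1<\dots<i_\ell+1<j_\ell+1$ of its rectangles, whereas you pass to $\LK(D)$, decompose the height-at-most-$2$ path into $1$-hills and the unique prime component of each semilength $m\ge 2$, and count compositions. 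Your generating function $\sum_\ell t^\ell x^{2\ell}(1-x)^{-(2\ell+1)}$ is correct and yields $\binom{n}{2\ell}$, and the cycle construction applied to $x+qx^2/(1-x)$ does produce $\frac{2}{n}\sum_{k\mid\gcd(\ell,n)}\phi(k)\binom{n/k}{2\ell/k}$, since factoring $(1-x)^2-qx^2$ gives $[x^{\tilde n}q^{\tilde\ell}]\log\frac{1}{1-x-qx^2/(1-x)}=\frac{2}{\tilde n}\binom{\tilde n}{2\tilde\ell}$ for $\tilde\ell\ge 1$. So the purely enumerative claims are established, and your cyclic computation is actually more explicit than the paper's, which only sketches that case.

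However, your explicit bijection is broken. Marking the \emph{first two} units of each big part is neither injective nor surjective onto $2\ell$-subsets: for $n=3$, $\ell=1$, the compositions $(3)$ and $(2,1)$ both receive the marked set $\{1,2\}$, and the image consists only of unions of $\ell$ pairs of consecutive integers. The factor $x^2/(1-x)$ for a big part must instead be parsed as (first unit)(middle run)(last unit); marking the first and the \emph{last} unit of each big part lets one recover the composition from the marked positions and hits every $2\ell$-subset. This corrected marking is essentially the paper's map recording, for each rectangle, both its $x$-coordinate and its $y$-coordinate plus one. The same repair is needed before the ``rotation by pairs'' identification in the cyclic case can even be formulated; you correctly flag that step as delicate but do not carry it out, so the bijective (as opposed to enumerative) content of the last two assertions remains open in your write-up.
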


\begin{proof}
  The global dimension of a Nakayama algebra equals the maximal projective dimension of a simple module~$S_i$.  Thus, the characterization in terms of the Kupisch series is an immediate consequence of \Cref{smallprojdim}.  The reformulation in terms of rectangles is immediate from the definition.

  It remains to describe the claimed bijections.
  Let~$A$ be an $(n+1)$-LNakayama algebra whose simple modules of projective dimension $2$ are $S_{i_1},\dots,S_{i_\ell}$, corresponding to rectangles of $D$ with $x$-coordinates $1 < i_1+1 < \dots < i_\ell+1 \leq n$.
  Mirroring $D$ below the main diagonal, $\BJS$ puts crosses into the cells of the corresponding valleys, with top left coordinates $(j_1,i_1+1),\dots,(j_\ell,i_\ell+1)$.
Then, working from right to left, $\BJS$ puts crosses into the cells on the main diagonal, with top-left coordinates $(0,1),\dots,(i_1-1,i_1)$.

  Because there is a rectangle with $x$-coordinate $i_1+1$, the next valley at $(j_2, i_2+1)$ has $y$-coordinate strictly larger than $j_1+1$.  Thus, there are no crosses corresponding to valleys of $D$ with $y$-coordinates $i_1+2,\dots,j_1+1$.  Therefore, $\BJS$ puts crosses into the cells on the super diagonal with top-left coordinates $(i_1,i_1+2),\dots,(j_1-1,j_1+1)$.

  The process then continues by putting crosses into the cells on the main diagonal again, with top-left coordinates $(j_1+1,j_1+2),\dots,(i_2-1,i_2)$, and so on.  It is not hard to see that any Dyck path of height~$2$ can be obtained this way.

  Similarly, one finds that mapping $D$ to the set
  \[
    i_1+1 < j_1+1 < i_2+1 < \dots < i_\ell+1 < j_\ell+1
  \]
  is a bijection with subsets of $\{1,\dots,n\}$ of size $2\ell$.
\end{proof}

\begin{example}
\label{ex:globaldim2}
  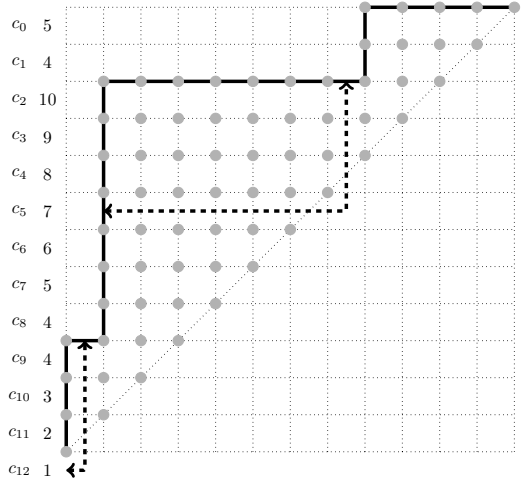
\begin{figure}
    \centering
    \resizebox{.55\textwidth}{!}{
      \begin{tikzpicture}[scale=0.75]
        \draw[dotted] (0, 0) grid (12, 12);
        \draw[dotted] (0, 0) -- (12, 12);
        \drawPath%
        {1,1,1,0,1,1,1,1,1,1,1,0,0,0,0,0,0,0,1,1,0,0,0,0} 
        {0}{black}{(0,0)}{2}
        \drawArea{5,4,10,9,8,7,6,5,4,4,3,2,1}{4}{lightgrey}{0}{0}
        \drawLabel{c}{5,4,10,9,8,7,6,5,4,4,3,2,1}
        {\coc}{}
        \draw[<->, rounded corners=1, line width=2pt, dashed] (1, 6.5) -- (7.5,6.5) -- (7.5,10);
        \draw[<->, rounded corners=1, line width=2pt, dashed] (0,-0.5) -- (0.5,-0.5) -- (0.5,3);
      \end{tikzpicture}
    }
    \caption{\label{fig:gdtwoexample}The Dyck path in \Cref{ex:globaldim2} with rectangles at coordinates $(2,4)$ and coordinates $(9,11)$.}
  \end{figure}
  The $13$-LNakayama algebra with Kupisch series
  \[
    [5,4,10,9,8,7,6,5,4,4,3,2,1]
  \]
  has global dimension~$2$ and its simple modules $S_i$ have projective dimension~$2$ exactly for indices $i \in \{1,8\}$, where we compute
  \[
  c_2+1-c_1 = 7 = c_5 = c_{1+c_1}, \quad c_9+1-c_8 = 1 = c_{12} = c_{8+c_8}.
  \]
  The corresponding Dyck path is shown in \Cref{fig:gdtwoexample}, and is sent to the set $\{2,5,9,12\}$.
  To see how to recover the path from this set $\{j_1=2,j_2=5,j_3=9,j_4=12\}$, observe that we obtain that $c_{i+1} + 1 = c_i$ for all~$i$ except
  \[
    i \in \{j_1-1,j_3-1\} = \{ 1,8\},
  \]
  and
  \[
    c_1 = j_2-(j_1-1) = 4, \quad c_8 = j_4-(j_3-1) = 4.
  \]
  This in turn uniquely determines the Kupisch series as given.
\end{example}

Combining \Cref{regchara}\eqref{eq:2regchara} with \Cref{gldim2chara}, we thus obtain the following description of $2$-regular simple modules of Nakayama algebras of global dimension~$2$.

\begin{corollary} \label{gldim2corollary}
  Let~$A$ be an~$n$-Nakayama algebra of global dimension $2$.
  \begin{enumerate}
  \item if~$A$ is a CNakayama algebra, $S_i$ is $2$-regular if and only if $c_i=2$.
  \item if~$A$ is an LNakayama algebra, $S_{n-2}$ and $S_{n-1}$ are never $2$-regular, and $S_i$ is $2$-regular for $i < n-2$ if and only if $c_i=2$.
  \end{enumerate}
\end{corollary}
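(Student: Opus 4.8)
The plan is to derive the statement from the Kupisch-series description of $2$-regularity in \Cref{regchara}\eqref{eq:2regchara}, combined with the consequence of \Cref{smallprojdim} underlying \Cref{gldim2chara} that in global dimension~$2$ every non-projective simple module has projective dimension exactly $1$ or $2$. One implication is free: if $S_i$ is $2$-regular then $c_i=2$ by \Cref{regchara}\eqref{eq:2regchara}. For the LNakayama boundary, $S_{n-1}$ is simple projective and so never $k$-regular, while $c_{n-2}=2$ together with $c_{n-1}=1$ forces $\pd(S_{n-2})=1$ via \Cref{smallprojdim}\eqref{it:pd1}; hence $S_{n-2}$ is not $2$-regular. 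It remains to prove the converse for the non-boundary indices, that is, to show that $c_i=2$ (with $i<n-2$ in the LNakayama case, any $i$ cyclically in the CNakayama case) implies the three remaining conditions $c_{i+1}-c_{i+2}=1$, $d_{i+2}=2$ and $d_{i+1}-d_i=1$ of \Cref{regchara}\eqref{eq:2regchara}.

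Two of these are routine. Since $c_{i+1}\geq 2$ throughout (automatic for CNakayama, and for LNakayama because $i+1<n-1$), the value $c_i=2$ excludes $\pd(S_i)=1$, so $\pd(S_i)=2$ and \Cref{smallprojdim}\eqref{it:pd2} yields $c_{i+1}+1=c_{i+2}+c_i=c_{i+2}+2$, i.e. $c_{i+1}-c_{i+2}=1$. For $d_{i+2}=2$ I would read off \eqref{eq:KupischCoKupisch}: the choice $k=1$ fails because $c_{i+1}\geq 2$, and $k=2$ succeeds because $c_i=2$; equivalently, $c_i=2$ means the associated (possibly periodic) Dyck path has a return at position~$i+1$, which by \Cref{def:Dyck-statistics} is precisely $d_{i+2}=2$.

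The remaining identity $d_{i+1}-d_i=1$ is the crux and the step I expect to be the main obstacle, because---unlike the other two---it is not forced locally at~$i$ but genuinely uses the global hypothesis that every valley of the path is a rectangle. Unwinding \eqref{eq:KupischCoKupisch}, the claim $d_{i+1}=d_i+1$ is equivalent to the strengthened inequalities $c_j\geq i-j+2$ for every $j$ with $i-d_i+1\leq j\leq i-1$, the weaker bound $c_j\geq i-j+1$ already being built into the definition of $d_i$. The plan is to assume some such inequality fails and to let $j_0$ be the \emph{largest} index in this range with $c_{j_0}=i-j_0+1=:p$. Then $c_j\geq i-j+2$ for $j_0<j\leq i$, and in particular $c_{j_0+1}\geq p$, so \Cref{smallprojdim}\eqref{it:pd1} rules out $\pd(S_{j_0})=1$; global dimension~$2$ then gives $\pd(S_{j_0})=2$, whence by \Cref{smallprojdim}\eqref{it:pd2} the valley at $(j_0+1,i)$---note that $j_0+c_{j_0}-1=i$---is a rectangle. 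A rectangle requires the next valley to have $x$-coordinate strictly larger than $i+1$. But $c_i=2$ produces a valley (the return) at $(i+1,i+1)$, so the first valley with $x$-coordinate exceeding $j_0+1$ occurs at $x\leq i+1$. This contradiction shows that no violating $j_0$ exists, and hence $d_{i+1}=d_i+1$; equivalently, by \Cref{lemmahomandext1}(1), $\Hom_A(S_i,A)=0$.

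Having verified all four conditions of \Cref{regchara}\eqref{eq:2regchara}, I conclude that $S_i$ is $2$-regular, which finishes the converse. The entire argument is purely a recursion on the (cyclic) area and coarea sequences, so it applies verbatim to the LNakayama and CNakayama cases once indices in the latter are read modulo~$n$; the potentially degenerate configurations such as $c_{i-1}=2$ are absorbed automatically by the maximality in the choice of $j_0$.
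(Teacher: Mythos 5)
Your proposal is correct and follows essentially the same route as the paper's proof: both reduce to the four conditions of \Cref{regchara}\eqref{eq:2regchara}, obtain $d_{i+2}=2$ from the return at $i+1$ and $c_{i+1}-c_{i+2}=1$ from $\pd(S_i)=2$, and establish $d_{i+1}=d_i+1$ by contradiction, using that a valley with $y$-coordinate~$i$ would be forced by global dimension~$2$ to be a rectangle, which is impossible because of the valley at $(i+1,i+1)$. Your treatment is somewhat more explicit (unwinding \Cref{eq:KupischCoKupisch} via a maximal violating index $j_0$ and spelling out the LNakayama boundary cases $S_{n-2}$, $S_{n-1}$), but the underlying argument is identical.
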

\begin{proof}
  Suppose that $c_i=2$.  It follows that the (possibly periodic) Dyck path $D$ corresponding to~$A$ has a valley at $(i+1,i+1)$ and in particular $d_{i+2}=2$.  By \Cref{gldim2chara}, $c_{i+1}=1$ or $c_{i+1} = c_{i+2} + 1$.  In the former case,~$A$ is an LNakayama algebra and $i=n-1$.  It remains to show that in the latter case, we also have $d_{i+1} = d_i + 1$, as required by \Cref{regchara}\eqref{eq:2regchara}.  Suppose on the contrary that $d_{i+1} \leq d_i$, so that $D$ has a valley at $(j+1, i)$ for some $j$.  This valley cannot belong to a rectangle with $x$-coordinate $j+1$, because then the next valley should have $y$-coordinate strictly larger than $i+1$.  Thus, by \Cref{gldim2chara}, $c_{j+1} + 1 = c_j$, contradicting the assumption that there is a valley at $(j+1, i)$.
\end{proof}

Observe that the conclusion in the previous corollary does not hold in general for Nakayama algebras of higher global dimension as the next example shows.
\begin{example}
  Let~$A$ be the LNakayama algebra with Kupisch series $[2,2,2,1]$.
  Then~$A$ has global dimension $3$ and does not have any simple $2$-regular modules.
  The LNakayama algebra with Kupisch series $[2,3,2,2,2,1]$ also has global dimension $3$, and $S_0$ is its only simple $2$-regular module.
\end{example}

Let us now use our preceding results to classify LNakayama algebras of global dimension at most two that satisfy the restricted Gorenstein condition.  An algebra~$A$ with finite global dimension~$k \geq 0$ satisfies the \Dfn{restricted Gorenstein condition} if $k=0$ or if every simple left and right module with projective dimension~$k$ is~$k$-regular, see for example~\cite[Definition~1.3]{Iy}.

We say that a Dyck path is a \Dfn{bounce path} if it is of the form $h^{a_1}v^{a_1} \dots h^{a_\ell}v^{a_\ell}$ for positive integers $a_1,\dots,a_\ell$.
Observe that this is the case if and only if all its valleys are of the form $(i,i)$.
Moreover, an LNakayama algebra has a associated Dyck path that is a bounce path if and only if its Kupisch series is of the form
\[
  [a_1+1,\dots,2,\ a_2+1,\dots,2,\ \dots,\ a_\ell+1,\dots,2,1].
\]

\begin{theorem}
\label{thm:resGor}
  Let~$A$ be an~$n$-Nakayama algebra and let~$D$ be the associated Dyck path.
  Then~$A$ has global dimension at most~$2$ and satisfies the restricted Gorenstein condition if and only if~$D$ is a bounce path and has no $1$-hill after position~$0$ and before position $n-1$.

  Similarly, let~$A$ be an~$n$-CNakayama algebra.  Then~$A$ has global dimension at most~$2$ and satisfies the restricted Gorenstein condition if and only if~$D$ is a bounce path and has no $1$-hills.
\end{theorem}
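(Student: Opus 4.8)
The plan is to reduce both statements to the combinatorics of the valleys of the associated (possibly periodic) Dyck path, using \Cref{gldim2chara} to control the global dimension and \Cref{gldim2corollary} to control $2$-regularity. The organizing principle is that the restricted Gorenstein condition forces every valley onto the main diagonal, which is exactly what makes $D$ a bounce path, while the requirement $\gd(A)\le 2$ imposes a local condition on the block sizes, which is exactly what rules out $1$-hills.

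First I would dispose of the low-dimensional cases. If $\gd(A)\le 1$, then by \Cref{gldim2chara} either $A$ is the algebra $[1]$ (the empty path) or $A$ is the staircase $[n,\dots,1]$, whose Dyck path is the single-bounce path $h^{n-1}v^{n-1}$; in both cases the path is a bounce path with no interior $1$-hill, and the restricted Gorenstein condition holds --- in the $\gd=1$ case because the coKupisch series of the staircase is again a staircase, so every non-projective simple is $1$-regular by \Cref{regchara}\eqref{it:1reg}. I would also record that a CNakayama algebra can never have global dimension $1$, since $c_{i+1}+1=c_i$ for all $i$ would sum to a contradiction around the cycle; hence in the cyclic case the hypothesis $\gd(A)\le 2$ already forces $\gd(A)=2$, which explains why no boundary blocks occur and no $1$-hills are permitted there.

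For the main case $\gd(A)=2$ I would argue both implications through the following dictionary. By \Cref{gldim2chara}, $\gd(A)\le 2$ is equivalent to every valley of $D$ being a rectangle, and by \Cref{smallprojdim} these rectangles are exactly the positions of the simple modules of projective dimension~$2$ (in the linear case $S_{n-2}$ has projective dimension~$1$ and $S_{n-1}$ is projective, so they do not interfere). By \Cref{gldim2corollary}, such a simple $S_i$ is $2$-regular precisely when $c_i=2$, i.e.\ precisely when its valley lies on the diagonal. Therefore the condition on right simples in the restricted Gorenstein property holds if and only if every valley of $D$ lies on the diagonal, which is exactly the assertion that $D$ is a bounce path. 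It then remains to show that, among bounce paths, $\gd(A)=2$ (all valleys rectangles) is equivalent to forbidding interior $1$-hills (linear), respectively all $1$-hills (cyclic). This is the crucial local computation: writing $D=h^{a_1}v^{a_1}\cdots h^{a_\ell}v^{a_\ell}$ with block $j$ starting at index $s_j$, the diagonal valley at the start of block $j$ is a rectangle if and only if $c_{s_j}-c_{s_j+1}=1$; since $c_{s_j}=a_j+1$, this holds automatically when $a_j\ge 2$ and fails when $a_j=1$ unless the entry following block $j$ equals~$1$. That exception occurs only for the final block of a linear path, which is why the forbidden hills are exactly those ``after position $0$ and before position $n-1$'', and it never occurs cyclically.

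Finally I would account for the left simples, which I expect to be the only genuine subtlety. The restricted Gorenstein condition is imposed on both left and right modules, whereas \Cref{regchara,gldim2corollary} concern right modules; the left condition for $A$ is the right condition for $A^{op}$. Using the identification from \Cref{sec:prelim} that $A\mapsto A^{op}$ replaces the Kupisch series by the reversed coKupisch series --- equivalently, replaces $D$ by its reverse-complement --- I would observe that the reverse-complement of $h^{a_1}v^{a_1}\cdots h^{a_\ell}v^{a_\ell}$ is $h^{a_\ell}v^{a_\ell}\cdots h^{a_1}v^{a_1}$, so $A^{op}$ is again a bounce path with the blocks reversed. Since reversal preserves both ``bounce'' and ``no $1$-hill in the interior / anywhere'', applying the right-module analysis to $A^{op}$ yields the left-module half of the statement for free, and in particular the left condition is automatic once the right condition and $\gd(A)\le 2$ hold. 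Assembling these pieces proves both directions for linear and cyclic Nakayama algebras at once, the main obstacle being the careful bookkeeping that distinguishes the boundary blocks present in the linear case from their absence in the cyclic case.
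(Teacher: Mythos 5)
Your proof is correct and follows essentially the same route as the paper's: reduce via \Cref{gldim2chara} to ``all valleys are rectangles'', observe that the $2$-regularity criterion $c_i=2$ forces every valley onto the diagonal (so $D$ is a bounce path whose rectangles are exactly returns), verify that for bounce paths the rectangle condition fails precisely at interior $1$-hills, and dispatch the left-module half by passing to the opposite algebra and the reversed path. Your write-up is somewhat more detailed than the paper's --- notably the explicit block-by-block computation $c_{s_j}-c_{s_j+1}=1$ and the separate treatment of global dimension at most~$1$ --- but the underlying argument is the same.
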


\begin{proof}
  Suppose that $D$ is a bounce path without $1$-hills after position $0$ and before position $n-1$.  Then all valleys of $D$ belong to rectangles, so by \Cref{gldim2chara}, the global dimension of~$A$ is at most $2$.  Since all rectangles of $D$ are returns, the corresponding simple modules are all $2$-regular by \Cref{thm:Dyck-regular}\eqref{it:Dyck-2-regular}.

  Conversely, suppose that~$A$ has global dimension at most $2$, but $D$ has a rectangle with $x$-coordinate $i+1$ which is not a return.  Then $c_i>2$, so $S_i$ is not $2$-regular.

  To conclude that~$A$ satisfies the restricted Gorenstein condition, it remains to recall that the simple left modules of~$A$ are the simple right modules of the opposite algebra, corresponding to the reversed Dyck path.  The above reasoning applies verbatim.
\end{proof}

\begin{corollary}
\label{cor:resGor}
  The number of $(n+1)$-LNakayama algebras of global dimension at most~$2$ that satisfy the restricted Gorenstein condition equals the Fibonacci number\footnote{\OEIS{A000045}} $F(n+1)$, counting subsets of $\{1,2,\dots,n-1\}$ that contain no consecutive integers.
  Explicitly, this number is given by the recurrence
  \[
    F(n+2) = F(n)+F(n+1)
  \]
  with initial conditions $F(1)=F(2)=1$.
\end{corollary}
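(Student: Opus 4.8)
The plan is to reduce the statement to a combinatorial count via \Cref{thm:resGor} and then to produce a bijection with subsets avoiding consecutive integers. By \Cref{prop:LNak2Dyck}, the $(n+1)$-LNakayama algebras correspond to Dyck paths~$D$ of semilength~$n$, and by \Cref{thm:resGor} those of global dimension at most~$2$ satisfying the restricted Gorenstein condition are exactly the bounce paths $D = h^{a_1}v^{a_1}\cdots h^{a_\ell}v^{a_\ell}$ having no $1$-hill at any position~$i$ with $0 < i < n-1$. So it suffices to count such bounce paths.

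The key step is to encode a bounce path of semilength~$n$ by the set~$S$ of interior points at which it touches the main diagonal, i.e.\ by the partial sums $S = \{a_1,\ a_1+a_2,\ \dots,\ a_1+\cdots+a_{\ell-1}\} \subseteq \{1,\dots,n-1\}$. Reading off the gaps between consecutive elements of $\{0\}\cup S\cup\{n\}$ recovers the composition $(a_1,\dots,a_\ell)$, so this is a bijection between bounce paths of semilength~$n$ and \emph{arbitrary} subsets of $\{1,\dots,n-1\}$. Under this encoding a $1$-hill at position~$i$ occurs precisely when $i\in\{0\}\cup S$ and the next touch-point after~$i$ is $i+1$, that is, when $i+1\in S\cup\{n\}$.

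It then remains to match the $1$-hill restriction with the no-consecutive condition, which is the only real subtlety since the boundary cases behave differently. For a forbidden position~$i$ with $1\le i\le n-2$ one has $i\ge 1$ and $i+1\le n-1$, so the two membership conditions become simply $i\in S$ and $i+1\in S$; hence a forbidden $1$-hill exists if and only if~$S$ contains two consecutive integers. The allowed $1$-hills at positions~$0$ and~$n-1$ correspond to $a_1=1$ and $a_\ell=1$ and impose no condition on~$S$, because they involve the excluded touch-points~$0$ and~$n$. Thus the bijection restricts to one between the algebras in question and subsets of $\{1,\dots,n-1\}$ containing no two consecutive integers.

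Finally I would count these subsets. Writing $g(m)$ for the number of subsets of $\{1,\dots,m\}$ with no two consecutive elements, conditioning on whether~$m$ lies in the subset gives $g(m)=g(m-1)+g(m-2)$ with $g(0)=1$ and $g(1)=2$, so that $g(m)=F(m+2)$ for the Fibonacci numbers normalized by $F(1)=F(2)=1$. Taking $m=n-1$ yields $F(n+1)$, as claimed, and the stated recurrence with its initial conditions is immediate. No step here is genuinely hard; the point to handle with care is the asymmetric treatment of positions~$0$ and~$n-1$, which is exactly what makes the forbidden consecutive pairs land in $\{1,\dots,n-1\}$ with no further boundary constraints.
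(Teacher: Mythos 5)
Your proposal is correct and follows essentially the same route as the paper: identifying a bounce path with the set of its interior diagonal touch-points (equivalently, its valley positions) in $\{1,\dots,n-1\}$, observing that the forbidden $1$-hills at positions $1,\dots,n-2$ correspond exactly to consecutive pairs in that set, and counting such subsets by the standard Fibonacci recurrence. Your write-up merely makes explicit the boundary analysis at positions $0$ and $n-1$ that the paper leaves implicit.
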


\begin{proof}
  A bounce path of semilength~$n$ can be identified with the subset of $\{1,\dots,n-1\}$ given by the positions of its valleys.  Under this identification, a $1$-hill at a position between $1$ and $n-2$ corresponds to two consecutive numbers in the given subset, which implies the claim.
\end{proof}

The analogous result for CNakayama algebras is as follows.
\begin{corollary}
\label{cor:resGorCNakayama}
  The number of~$n$-CNakayama algebras of global dimension~$2$ that satisfy the restricted Gorenstein condition equals the number of cyclic compositions of~$n$ into parts of size at least $2$\footnote{\OEIS{A032190}}.  Explicitly, this number is
  \[
    \frac{1}{n}\sum_{k\divides n} \phi(n/k)\big(F(k-1)+F(k+1)\big)-1
  \]
  where $F$ is the Fibonacci number defined above.
\end{corollary}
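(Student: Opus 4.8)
The plan is to translate Theorem~\ref{thm:resGor} into a necklace-counting problem and then evaluate it with the Cauchy--Frobenius (Burnside) lemma, which is an instance of the cycle construction used elsewhere in the paper. By the CNakayama part of Theorem~\ref{thm:resGor}, an $n$-CNakayama algebra~$A$ has global dimension at most~$2$ and satisfies the restricted Gorenstein condition if and only if its associated $n$-periodic Dyck path is a bounce path without $1$-hills. Every such path has at least one return to the diagonal (each block contributes one) and each return gives a simple module of projective dimension~$2$, so these algebras in fact have global dimension exactly~$2$; thus Theorem~\ref{thm:resGor} describes precisely the algebras in the statement. Such a path is determined by the set $M\subseteq\ZZ/n$ of its return positions: the cyclic gaps between consecutive elements of~$M$ are exactly the block sizes, and ``no $1$-hills'' means every gap is at least~$2$, i.e.\ no two elements of~$M$ are cyclically adjacent. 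Hence~$M$ is a non-empty independent set of the cycle graph~$C_n$. Since the Kupisch series of a CNakayama algebra is only defined up to cyclic rotation of the~$n$ vertices, two such sets describe the same algebra precisely when they differ by a rotation of~$\ZZ/n$; recording the cyclic sequence of gaps identifies these rotation classes with cyclic compositions of~$n$ into parts of size at least~$2$, which proves the first assertion.

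It then remains to count rotation classes of non-empty independent sets of~$C_n$. First I would record that the number of independent sets of~$C_n$ (with the empty set included) is $F(n-1)+F(n+1)$: the number of independent sets of a path on~$m$ vertices is $F(m+2)$, and conditioning on whether one fixed vertex of~$C_n$ lies in the set splits the count into the $F(n+1)$ sets avoiding it and the $F(n-1)$ sets containing it (with its two neighbours then excluded). Applying Burnside's lemma to the rotation action of~$\ZZ/n$, a configuration is fixed by $t\mapsto t+j$ exactly when it is periodic with period $d=\gcd(j,n)$, in which case it is governed by an independent set of the quotient cycle~$C_d$; hence the rotation by~$j$ fixes $F(d-1)+F(d+1)$ configurations. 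Grouping the~$n$ rotations according to the value $d=\gcd(j,n)$, of which there are $\phi(n/d)$ for each divisor $d\divides n$, yields
\[
  \frac{1}{n}\sum_{k\divides n}\phi(n/k)\big(F(k-1)+F(k+1)\big)
\]
for the number of \emph{all} independent sets of~$C_n$ up to rotation.

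Finally, the empty independent set forms a single rotation-fixed orbit, contributing $\tfrac1n\sum_{k\divides n}\phi(n/k)=1$ to this total (using $\sum_{d\divides n}\phi(d)=n$); as it has no parts it is not a composition of $n\geq 1$, so subtracting it gives exactly the claimed formula, and the match with the cited integer sequence follows from the combinatorial bijection of the first paragraph. The step requiring the most care is verifying that $F(k-1)+F(k+1)$ really is the number of fixed configurations for the degenerate divisors $k=1$ and $k=2$, where the quotient ``cycle'' carries a loop or a doubled edge: the only configuration these cases could spuriously contribute is the constant all-marked string, and it is correctly excluded because its marks are cyclically adjacent. Once this is checked, the remainder is a routine Burnside computation.
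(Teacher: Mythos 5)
Your argument is correct and takes essentially the route the paper intends: the paper states this corollary without a printed proof as ``the analogous result'' to \Cref{cor:resGor}, whose proof identifies bounce paths with the sets of their return positions and $1$-hills with cyclically adjacent elements, and your Burnside count of non-empty independent sets of the cycle up to rotation is exactly the cycle construction the paper uses for its other cyclic enumerations (with $\log\frac{1-x}{1-x-x^2}$ producing the Lucas numbers $F(k-1)+F(k+1)$). Your checks of the degenerate divisors $k=1,2$ and of the equivalence of ``global dimension $2$'' with ``global dimension at most $2$'' for CNakayama algebras are both correct and welcome.
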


We remark that by \cite[Proposition~1.4]{Iy}, the class of Nakayama algebras with global dimension at most $2$ satisfying the restricted Gorenstein condition coincides with the class of Nakayama algebras of global dimension at most $2$ that are $2$-Gorenstein. For the general enumeration of $2$-Gorenstein LNakayama algebras we refer to the recent article \cite{RS2017}.

\bibliographystyle{amsalpha}

\end{document}